\documentclass[11pt,reqno,a4paper]{amsart}
\addtolength{\oddsidemargin}{-.775in}
	\addtolength{\evensidemargin}{-.775in}
	\addtolength{\textwidth}{1.55in}
	\usepackage{amsmath,amssymb,amsthm,framed,cancel,tikz,hyperref}

\newtheorem{theorem}{Theorem}[section]
\newtheorem{lemma}[theorem]{Lemma}

\newtheorem{proposition}[theorem]{Proposition}
\newtheorem{corollary}[theorem]{Corollary}

\theoremstyle{definition}
\newtheorem{definition}[theorem]{Definition}

\newtheorem*{axiom}{Axiom}
\theoremstyle{remark} 
\theoremstyle{remark} 
\theoremstyle{remark}

\theoremstyle{remark}
\newtheorem{remark}[theorem]{Remark}

\numberwithin{equation}{section}



\begin{document}
\title[Few operators on Banach spaces $C_0(L\times L)$]{Few operators on Banach spaces $C_0(L\times L)$}

\author{Leandro Candido}
\address{Universidade Federal de S\~ao Paulo - UNIFESP. Instituto de Ci\^encia e Tecnologia. Departamento de Matem\'atica. S\~ao Jos\'e dos Campos - SP, Brasil}
\email{\texttt{leandro.candido@unifesp.br}}
\thanks{ The author was supported by Funda\c c\~ao de Amparo \`a Pesquisa do Estado de S\~ao Paulo - FAPESP No. 2023/12916-1 }

\subjclass{Primary 46E15, 54G12; Secondary 46B25, 03E65, 54B10}


\keywords{Operator theory, $C(K)$ spaces, Tensor products, Ostaszewski's $\clubsuit$-principle}

\begin{abstract}
Using Ostaszewski's $\clubsuit$-principle, we construct a non-metrizable, locally compact, scattered space $L$ in which the operators on the Banach space $C_0(L \times L)$ exhibit a remarkably simple structure. We provide a detailed analysis and, through a series of decomposition steps, offer an explicit characterization of all operators on $C_0(L \times L)$.
\end{abstract}

\maketitle

\section{Introduction}

A well-known problem in the theory of Banach spaces is the characterization of operators that must necessarily exist on a Banach space $X$. This question, likely originating from the early days of functional analysis, has inspired numerous research directions over the decades; see, for example, \cite{Lindenstrauss}, \cite{Shelah}, \cite{GowersMaurey}, \cite{Mor}, \cite{Koz3}, \cite{Pleb}, \cite{Argyros2011}. It remains a topic of significant interest. This is particularly true for Banach spaces of continuous functions on a compact space $C(K)$, as thoroughly discussed in the comprehensive survey \cite{Koz2}, and is especially relevant for $C(K)$ spaces where $K$ is a scattered Hausdorff compact space. Whenever $K$ is an infinite scattered compactum, $C(K)$ contains a complemented subspace isomorphic to $c_0$, which gives rise to many operators. However, there are examples of such spaces where all operators take the form $T = pI + S$, where $p$ is a real scalar, $I$ denotes the identity operator, and $S$ is an operator with separable range; see \cite{Koz1}, \cite{Koz4}, \cite{Koz5}.

Our goal in this paper is to explore the aforementioned question in the context of Banach spaces of the form $C(K \times K)$, and more specifically, spaces of the form $C_0(L \times L)$, where $L$ denotes a locally compact space. On these spaces, various types of operators naturally arise, in addition to the identity operator $I$ and operators with separable range. For instance, the operator $J: C_0(L \times L) \to C_0(L \times L)$, which transposes the coordinates of a function $f$, i.e., $J(f)(x, y) = f(y, x)$ for all $x, y \in L$. Another class of operators are those we refer to as \emph{diagonal multiplications} (see Definition \ref{Def:DiagonalMulti}). Specifically, given functions $g, h \in C_0(L)$, these operators are defined by the formulas $M_g(f)(x, y) = g(y)f(x, x)$ and $N_h(f)(x, y) = h(x)f(y, y)$. Additionally, recalling that $C_0(L \times L)$ is linearly isometric to the injective tensor product of two copies of $C_0(L)$, i.e., $C_0(L) \hat{\otimes}_{\varepsilon} C_0(L)$, operators induced by the tensor product of operators on $C_0(L)$ naturally arise, such as those we refer to as \emph{matrix operators} (see Definition \ref{Def:Matrizes}). Let $R_1, R_2, R_3, R_4 : C_0(L) \to C_0(L)$ be operators with separable range. We denote the corresponding matrix operator by
\[
\left( \begin{smallmatrix}
 R_1 & R_2 \\
 R_3 & R_4
\end{smallmatrix} \right) = R_1 \otimes I + J \circ (R_2 \otimes I) + J \circ (I \otimes R_3) + I \otimes R_4.
\]

Using Ostaszewski's $\clubsuit$ Principle \cite{Osta} and employing techniques similar to those in \cite{candido2} and \cite{CK}, we construct an example of a locally compact, scattered space $L$ that possesses what we refer to as the \emph{collapsing property} (see Definition \ref{Def:Collapsing}). This property leads to the failure of continuity for many linear mappings, enabling us to classify all operators in the Banach space $C_0(L \times L)$. More specifically, the main result of this paper is stated as follows:

\begin{theorem}\label{main1}
Assuming Ostaszewski's $\clubsuit$ principle, there exists a non-metrizable scattered locally compact Hausdorff space $L$ of height $\omega$ such that, for every operator $T: C_0(L \times L) \to C_0(L \times L)$, there exist unique scalars $p,q \in \mathbb{R}$, unique functions $g, h \in C_0(L)$, unique separable range operators $R_1, R_2, R_3, R_4: C_0(L) \to C_0(L)$, and an unique separable range operator $S: C_0(L \times L) \to C_0(L \times L)$ such that 
\[ T = pI + qJ + M_g + N_h + \begin{pmatrix} R_1 & R_3 \\ R_2 & R_4 \end{pmatrix} + S. \]
\end{theorem}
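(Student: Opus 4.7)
The plan is to prove Theorem \ref{main1} in two phases: first, construct $L$ by a $\clubsuit$-guided transfinite recursion of length $\omega_1$, then use the resulting collapsing property to analyze an arbitrary operator $T$ on $C_0(L \times L)$.

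For the construction, following the pattern of \cite{candido2} and \cite{CK}, I would build $L$ as a scattered locally compact Hausdorff space stratified by its Cantor--Bendixson levels $L^{(n)}$ for $n < \omega$. At each countable stage, new points are introduced at the appropriate level, and the $\clubsuit$-sequence $\{A_\alpha\}_{\alpha < \omega_1}$ is used to prescribe which sequences of earlier isolated points accumulate to each newly added non-isolated point. The role of $\clubsuit$ is to anticipate, via its uncountably many guesses $A_\alpha \subseteq \alpha$, every countable configuration in $L \times L$ that could potentially witness non-continuity of a putative operator $T$ outside the claimed form. The outcome is a non-metrizable $L$ of height $\omega$ enjoying the collapsing property of Definition \ref{Def:Collapsing}.

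With $L$ in hand, I would analyze a fixed $T$ through its adjoint acting on point masses. For $(x,y) \in L \times L$ with $x \neq y$, the Radon measure $T^{*} \delta_{(x,y)}$ on $L \times L$ should split into contributions on five geometric supports: atoms at $(x,y)$ and $(y,x)$, which after subtraction yield the scalars $p$ and $q$ for $I$ and $J$; atoms at $(x,x)$ and $(y,y)$ with $y$- and $x$-dependent coefficients, yielding $g, h \in C_0(L)$ for $M_g$ and $N_h$; masses on the four fibres $\{x\} \times L$, $L \times \{y\}$, $\{y\} \times L$, and $L \times \{x\}$ (minus the atoms already extracted), which under the identification $C_0(L \times L) \cong C_0(L) \hat{\otimes}_{\varepsilon} C_0(L)$ correspond to four operators $R_1, \dots, R_4 : C_0(L) \to C_0(L)$ and assemble into the matrix term; and a remainder supported away from all of these that must give an operator of separable range, namely $S$. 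The collapsing property is precisely what forces each $R_i$ to have separable range and rules out any additional non-separable behavior: any deviation from these five channels would be anticipated by some $A_\alpha$ and exploited to produce a test function witnessing a discontinuity of $T$, a contradiction.

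Uniqueness is proved by testing a hypothetical null-combination $pI + qJ + M_g + N_h + \bigl(\begin{smallmatrix} R_1 & R_3 \\ R_2 & R_4 \end{smallmatrix}\bigr) + S = 0$ against functions concentrated near various diagonal and off-diagonal configurations; because the five non-separable channels live on geometrically disjoint supports modulo separable-range perturbations, each coefficient can be read off independently. The principal obstacle I anticipate is the construction of $L$: the $\clubsuit$-sequence must simultaneously diagonalize against five qualitatively distinct families of forbidden non-separable behavior, and showing that the collapsing property blocks every one of them requires careful control of how $T^{*} \delta_{(x,y)}$ varies with $(x,y)$, especially in separating the $M_g$/$N_h$ channels from the four matrix channels, both of which are parameterized by separable-range operators on $C_0(L)$.
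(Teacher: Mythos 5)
Your proposal follows essentially the same route as the paper: a $\clubsuit$-guided transfinite construction (with the $\clubsuit$-sequence recoded to guess separated families of tuples) produces the collapsing space $L$, and the operator is then handled exactly by the paper's channel-by-channel analysis of $T^*(\delta_{(x,y)})$ --- atoms at $(x,y)$ and $(y,x)$ yielding $pI+qJ$, diagonal atoms yielding $M_g+N_h$, fibre masses yielding the matrix term, and a separable remainder --- which is the three-stage reduction of Theorems \ref{ReducaoOperador} and \ref{ReducaoOperadorL2} and Lemma \ref{Lem:AuxiliarDecomposition}, with uniqueness read off at isolated points together with Proposition \ref{oper8}. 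The one cosmetic slippage is that the construction does not diagonalize against five kinds of operator behavior directly: it establishes the single combinatorial collapsing property, and the difficulty you rightly flag (uniformity of the atom values and separability of the remainder) is discharged afterwards by the vanishing theorems (Theorems \ref{Thm:vanish-A}, \ref{Thm:vanish-B}, \ref{Thm:vanish-C}), which require collapsing separated families in $L^m$ for arbitrarily large $m$, not merely configurations in $L\times L$.
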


The paper is organized as follows. In Section \ref{Sec-Prelim}, we establish the basic concepts and terminology used throughout this work. Section \ref{Sec-Ostaszewski} introduces the \emph{collapsing property} and constructs an example of a locally compact scattered topological space that satisfies this property (Theorem \ref{compactclub}). In Section \ref{Sec-Auxiliary}, we examine the implications of the collapsing property on certain functions associated with operators on Banach spaces $C_0(L^n)$, which are crucial for proving our main result. Section \ref{Sec-Operators} details the decomposition of operators on $C_0(L \times L)$ through several steps, culminating in Theorem \ref{main1}. Finally, Section \ref{Sec-Geo} explores the geometry of the spaces $C_0(L \times L)$ as a consequence of the decomposition discussed in the previous section.


\section{Basic Terminology}
\label{Sec-Prelim}

In this section, we establish some notation and basic facts that will be used repeatedly throughout the paper. We begin with ordinal numbers, which will always be considered with their usual order topology. As usual, $\omega$ denotes the first infinite ordinal and $\omega_1$ the first uncountable ordinal.

In this research, unless explicitly stated otherwise, $L$ will always denote a locally compact space for which there exists a finite-to-one continuous surjection $\varphi: L \to \omega_1$. This means a continuous surjective function from $L$ onto $\omega_1$ such that $\varphi^{-1}[\{\alpha\}]$ is finite for every $\alpha < \omega_1$. As in \cite{candido2}, we denote the class of all such spaces by $\mathcal{S}$. For an element $L$ of $\mathcal{S}$, $K = L \cup \{\infty\}$ stands for the Aleksandrov one-point compactification of $L$. We will always consider a member $L$ of $\mathcal{S}$ to be equipped with a finite-to-one function $\varphi:L\to \omega_1$ along with the collection $\{L_\alpha:\alpha<\omega_1\}$, where $L_\alpha=\varphi^{-1}[\alpha]=\{x\in L:\varphi(x)\in \alpha\}$ for each $\alpha<\omega_1$. It is important to note that $\{L_{\alpha}:\alpha<\omega_1\}$ forms a cover of $L$ consisting of countable clopen sets such that $L_{\xi}\subseteq L_{\rho}$ whenever $\xi<\rho$. We should also emphasize that every member of the class $\mathcal{S}$ is a first-countable scattered space \cite[Proposition 3.2]{candido2}, and there will be an ordinal number $\rho$ such that the $\rho$-Cantor-Bendixson derivative $L^{(\rho)}=\emptyset$. We call the least such ordinal the \emph{height} of $L$.

For $0 < n < \omega$, $K^n$ denotes the product of $n$ copies of $K$, equipped with the usual product topology. We define the generalized diagonal of $K^n$ as the set $\varDelta(K^n) = \{(x_1, \ldots, x_n) \in K^n : |\{x_1, \ldots, x_n\}| < n\}$, and the boundary of $K^n$ as the set $\partial(K^n) = \{(x_1, \ldots, x_n) \in K^n : \infty \in \{x_1, \ldots, x_n\}\}$. The support of a point $x=(x_1,\ldots,x_n)\in K^n$ is the set $\mathrm{supp}(x)=\{x_1,\ldots,x_n\}$. We say that points $x,y \in K^n$ are disjoint if $\mathrm{supp}(x)\cap \mathrm{supp}(y)=\emptyset$. We denote by $C(K^n)$ the Banach space of all continuous functions $f: K \to \mathbb{R}$, endowed with the norm $\|f\| = \sup_{x \in K} |f(x)|$. The symbol $C_0(L^n)$ denotes the space of all continuous functions $f: L^n \to \mathbb{R}$ that vanish at infinity. We identify $C_0(L^n)$ with the subspace of $C(K^n)$ consisting of all functions $f$ that vanish on $\partial(K^n)$. The topological dual $C(K^n)^*$ is isometrically identified with the space of all signed Radon measures of finite variation $M(K^n)$, endowed with the variation norm. Similarly, $C_0(L^n)^*$ is naturally identified with the subspace $M_0(K^n) = \{\mu \in M(K^n) : |\mu|(\partial(K^n)) = 0\}$.

Additionally, whenever $V$ denotes a clopen subset of $L^n$, we will identify the space $C_0(V)$ isometrically as a subspace of $C_0(L^n)$, consisting of all continuous functions that vanish outside $V$. In particular, for the collection $\{L_\alpha : \alpha < \omega_1\}$ mentioned above, $C_0(L_\rho)$ denotes the subspace of $C_0(L)$ consisting of functions that vanish outside $L_\rho$. It is straightforward to see that for any clopen subset $V$ of $L^n$, the following decomposition holds:
\[C_0(L^n) = C_0(L^n \setminus V) \oplus C_0(V).\]

In this research, the word \emph{operator} always refers to a bounded linear mapping. For all Banach spaces $X$ and $Y$, we write $X \sim Y$ if $X$ and $Y$ are linearly isomorphic, and $X \cong Y$ if they are isometrically isomorphic. All other standard terminology from Banach space theory and set-theoretic topology will be used as in \cite{Se}.


\section{Collapsing Spaces and the Ostaszewski's Principle}
\label{Sec-Ostaszewski}

The aim of this section is to construct an example of a locally compact space $L$ that is sufficiently exotic so that all operators $T: C_0(L^n) \to C_0(L^n)$ are as simple as possible. To achieve this, we will employ techniques developed in \cite{Koz4}, \cite{CK}, and \cite{candido2}. Following the approach of these works, we will utilize Ostaszewski's $\clubsuit$-principle \cite{Osta}. Denoting $\mathcal{L}(\omega_1)$ as the collection of all countable limit points of $\omega_1$, we recall that $\clubsuit$ consists of the following statement:

\begin{axiom}[$\clubsuit$]
There exists a collection $\{F_\alpha : \alpha \in \mathcal{L}(\omega_1)\}$ satisfying the following conditions:
\begin{itemize}
    \item[(1)] $F_\alpha \subset \alpha$ and $\sup F_\alpha = \alpha$ for each $\alpha \in \mathcal{L}(\omega_1)$.
    \item[(2)] For every uncountable subset $\mathcal{A} \subset \omega_1$, there exists $\alpha \in \mathcal{L}(\omega_1)$ such that $F_\alpha \subset \mathcal{A}$.
\end{itemize}
\end{axiom}

Drawing inspiration from \cite[p. 336]{Koz4}, we present a convenient equivalent version of the $\clubsuit$-principle. For an arbitrary set $L$, we define $\mathcal{P}_0(L) = L$. For each $n < \omega$, we let
\[\mathcal{P}_{n+1}(L) = \left[\bigcup_{k \leq n} \mathcal{P}_k(L)\right]^{<\omega}.\]

Then we define $\mathcal{P}(L)=\bigcup_{n<\omega}\mathcal{P}_n(L)$. The \emph{support} of an element $x\in \mathcal{P}_n(L)$ is defined by
\begin{displaymath}\mathrm{supp}(x)= \left\{
\begin{array}{ll}
\{x\} & \text{ if }n=0;\\
\bigcup_{y\in x}\mathrm{supp}(y) & \text{ if }n>0.
\end{array} \right.
\end{displaymath}

\begin{definition}\label{P(alpha)Convergence}
For every ordinal $\alpha$, we say that a subset $ S \subset \mathcal{P}(\alpha) $ is consecutive if, for every $ x $ and $ y \in S $, either $ \mathrm{supp}(x) < \mathrm{supp}(y) $ or $ \mathrm{supp}(y) < \mathrm{supp}(x) $ whenever $ x \neq y $. Moreover, we say that $ S $ converges to $ \alpha $ if $ S $ is consecutive and $\sup\{y : y \in \bigcup_{x \in S}\mathrm{supp}(x)\} = \alpha$.
\end{definition}

\begin{proposition}[$\clubsuit$]\label{EquivClubsuit}There is a collection $\{G_\alpha:\alpha\in \mathcal{L}(\omega_1)\}$ satisfying the following conditions.
\begin{itemize}
\item[(1)] $G_\alpha \subset \mathcal{P}(\alpha)$ and $G_\alpha$ converges to $\alpha$ for each $\alpha\in \mathcal{L}(\omega_1)$.  
\item[(2)] For every consecutive uncountable subset $\mathcal{A}\subset \mathcal{P}(\omega_1)$ there is $\alpha\in \mathcal{L}(\omega_1)$ such that $G_\alpha \subset \mathcal{A}$.
\end{itemize}
\end{proposition}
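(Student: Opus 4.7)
The strategy is to transfer $\clubsuit$ from $\omega_1$ to uncountable consecutive subsets of $\mathcal{P}(\omega_1)$ by bijecting $\omega_1$ with the singleton-supported elements of $\mathcal{P}(\omega_1)$. The key combinatorial lemma is that every consecutive uncountable $\mathcal{A}\subset\mathcal{P}(\omega_1)$ contains uncountably many elements of singleton support: enumerating $\mathcal{A}=\{a_\xi:\xi<\omega_1\}$ in support-order and setting $M=\{\max\mathrm{supp}(a_\xi):\xi<\omega_1\}$, the consecutiveness condition forces the unique $a\in\mathcal{A}$ with $\max\mathrm{supp}(a)=\beta$ to satisfy $\min\mathrm{supp}(a)\ge\sup(M\cap\beta)$ for each $\beta\in M$; whenever $\beta$ is an accumulation point of $M$ (i.e., $\beta=\sup(M\cap\beta)$), this squeeze yields $\mathrm{supp}(a)=\{\beta\}$, and the set of such $\beta$ is uncountable because the order-preserving enumeration $\{\mu_\xi:\xi<\omega_1\}$ of $M$ produces an accumulation point $\mu_\xi$ for every limit $\xi$.

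For each $\beta<\omega_1$, the countably infinite set $\tilde{\mathcal{P}}^\beta:=\{x\in\mathcal{P}(\omega_1):\mathrm{supp}(x)=\{\beta\}\}$ is enumerated as $\{\tilde y_n^\beta:n<\omega\}$, and I would define $\pi:\omega_1\to\mathcal{P}(\omega_1)$ by $\pi(\omega\beta+n)=\tilde y_n^\beta$, using the unique ordinal decomposition $\gamma=\omega\beta+n$ with $n<\omega$ for each positive $\gamma<\omega_1$. Given the $\clubsuit$-sequence $\{F_\alpha\}$, I would then define $G_{\tilde\alpha}$ for $\tilde\alpha\in\mathcal{L}(\omega_1)$ as the thinning of $\pi[F_{\omega\tilde\alpha}]$ that retains one $\tilde y_n^\beta$ per $\beta$-coordinate; this makes $G_{\tilde\alpha}$ a set of elements with pairwise distinct singleton supports $\{\beta\}\subset\tilde\alpha$, hence a consecutive subset of $\mathcal{P}(\tilde\alpha)$. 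Cofinality of $F_{\omega\tilde\alpha}$ in $\omega\tilde\alpha$ with $\tilde\alpha$ a limit ordinal forces the $\beta$-coordinates of $F_{\omega\tilde\alpha}$ to be cofinal in $\tilde\alpha$ (an upper bound $\beta^*<\tilde\alpha$ would bound $F_{\omega\tilde\alpha}$ by $\omega(\beta^*+1)<\omega\tilde\alpha$), so the supports of $G_{\tilde\alpha}$ reach supremum $\tilde\alpha$, establishing (1).

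For (2), given a consecutive uncountable $\mathcal{A}$, set $\mathcal{B}=\pi^{-1}[\mathcal{A}]$: by the singleton-support lemma $|\mathcal{B}|=\omega_1$, so by $\clubsuit$ there is $\alpha\in\mathcal{L}(\omega_1)$ with $F_\alpha\subset\mathcal{B}$, whence $\pi[F_\alpha]\subset\mathcal{A}$. Every countable limit ordinal is a multiple of $\omega$, so write $\alpha=\omega\tilde\alpha$. If $\tilde\alpha=\gamma+1$ were a successor, cofinality of $F_\alpha$ in $\omega\gamma+\omega$ would force infinitely many $\omega\gamma+n_k\in F_\alpha$ with $n_k\to\omega$, making $\pi[F_\alpha]$ contain infinitely many distinct elements of $\tilde{\mathcal{P}}^\gamma$, all sharing support $\{\gamma\}$, which contradicts consecutiveness of $\mathcal{A}$. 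Hence $\tilde\alpha\in\mathcal{L}(\omega_1)$; moreover $\pi[F_\alpha]\subset\mathcal{A}$ already has pairwise distinct supports by consecutiveness, so the thinning is trivial and $G_{\tilde\alpha}=\pi[F_\alpha]\subset\mathcal{A}$.

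The main obstacle is the singleton-support lemma, which demands careful bookkeeping of the consecutive ordering and the limit-point structure of $M$; once in hand, the remainder is a direct transfer of $\clubsuit$ through the bijection $\pi$, with the only arithmetic subtlety being the re-indexing $\tilde\alpha\mapsto\omega\tilde\alpha$ needed to align the $\omega$-block structure of $\pi$ with the $\clubsuit$-witnesses.
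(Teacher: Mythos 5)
Your reduction hinges on the ``singleton-support lemma'' --- that every consecutive uncountable $\mathcal{A}\subset\mathcal{P}(\omega_1)$ contains uncountably many elements of singleton support --- and that lemma is false. Take $\mathcal{A}=\{\{\omega\xi+1,\,\omega\xi+2\}:\xi<\omega_1\}\subset\mathcal{P}_1(\omega_1)$: it is uncountable and consecutive (for $\xi<\eta$ one has $\omega\xi+2<\omega\eta+1$), yet every element has support of size two. The error lies in the last step of your argument for the lemma: the order-preserving enumeration $\{\mu_\xi:\xi<\omega_1\}$ of $M$ satisfies $\mu_\xi\geq\sup\{\mu_\eta:\eta<\xi\}$ at limit $\xi$, but equality can fail at \emph{every} limit $\xi$ unless $M$ is closed, which nothing guarantees. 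In the example above $M=\{\omega\xi+2:\xi<\omega_1\}$, and no member of $M$ is an accumulation point of $M$. Your squeeze argument is correct whenever $\beta\in M$ and $\beta=\sup(M\cap\beta)$, but such $\beta$ need not exist.

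The failure is structural rather than a bookkeeping slip: every $G_{\tilde\alpha}$ you build consists entirely of singleton-supported elements, so for the $\mathcal{A}$ above no $G_{\tilde\alpha}$ can ever be contained in $\mathcal{A}$ (indeed $\pi^{-1}[\mathcal{A}]=\emptyset$), and condition (2) fails outright; no thinning or re-indexing of your $\pi$ can fix this, since the guessing sets must be able to take arbitrary ``shapes'' inside $\mathcal{P}(\omega_1)$. This is precisely what the paper's proof arranges: it fixes a bijection $\varPhi:\omega_1\to\mathcal{P}(\omega_1)$ of the \emph{whole} hierarchy, shows the set $C_\varPhi=\{\alpha<\omega_1:\varPhi[\alpha]=\mathcal{P}(\alpha)\}$ is unbounded, defines $G_\alpha=\varPhi[F_\alpha]$ whenever this image lies in $\mathcal{P}(\alpha)$ and converges to $\alpha$ (and $F_\alpha$ otherwise), and then, given a consecutive uncountable $\mathcal{A}$, applies $\clubsuit$ not to the raw pullback $Y=\varPhi^{-1}[\mathcal{A}]$ but to a transfinitely chosen uncountable subset $X\subset Y$ whose elements are interleaved with members of $C_\varPhi$; this interleaving forces any $F_\alpha\subset X$ to have $\varPhi[F_\alpha]\subset\mathcal{P}(\alpha)$ converging to $\alpha$, so that $G_\alpha=\varPhi[F_\alpha]\subset\mathcal{A}$. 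That recursive spreading of the pullback against the club $C_\varPhi$ is the idea your transfer is missing, and without it the bijection-transfer strategy cannot meet condition (2).
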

\begin{proof}
Let $\varPhi: \omega_1 \to \mathcal{P}(\omega_1)$ be any bijection, and note that the set 
$C_{\varPhi} = \{\alpha < \omega_1 : \varPhi[\alpha] = \mathcal{P}(\alpha)\}$ is uncountable. Indeed, given any $\alpha < \omega_1$, define $\alpha_0 = \alpha$, and for any $n < \omega$, assuming that $\alpha_0, \ldots, \alpha_n$ have already been defined, let $\alpha_{n+1} < \omega_1$ be an ordinal greater than $\sup\{y : y \in \bigcup_{x \in \varPhi[\alpha_n]} \mathrm{supp}(x)\}$ and such that $\mathcal{P}(\alpha_n) \subset \varPhi[\alpha_{n+1}]$. The resulting sequence $\{\alpha_n\}_n$ satisfies both $\mathcal{P}(\alpha_n) \subset \varPhi[\alpha_{n+1}]$ and $\varPhi[\alpha_n] \subset \mathcal{P}(\alpha_{n+1})$ for each $n < \omega$.
Hence, if $\alpha_{\omega}=\sup\{\alpha_n:n<\omega\}$, we have $\alpha\leq \alpha_{\omega}$ and 
\[\varPhi[\alpha_{\omega}]=\bigcup_{n<\omega}\varPhi[\alpha_n]=\bigcup_{n<\omega}\mathcal{P}(\alpha_n)=\mathcal{P}(\alpha_\omega)\]
which shows that $C_{\varPhi}$ is unbounded in $\omega_1$.

Next, let $\{F_\alpha\}_{\alpha \in \mathcal{L}(\omega_1)}$ be the collection obtained from the Axiom $\clubsuit$. We define $\{G_\alpha\}_{\alpha \in \mathcal{L}(\omega_1)}$ as  
\begin{displaymath}G_\alpha= \left\{
\begin{array}{ll}
\varPhi[F_\alpha] & \text{ if }\varPhi[F_\alpha]\subset \mathcal{P}(\alpha)\text{ and converges to }\alpha.\\
F_\alpha & \text{ otherwise}.
\end{array} \right.
\end{displaymath}

Hence, $\{G_\alpha\}_{\alpha \in \mathcal{L}(\omega_1)}$ satisfies condition (1) as required. To verify that it also satisfies condition (2), let $\mathcal{A} \subset \mathcal{P}(\omega_1)$ be an arbitrary uncountable consecutive subset. We fix $Y = \varPhi^{-1}[\mathcal{A}]$ and construct a sequence $\{a_\gamma\}_{\gamma < \omega_1}$ by transfinite recursion as follows. We let $\beta_0 = \min\{y : y \in C_\varPhi\}$ and fix $a_0 = \min\{x \in Y : x \in (\beta_0, \omega_1)\}$. Given $0 < \gamma < \omega_1$, assuming that $a_\xi$ is defined for every $\xi < \gamma$, we let $\beta_\gamma = \min\{\beta \in C_\varPhi : \beta \geq \sup\{a_\xi : \xi < \gamma\}\}$ and define
\[a_\gamma = \min\{x \in Y : x \in (\beta_\gamma, \omega_1) \text{ and } \sup\{y : y \in \mathrm{supp}(\varPhi(x))\} \geq \beta_\gamma\}.\]

Consider the set $X=\{a_\gamma:\gamma<\omega_1\}$ which is clearly uncountable. According to $\clubsuit$ there is $\alpha\in \mathcal{L}(\omega_1)$ such that $F_\alpha\subset X$. Since $\varPhi[F_\alpha]\subset \mathcal{A}$ it follows that $\varPhi[F_\alpha]$ is consecutive. Moreover, for every $x\in F_\alpha$, from the construction of $X$, there is $\beta_x\in C_{\varPhi}$ so that $x<\beta_x<\alpha$. Therefore 
\[\varPhi[F_\alpha]=\bigcup_{x\in F_\alpha}\varPhi(x)\subset \bigcup_{x\in F_\alpha}\varPhi[\beta_x]\subset \bigcup_{x\in F_\alpha}\mathcal{P}(\beta_x)=\mathcal{P}(\alpha).\]

Finally, let $\xi<\alpha$ be arbitrary. From the construction of $X$ there is $x \in F_\alpha$ and $\beta_x\in C_\varPhi$ such that $\xi<\beta_x<x$ and $\sup\{y:y \in\mathrm{supp}(\varPhi(x))\}\geq \beta_x$.
We deduce that $\sup\{y:y \in \bigcup_{x\in \varPhi[F_\alpha]}\mathrm{supp}(x)\}=\alpha$. It follows that $\varPhi[F_\alpha]=G_\alpha$ and this completes the proof.
\end{proof}

\begin{remark}
We observe that the previous proposition in fact provides a statement equivalent to Ostaszewski's principle. Indeed, if $\{G_\alpha : \alpha \in \mathcal{L}(\omega_1)\}$ is a collection as above, it is easy to check that $\{S_\alpha : \alpha \in \mathcal{L}(\omega_1)\}$, where $ S_\alpha = \bigcup_{x \in G_\alpha} \mathrm{supp}(x) $, satisfies the conditions of the Axiom $\clubsuit$.
\end{remark}

Before proceeding, we need to recall some terminology from \cite{candido2}. A collection of ordered $ k $-tuples $\mathcal{G} = \{(G_1^\lambda, \ldots, G_k^\lambda) : \lambda \in \varGamma\}$ is said to be $ k $-separated in $ L $ if there are non-zero natural numbers $ m_1, m_2, \ldots, m_k $ such that $\mathcal{G} \subset [L]^{m_1} \times [L]^{m_2} \times \ldots \times [L]^{m_k}$, and $ G_j^\lambda \cap G_i^\gamma = \emptyset $ whenever $ i \neq j $ or $\lambda \neq \gamma$. 

If $\{I_1, \ldots, I_k\}$ denotes a partition of $\{1, 2, \ldots, m\}$, a collection $\{(x_1^\lambda, \ldots, x_m^\lambda) : \lambda \in \varGamma\} \subset L^m$ is said to be $(I_1, \ldots, I_k)$-separated if the family $\{(G_1^\lambda, \ldots, G_k^\lambda) : \lambda \in \varGamma\}$, where $ G_j^\lambda = \{x_i^\lambda : i \in I_j\} $ for each $ j = 1, 2, \ldots, k $, is $ k $-separated in $ L $.

\begin{definition}[Collapsing Property]\label{Def:Collapsing}
For every integer \(1 \leq m < \omega\), let \(\{I_1, \ldots, I_k\}\) be a partition of \(\{1, 2, \ldots, m\}\). We say that an \((I_1, \ldots, I_k)\)-separated collection \(\{(x_1^\lambda, x_2^\lambda, \ldots, x_m^\lambda) : \lambda \in \varGamma\} \subset L^m\) collapses if it has an accumulation point \((q_1, q_2, \ldots, q_m) \in K^m\) such that, for some distinct points \(a_1, a_2, \ldots, a_k\) in \(K\) with \(a_k = \infty\), we have \(\{q_i : i \in I_j\} = \{a_j\}\) for every \(1 \leq j \leq k\).
\end{definition}

\begin{definition}\label{collapsing}
A locally compact Hausdorff space $L$ is said have the \emph{collapsing property} if whenever $\{I_1,\ldots,I_k\}$ is a partition of $\{1,2,\ldots,m\}$, every uncountable $(I_1,\ldots,I_k)$-separated subset of $L^m$ collapses.
\end{definition}

In what follows, our objective is to provide an example of a locally compact space within the class $\mathcal{S}$ possessing the collapsing property. The construction is inspired by \cite[Example 2.17]{DowJunPel} and shares similarities with the steps seen in \cite[Theorem 4.2]{candido2}. Additional related constructions can be found in \cite[p. 337]{Koz4} and \cite[Theorem 3.1]{CK}.

For our construction, it is important to recall Kuratowski's definition of an ordered pair, namely $(x, y) = \{\{x\}, \{x, y\}\}$. According to our terminology, for every ordinal $\alpha$, we have $\alpha \times \alpha \subset \mathcal{P}_2(\alpha)$. More generally, for any ordinal $\alpha$ and non-zero natural numbers $m_1, m_2, \ldots, m_k$, any $k$-tuple in $[\alpha \times \alpha]^{m_1} \times \ldots \times [\alpha \times \alpha]^{m_k}$ can be identified with an element of $\mathcal{P}(\alpha)$ within our terminology.

In the following proof, we denote by $\pi : \omega_1 \times \omega_1 \to \omega_1$ the canonical projection onto the first coordinate, that is, $\pi(x, y) = x$ for every $(x, y) \in \omega_1 \times \omega_1$.

\begin{theorem}[$\clubsuit$]\label{compactclub} 
There exists a locally compact Hausdorff space $L$ in the class $\mathcal{S}$ with height $\omega$ and having the collapsing property.
\end{theorem}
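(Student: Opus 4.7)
The plan is to carry out a transfinite recursion of length $\omega_1$ using the strengthened $\clubsuit$-sequence $\{G_\alpha : \alpha \in \mathcal{L}(\omega_1)\}$ produced by Proposition \ref{EquivClubsuit}. I would take $L$ to be a set of pairs $(\alpha,n)$, with $\alpha<\omega_1$ a horizontal coordinate and $n<\omega$ the Cantor--Bendixson level, including only finitely many pairs over each $\alpha$ so that the projection $\varphi:(\alpha,n)\mapsto\alpha$ is a finite-to-one continuous surjection onto $\omega_1$, placing $L$ in the class $\mathcal{S}$. Points at level $0$ are isolated; points at higher levels are added one at a time during the recursion and come equipped with a countable decreasing neighbourhood base consisting of cofinite tails of a designated sequence of strictly lower-level points, so that first-countability is automatic and the height is exactly $\omega$ once one arranges for level $n$ to be populated for every $n<\omega$.

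The essential action happens at each limit ordinal $\alpha\in \mathcal{L}(\omega_1)$, where I would inspect $G_\alpha\subset\mathcal{P}(\alpha)$. Thanks to Kuratowski's pair coding, $G_\alpha$ can encode any consecutive sequence of finite structures over $L\cap\varphi^{-1}[\alpha]$; in particular, whenever $G_\alpha$ decodes as an $(I_1,\ldots,I_k)$-separated sequence of $m$-tuples together with a distinguished role distribution, I would choose distinct new points $a_1,\ldots,a_{k-1}$ at level $\alpha$, place them at appropriate finite heights, and declare that for each $j<k$ the tails of the sequence $\{G_j^\lambda\}_\lambda$ form a neighbourhood basis at $a_j$, while the $k$-th group $\{G_k^\lambda\}_\lambda$ is intentionally left without any new limit in $L$, so that it escapes to $\infty$ in $K$. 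Separatedness of $G_\alpha$ makes these neighbourhood bases pairwise disjoint and disjoint from previously declared ones, preserving Hausdorffness and local compactness.

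With the space constructed, the collapsing property is verified by a reduction to Proposition \ref{EquivClubsuit}. Given an uncountable $(I_1,\ldots,I_k)$-separated family $\mathcal{A}=\{(x_1^\lambda,\ldots,x_m^\lambda):\lambda\in\varGamma\}\subset L^m$, a $\Delta$-system style thinning combined with the finite-to-one nature of $\varphi$ extracts an uncountable sub-collection whose associated family $\{(G_1^\lambda,\ldots,G_k^\lambda):\lambda\in\varGamma'\}$, viewed inside $\mathcal{P}(\omega_1)$ via Kuratowski pairing, is consecutive. Proposition \ref{EquivClubsuit} then furnishes $\alpha\in\mathcal{L}(\omega_1)$ with $G_\alpha$ contained in this refined family. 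By the way the topology was defined at stage $\alpha$, the members of $G_\alpha$ converge in $K^m$ to the tuple whose $i$-th coordinate is $a_j$ when $i\in I_j$ for $j<k$ and $\infty$ when $i\in I_k$, exactly the accumulation pattern demanded by Definition \ref{Def:Collapsing}.

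The main obstacle I expect is the bookkeeping required to make a single recursion absorb every possible $m$, $k$, partition $(I_1,\ldots,I_k)$ and role assignment simultaneously while keeping the Cantor--Bendixson levels honest. One has to arrange the coding of $\mathcal{P}(\omega_1)$ so that the combinatorics of which role $I_j$ is played by which new point is recoverable from $G_\alpha$ alone, and to choose the heights of the freshly created limit points so that the global height of $L$ does not exceed $\omega$. One must also rule out spurious convergences---that the new basic neighbourhoods do not inadvertently accumulate at previously constructed points---which follows from the separatedness of $G_\alpha$ together with the disjointness of supports built into the consecutive condition. Once this infrastructure is fixed, verifying the axioms of $\mathcal{S}$, the height computation, and the collapsing property is essentially combinatorial checking.
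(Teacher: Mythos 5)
Your architecture matches the paper's closely: material points indexed by pairs of ordinals, a transfinite recursion that at each $\alpha\in\mathcal{L}(\omega_1)$ consults $G_\alpha$ from Proposition \ref{EquivClubsuit}, adds $k-1$ new points serving as limits of the first $k-1$ coordinate groups while the $k$-th group is left to escape to $\infty$, and a verification that thins an arbitrary uncountable separated family to a consecutive one and invokes the guessing property. There is, however, a genuine gap in how you topologize the new points. You declare the neighbourhood base at $a_j$ to consist of ``cofinite tails of a designated sequence of strictly lower-level points,'' i.e.\ sets of the form $\{a_j\}\cup\bigcup_{m\geq n}G^m_j$. Such sets are open (and leave the old topology intact) only when every point of the $G^m_j$ is isolated, and that case cannot be assumed: the collapsing property forces $L$ to contain points of every finite Cantor--Bendixson level, so the recursion must act on guessed families of non-isolated points. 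For those, declaring point-tails open refines the topology on $L^*_\alpha=\bigcup_{\beta<\alpha}L_\beta$ --- formerly non-isolated points acquire new small neighbourhoods --- which destroys the inductive hypothesis that the earlier $L_\beta$ sit as clopen subspaces with their original topology, and with it local compactness, the Cantor--Bendixson bookkeeping, and the accumulation patterns certified at earlier stages. The paper's fix, absent from your sketch, is to fatten: choose pairwise disjoint \emph{compact clopen} sets $W^n_j\supseteq G^n_j$ inside $L_{\beta_{n+1}}\setminus L_{\beta_n}$ and take $V_n((\alpha,\alpha+j))=\{(\alpha,\alpha+j)\}\cup\bigcup_{m\geq n}W^m_{j+1}$; the trace of each such set on $L^*_\alpha$ is already open in $\tau^*_\alpha$, so the extension is conservative, and each $V_n$ is compact. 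Your remark about ``ruling out spurious convergences'' concerns disjointness of the bases, which is a different (and easier) issue than this openness problem.

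A second point you flag but do not resolve: to keep all levels finite, the recursion must act on $G_\alpha$ only when for some $r<\omega$ the entire family avoids the derivative $(L^*_\alpha)^{(r)}$ (this is also what lets one place the sets $W^n_j$ inside $L^*_\alpha\setminus(L^*_\alpha)^{(r)}$ and bound the level of the new points); dually, in verifying the collapsing property one must first thin the uncountable family to one of uniformly bounded Cantor--Bendixson level --- possible since $L^{(\omega)}=\emptyset$ and by pigeonhole over $\omega_1$ --- before appealing to Proposition \ref{EquivClubsuit}, whereas your thinning secures only consecutiveness. Finally, your plan to obtain height exactly $\omega$ by ``arranging for level $n$ to be populated'' would require additional construction steps you never specify; the paper gets the lower bound for free from the collapsing property itself: if $L^{(r+1)}=\emptyset$, then $L^{(r)}$ is uncountable and an uncountable $(\{1\},\{2\})$-separated family inside $L^{(r)}\times L^{(r)}$ could only accumulate at $(\infty,\infty)$, a contradiction. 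With the clopen fattening and the bounded-level bookkeeping supplied, your outline becomes essentially the paper's proof.
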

\begin{proof}
We assume that $\clubsuit$ holds and fix a collection $\mathcal{G}=\{G_\alpha:\alpha\in \varGamma\}$ satisfying the conditions of Proposition \ref{EquivClubsuit}. Taking for material points from the set $\omega_1\times \omega_1$, we construct by transfinite induction a family of topological spaces $\{(L_\alpha,\tau_\alpha):\alpha<\omega_1\}$ as follows. We start by fixing $L_0=\{(0,0)\}$ and $\tau_{0}=\{\emptyset,\{(0,0)\}\}$. For an arbitrary $\alpha<\omega_1$ we assume a collection of locally compact Hausdorff spaces $\{(L_\beta,\tau_\beta):\beta<\alpha\}$ satisfying the following conditions for every $\gamma\leq\beta <\alpha$:
\begin{itemize}
\item[(a)] $L_\beta^{(\omega)}=\emptyset$.
\item[(b)]  $\pi^{-1}[\beta]=L_\beta$ and $\pi^{-1}[\{\gamma\}]$ is finite. 
\item[(c)] $L_\gamma$ is a clopen subset of $L_{\beta}$.
\end{itemize}
Let $ L_\alpha^* = \bigcup_{\beta < \alpha} L_{\beta} $ be equipped with the topology $ \tau_{\alpha}^* $ generated by $ \bigcup_{\beta < \alpha} \tau_{\beta} $. It is readily seen that $ (L_\alpha^*, \tau^*_{\alpha}) $ is a locally compact Hausdorff space. Moreover, for condition (a), it follows that $ (L_\alpha^*)^{(\omega)} = \emptyset $. We construct the space $ (L_\alpha, \tau_\alpha) $ in the following manner.

If $\alpha$ is a successor ordinal, then we define $L_\alpha = L_\alpha^* \cup \{(\alpha, \alpha)\}$ and take $\tau_{\alpha}$ as the topology generated by $\tau^*_{\alpha} \cup \{\{(\alpha, \alpha)\}\}$.

If $ \alpha \in \mathcal{L}(\omega_1) $ and $ G_\alpha $ is not a $ k_\alpha $-separated family in $ L_\alpha^* $, denoted as $ \{(G_1^n, \ldots, G_{k_\alpha}^n) : n < \omega\} $ for some $ 1 < k_\alpha < \omega $, or if $ G_\alpha $ has this form but for every $ r < \omega $, there is $ n < \omega $ such that $ (G_1^n \cup \ldots \cup G_k^n) \cap (L_\alpha^*)^{(r)} \neq \emptyset $, then again we set $ L_\alpha = L_\alpha^* \cup \{(\alpha, \alpha)\} $ and let $ \tau_\alpha $ be generated by $ \tau^*_{\alpha} \cup \{\{(\alpha, \alpha)\}\} $.

For otherwise, since $G_\alpha=\{(G_1^n, \ldots, G_{k_\alpha}^n): n < \omega\}$ converges to $\alpha$ in the sense of Definition \ref{P(alpha)Convergence}, there is a strictly increasing sequence $\{\beta_n\}_n$ in $\alpha$ such that $\sup\{\beta_n:n<\omega\}=\alpha$ and $\mathrm{supp}(G_1^n\cup \ldots \cup G_{k_\alpha}^n)\subset [\beta_n+1,\beta_{n+1}]$ for each $n<\omega$.

From condition (b), we have that $G_1^n\cup \ldots \cup G_{k_\alpha}^n\subset L_{\beta_{n+1}}\setminus L_{\beta_n}$ for each $n<\omega$. Furthermore, since $G^n_i\cap G^n_j=\emptyset$ whenever $i\neq j$ for every $n<\omega$, from condition (c), we can fix pairwise disjoint clopen subsets $W^n_1,\ldots,W^n_{k_\alpha}$ contained in $L_{\beta_{n+1}}\setminus L_{\beta_n}$, such that $G^n_j\subset W^n_j$ for each $1\leq j \leq k_\alpha$.

Recalling that, for some $r<\omega$, $(G_1^n \cup \ldots \cup G_k^n) \cap (L_\alpha^*)^{(r)}=\emptyset$, we may assume that $W^n_1\cup \ldots\cup W^n_{k_\alpha}\subset L_\alpha^*\setminus (L_\alpha^*)^{(r)}$ for every $n<\omega$.

Next, for each $n<\omega$ and $0\leq j \leq k_\alpha-2$, we fix
\begin{align*}
V_{n}((\alpha,\alpha+j))=\{(\alpha,\alpha+j)\}\cup\left(\bigcup_{m\geq n} W_{j+1}^{m}\right).
\end{align*}
\normalsize
Then we let $ L_{\alpha} = L_{\alpha}^* \cup \{(\alpha,\alpha), (\alpha,\alpha+1), \ldots, (\alpha,\alpha+k_\alpha-2)\} $ and $ \tau_{\alpha} $ be the topology generated by $ \tau^*_{\alpha} \cup \{V_n((\alpha,\alpha+j)) : n < \omega, \ 0 \leq j \leq k_\alpha-2\} $. We can readily verify that all conditions (a), (b), and (c) are satisfied by the space $ (L_{\alpha}, \tau_{\alpha}) $. With this, we have completed the construction of the collection $ \{(L_\alpha, \tau_\alpha) : \alpha < \omega_1\} $.

We define $ L $ as the union of $ L_\alpha $ for $ \alpha < \omega_1 $, equipped with the topology generated by the union of $ \tau_{\alpha} $ for $ \alpha < \omega_1 $. Consequently, $ L $ becomes a locally compact Hausdorff space.

To demonstrate that $ L $ belongs to the class $ \mathcal{S} $, we let $ \varphi: L \rightarrow \omega_1 $ be the restriction to $ L $ of the canonical projection $ \pi: \omega_1 \times \omega_1 \to \omega_1 $. Condition (b) guarantees that $ \varphi $ is a finite-to-one surjection. Furthermore, it is continuous, as $ \varphi^{-1}[[\beta+1,\alpha]] = L_\alpha \setminus L_\beta $ is an open set whenever $ \beta \leq \alpha < \omega_1 $ according to condition (c).

Let $K=L\cup \{\infty\}$ be the one-point compactification of $L$. To prove that $L$ has the collapsing property, we take an arbitrary $1\leq m<\omega$, a partition $\{I_1,\ldots,I_k\}$ of $\{1,\ldots,m\}$, and let $\mathcal{C}=\{(x_1^\lambda,x_2^\lambda,\ldots,x_m^\lambda):\lambda<\omega_1\}\subset L^m$ be an uncountable $(I_1,\ldots,I_k)$-separated subset.

For each $\lambda<\omega_1$ and $1\leq j \leq k$ we define $A_j^\lambda=\{x_i^\lambda:i\in I_j\}$ and form the the collection $\mathcal{A}=\{(A_1^{\lambda},\ldots, A_{k}^{\lambda}):\lambda<\omega_1\}$ which is clearly $k$-separated in $L$. 

By passing to an uncountable subset of $\mathcal{A}$ if necessary, we may assume that $\mathcal{A}$ is consecutive and there is $r<\omega$ such that $(A_1^{\lambda}\cup \ldots \cup A_{k}^{\lambda})\cap L^{(r)}=\emptyset$ for all $\lambda<\omega_1$. Recalling the properties of our collection $\mathcal{G}$, there is  $\alpha\in \mathcal{L}(\omega_1)$ such that $G_{\alpha}=\{(G_1^{n},\ldots,G_k^{n}):n<\omega_1\}\subset \mathcal{A}$. If $k=1$ we fix $q=(\infty,\stackrel{m}{\ldots},\infty)$. Since $(\alpha,\alpha)$ is an isolated point of $L$, for every compact subset $Z\subset L$, the set $\{n<\omega:Z\cap G_1^n\neq \emptyset\}$ is finite. It follows that, if $\{y_n\}_n$ is sequence such that $y_{n}\in G_1^n$ for all $n<\omega$, then $\{y_n\}_n$ converges to $\infty$. We deduce that $q$ is an accumulation point of $\mathcal{C}$ which shows that $\mathcal{C}$ collapses.

If $k>1$, we let $q=(q_1,\ldots,q_m)$ be such that $q_i=(\alpha,\alpha+j)$ for each $i \in I_{j+1}$ for all $0\leq j<k-2$, and $q_i=\infty$ for each $i \in I_{k}$.  According to the construction of $L$, every sequence $\{y_n\}_n$ in $L$ such that $y_{n}\in G_{j+1}^n$, for every $n<\omega$, converges to $(\alpha,\alpha+j)$ if $0\leq j<k-2$. Furthermore, since $V_{m}((\alpha,\alpha+j))\cap  G^n_{k}=\emptyset$ for every $m,n<\omega$ and $0\leq j<k-2$, it follows that $\{n<\omega:Z\cap G^n_{k}\neq \emptyset\}$ is finite for every compact subset $Z\subset L$. Consequently, every sequence $\{y_n\}_n$ such that $y_{n}\in G^n_{k}$, for all $n<\omega$, converges to $\infty$. We deduce that $q$ is an accumulation point of $\mathcal{C}$ in $K^m$. Therefore, $\mathcal{C}$ collapses. 
 
We prove next that $L$ has height $\omega$. It follows from our construction that every point $x\in L$ admits a neighborhood $V$ such that $V\cap L^{(r)}=\emptyset$ for some $r<\omega$. Hence, $L^{(\omega)}=\emptyset$.
If $L^{(r+1)}=\emptyset$ for some $r<\omega$, as stated in \cite[Proposition 3.4]{candido2}, we infer that $L^{(r)}$ must be uncountable. Consequently, any uncountable $(\{1\},\{2\})$-separated family contained in $L^{(r)}\times L^{(r)}$ can only have $(\infty,\infty)$ as a possible accumulation point in $K^2$. However, this contradicts the collapsing property of $L$. Thus, the proof is concluded.
\end{proof}

\begin{remark}\label{Rem:Singleton}
It is important to observe that, if $L$ denotes the space constructed in Theorem \ref{compactclub}, then whenever $\mathcal{A}$ is an uncountable collection of pairwise disjoint points in $L^n \setminus \varDelta(K^n)$, for $n \geq 1$, $\mathcal{A}$ admits an accumulation point in $L^n \cap \varDelta(K^n)$. Indeed, in this case, we can construct a collection 
$\mathcal{C} = \{(x^\alpha_1, \ldots, x^\alpha_n, y^\alpha)\} \subset L^{n+1} \setminus \varDelta(K^{n+1})$, consisting of pairwise disjoint points such that 
$x_\alpha = (x^\alpha_1, \ldots, x^\alpha_n) \in \mathcal{A}$ for each $\alpha < \omega_1$. It is easily seen that $\mathcal{C}$ is a $(\{1, \ldots, n\}, \{n+1\})$-separated subset of $L^{n+1}$ and thus collapses by the property of $L$, meaning it admits an accumulation point of the form $(a, \ldots, a, \infty)$, where $a \in L$. It follows that $(a, \ldots, a)$ is an accumulation point of $\mathcal{A}$.
\end{remark}


\section{Auxiliary results on weak$^*$ continuous functions}
\label{Sec-Auxiliary}

In this section, we will always consider $L$ to be a locally compact space in the class $\mathcal{S}$ possessing the collapsing property, with $K = L \cup \{\infty\}$ as its Alexandroff one-point compactification. As demonstrated in the previous section, such spaces exist under the assumption of Ostaszewski's principle (Theorem \ref{compactclub}). We assume that $L$ is equipped with a finite-to-one continuous surjection $\varphi: L \to \omega_1$, which induces the clopen cover $\{L_\alpha:\alpha < \omega_1\}$ as described in Section \ref{Sec-Prelim}. 

We will explore the consequences of the collapsing property on continuous functions $\nu: K^n \to M_0(K^m)$, $\nu(x)=\nu_x$, where $M_0(K^m)$ is endowed with the weak$^*$ topology induced by $C_0(L)$, and such that $\nu_x = 0$ for every $x \in \partial K^n$. These functions will henceforth be referred to as weak$^*$ continuous functions that vanish at infinity. The main reason these functions are so relevant to our investigation is due to the following characterization, which holds for all locally compact spaces, see \cite[Theorem VI 7.1]{DunfordSchwartz}.

\begin{theorem}\label{Thm:OperatorCharacterization}
For all $m, n \geq 1$, and for every weak$^*$ continuous function $\nu: K^n \to M_0(K^m)$ that vanishes at infinity, there exists a unique operator $T: C_0(L^m) \to C_0(L^n)$ such that $\nu_x = T^*(\delta_x)$ for each $x \in K^n$. Moreover, the norm of $T$ is given by $\|T\| = \sup_{x \in K^n} \|\nu_x\|$. Conversely, if $T: C_0(L^m) \to C_0(L^n)$ is a bounded linear mapping, then the formula $x \mapsto T^*(\delta_x)$ defines a weak$^*$ continuous function from $K^n$ to $M_0(K^m)$ that vanishes at infinity.
\end{theorem}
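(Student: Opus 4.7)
The plan is to identify $T$ with integration against $\nu$, using that $M_0(K^m) = C_0(L^m)^*$ and that the assignment $x \mapsto \delta_x$ embeds $K^n$ into $M_0(K^n)$ weak$^*$-continuously once one interprets $\delta_x$ for $x\in\partial K^n$ as the zero functional on $C_0(L^n)$.

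First I would treat the forward direction. Given a weak$^*$ continuous $\nu: K^n \to M_0(K^m)$ that vanishes on $\partial K^n$, define $(Tf)(x) = \nu_x(f)$ for $f\in C_0(L^m)$ and $x\in K^n$. Weak$^*$ continuity of $\nu$ gives continuity of $Tf$ on $K^n$; the hypothesis $\nu_x=0$ for $x\in\partial K^n$ gives $Tf\equiv 0$ there, so $Tf\in C_0(L^n)$. Linearity is immediate. For boundedness, the map $x\mapsto\|\nu_x\|$ is weak$^*$ lower semicontinuous and hence bounded on the compactum $K^n$ by some $M<\infty$, yielding $\|Tf\|_\infty\le M\|f\|$ and so $\|T\|\le M = \sup_{x\in K^n}\|\nu_x\|$. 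The identity $T^*(\delta_x)=\nu_x$ is then a one-line computation,
\[
\langle T^*(\delta_x), f\rangle = \langle\delta_x, Tf\rangle = (Tf)(x) = \nu_x(f),
\]
and uniqueness of $T$ follows because two candidate operators would coincide when evaluated at every point $x\in L^n$. The reverse inequality $\|\nu_x\|=\|T^*(\delta_x)\|\le\|T^*\|=\|T\|$ then upgrades the norm bound to an equality.

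For the converse, given $T:C_0(L^m)\to C_0(L^n)$, set $\nu_x := T^*(\delta_x)$, where $\delta_x$ is viewed as the functional $f\mapsto f(x)$ on $C_0(L^n)$; this vanishes precisely when $x\in\partial K^n$, placing $\delta_x$ in $M_0(K^n)$ automatically. Since $x_\alpha\to x$ in $K^n$ implies $g(x_\alpha)\to g(x)$ for every $g\in C_0(L^n)$, the assignment $x\mapsto\delta_x$ is weak$^*$ continuous from $K^n$ into $M_0(K^n)$, and $T^*$ is itself weak$^*$--weak$^*$ continuous; composing gives the weak$^*$ continuity of $\nu$, and vanishing on $\partial K^n$ is built in.

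The main (mild) obstacle is simply to keep the identifications straight: ensuring that $Tf$ really vanishes on $\partial K^n$ rather than merely being continuous there, and that $\delta_x$ for boundary points $x$ is correctly interpreted as the zero element of $M_0(K^n)$ so that both $\nu_x=T^*(\delta_x)$ and the weak$^*$ continuity statement make sense uniformly across $K^n$. Once these conventions are pinned down, the argument is essentially a restatement of the duality $C_0(L^m)^*\cong M_0(K^m)$ combined with the weak$^*$ continuity of $T^*$, as in the cited Dunford--Schwartz reference.
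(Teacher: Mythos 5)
Your construction coincides with the paper's: the paper defines $T(f)(x)=\int f\,d\nu_x$, declares the verifications straightforward, and the converse is exactly your composition of the weak$^*$--weak$^*$ continuous adjoint $T^*$ with the weak$^*$ continuous map $x\mapsto\delta_x$ (with $\delta_x$ read as $0$ on $\partial(K^n)$). However, one step of your verification is justified incorrectly: you assert that $x\mapsto\|\nu_x\|$ is weak$^*$ lower semicontinuous ``and hence bounded on the compactum $K^n$.'' Lower semicontinuity on a compact set guarantees only that the infimum is attained; an l.s.c.\ function on a compact set can be unbounded above (consider $x\mapsto 1/|x|$ on $[-1,1]$, set to $0$ at the origin). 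So, as written, your argument for $\|T\|\le M<\infty$ does not go through, and this finiteness is also what the norm formula $\|T\|=\sup_{x\in K^n}\|\nu_x\|$ rests on.

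The repair is standard and short. For each fixed $f\in C_0(L^m)$, the function $x\mapsto \nu_x(f)$ is continuous on the compact space $K^n$, hence bounded; therefore the family $\{\nu_x : x\in K^n\}\subset C_0(L^m)^*$ is pointwise bounded, and the uniform boundedness principle gives $M=\sup_{x\in K^n}\|\nu_x\|<\infty$, after which your estimate $\|Tf\|\le M\|f\|$ and the reverse inequality $\|\nu_x\|=\|T^*(\delta_x)\|\le\|T\|$ yield the stated equality. (Alternatively, once you know $Tf\in C_0(L^n)$ for every $f$, the closed graph theorem gives boundedness of $T$ directly.) With this substitution the remainder of your proof --- the identity $\langle T^*(\delta_x),f\rangle=\nu_x(f)$, uniqueness by evaluation at points of $L^n$, and the converse direction --- is correct and is the argument behind the Dunford--Schwartz result the paper cites, whose proof of boundedness likewise passes through Banach--Steinhaus rather than semicontinuity.
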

\begin{proof}
Given a function $\nu: K^n \to M_0(K^m)$ as stated in the theorem, we define an operator $T: C_0(L^m) \to C_0(L^n)$ by the formula
\[T(f)(x) = \int f \, d\nu_x.\]
It is straightforward to verify that $T$ satisfies the required conditions. The converse statement is evident.
\end{proof}

\begin{remark}\label{Rem:SeAnulaNoInfinito}
It follows from the previous theorem, and the fact that $K$ denotes a scattered compact space, that if $\nu: K^n \to M_0(K^m)$ is weak$^*$ continuous and vanishes at infinity, then for every $f \in C_0(L^m)$, there are at most countably many points $x \in K^n$ such that $\int f \, d\nu_x \neq 0$; see \cite[Corollary 8.5.6]{Se}. Moreover, using an argument similar to the one in \cite[Lemma 4.4]{candido2}, it follows that for any pair of natural numbers $n, m \geq 1$, if $\nu: K^n \to M_0(K^m)$ is weak$^*$ continuous and vanishes at infinity, then for any countable set $F \subset L^m$, the set $\{y \in K^n : |\nu_y|(F) \neq 0\}$ is also countable.
\end{remark}

The following theorems presented in this section are central to our study and constitute the core of this paper. They play a pivotal role in demonstrating the failure of continuity of many linear operators on $C_0(L^n)$.

\begin{theorem}\label{Thm:vanish-B}
Let $n \geq 1$ be an integer, and let $\nu: K^n \to M_0(K^n)$ be a weak$^*$ continuous function that vanishes at infinity. Suppose $A = \{y_{\alpha} : \alpha < \omega_1\}$ and $B = \{z_{\alpha} : \alpha < \omega_1\}$ are subsets of $L^n$ such that $\{\mathrm{supp}(y_\alpha) : \alpha < \omega_1\}$ forms an uncountable $\varDelta$-system with root $R_A$, and $\mathrm{supp}(y_\alpha) \setminus (\mathrm{supp}(z_\alpha) \cup R_A) \neq \emptyset$ for all $\alpha < \omega_1$. Then, there exists $\rho < \omega_1$ such that $\nu_{y_{\alpha}}(\{z_{\alpha}\}) = 0$ whenever $\rho < \alpha < \omega_1$.
\end{theorem}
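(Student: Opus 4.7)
I argue by contradiction. Assume the conclusion fails: by pigeonhole we find an uncountable $U \subseteq \omega_1$ and $\varepsilon > 0$ with $|\nu_{y_\alpha}(\{z_\alpha\})| \geq \varepsilon$ for every $\alpha \in U$. The plan is to (i) thin $U$ via combinatorial reductions, (ii) apply the collapsing property of $L$ to produce a highly structured convergent sequence, and (iii) derive a contradiction from the weak$^*$-continuity of $\nu$ using an iterated application of Remark~\ref{Rem:SeAnulaNoInfinito}.

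First, I apply the $\Delta$-system lemma to $\{\mathrm{supp}(y_\alpha) \cup \mathrm{supp}(z_\alpha) : \alpha \in U\}$ to obtain a root $R$, and note that $R_A \subseteq R$. Successive pigeonhole arguments then stabilize $\mathrm{supp}(z_\alpha) \cap R = R_z$, fix the positions and values of coordinates of $y_\alpha,z_\alpha$ lying in $R$, and select a single index $i_0 \in \{1,\ldots,n\}$ so that $y_\alpha^{i_0} \in \mathrm{supp}(y_\alpha) \setminus (\mathrm{supp}(z_\alpha) \cup R_A)$ for every $\alpha \in U$. A key structural claim is that $y_\alpha^{i_0} \notin R$: otherwise, the $\Delta$-system property forces $y_\alpha^{i_0} \in \mathrm{supp}(y_\beta) \cup \mathrm{supp}(z_\beta)$ for each $\beta \in U$; since $y_\alpha^{i_0} \notin R_A$ rules out $\mathrm{supp}(y_\beta)$ for $\beta \neq \alpha$ (as $\mathrm{supp}(y_\alpha) \cap \mathrm{supp}(y_\beta) = R_A$), we would have $y_\alpha^{i_0} \in \mathrm{supp}(z_\beta) \cap R = R_z \subseteq \mathrm{supp}(z_\alpha)$, contradicting the choice of $i_0$. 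Hence the $y_\alpha^{i_0}$ are pairwise distinct across $U$ and avoid every $\mathrm{supp}(z_\beta)$.

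Let $m = |\mathrm{supp}(z_\alpha) \setminus R_z|$, which is positive because $B$ is uncountable while tuples supported inside $R_z$ form a finite set, and enumerate $\mathrm{supp}(z_\alpha) \setminus R_z = \{w_\alpha^1,\ldots,w_\alpha^m\}$. Then the family
\[
\mathcal{C} = \{(y_\alpha^{i_0}, w_\alpha^1, \ldots, w_\alpha^m) : \alpha \in U\} \subset L^{m+1}
\]
is $(\{1\},\{2,\ldots,m+1\})$-separated: the first coordinates are pairwise distinct and avoid every $\mathrm{supp}(z_\beta)$ by the previous paragraph, while the free $z$-parts are pairwise disjoint across $\alpha$. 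The collapsing property of $L$ therefore yields an accumulation point $(a_1,\infty,\ldots,\infty) \in K^{m+1}$ with $a_1 \in L$. By first-countability of $K$ and compactness of $K^n$, I extract a sequence $\{\alpha_k\}_k \subseteq U$ with $y_{\alpha_k}^{i_0} \to a_1$, $w_{\alpha_k}^j \to \infty$ for each $j$, and $y_{\alpha_k} \to y_\infty \in K^n$, $z_{\alpha_k} \to z_\infty \in \partial K^n$. A diagonal refinement using Remark~\ref{Rem:SeAnulaNoInfinito} (applied at step $k$ to the countable sets $\{z_{\alpha_l} : l<k\}$ and to the countable atom sets of previously chosen $\nu_{y_{\alpha_l}}$, each excluding only countably many candidates from an uncountable reservoir) ensures $\nu_{y_{\alpha_l}}(\{z_{\alpha_k}\}) = 0$ whenever $l \neq k$; using the zero-dimensional scattered structure of $L^n$ and Remark~\ref{Rem:SeAnulaNoInfinito} once more, I choose pairwise disjoint compact clopen neighborhoods $B_k$ of $z_{\alpha_k}$ in $L^n$, locally finite in $L^n$ (compatible with $z_{\alpha_k} \to z_\infty \in \partial K^n$), satisfying $|\nu_{y_{\alpha_l}}|(B_k \setminus \{z_{\alpha_k}\}) = 0$ for all $l \neq k$.

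The intended contradiction comes from the test function $f = \sum_k \epsilon_k \chi_{B_k}$, where $\epsilon_k = \mathrm{sign}(\nu_{y_{\alpha_k}}(\{z_{\alpha_k}\}))$: by construction $\int f\, d\nu_{y_{\alpha_k}} \geq \varepsilon/2$ for every $k$, while weak$^*$-continuity of $\nu$ forces $\int f\, d\nu_{y_{\alpha_k}} \to \int f\, d\nu_{y_\infty}$, with the limit controllable via a further application of the Remark to $\nu_{y_\infty}$ at the countable set $\bigcup_k B_k$. \textbf{The main obstacle} is arranging $f$ to belong to $C_0(L^n)$: the naive locally finite disjoint sum is continuous on $L^n$ but does not automatically extend continuously with value zero to $\partial K^n$, since the $B_k$'s accumulate at $z_\infty \in \partial K^n$. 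The technical heart of the proof is therefore the design of the $B_k$'s and their weights using the specific $\clubsuit$-neighborhood basis of $L$ from Theorem~\ref{compactclub}, together with the free coordinate structure from the thinning step, so that $f$ does extend continuously to $\partial K^n$ while still registering the atomic masses $\nu_{y_{\alpha_k}}(\{z_{\alpha_k}\})$ with lower bound at least $\varepsilon/2$, thereby contradicting the boundedness of the operator associated with $\nu$.
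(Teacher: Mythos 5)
There is a genuine gap, and you have correctly located it yourself: your test function $f=\sum_k \epsilon_k\chi_{B_k}$ cannot be made to lie in $C_0(L^n)$, and this is not a repairable technicality of your construction but a consequence of the way you oriented the collapsing property. With the partition $(\{1\},\{2,\ldots,m+1\})$, Definition \ref{Def:Collapsing} forces the \emph{last} block to collapse to $\infty$, so it is the free coordinates of the $z_\alpha$'s that escape to the boundary, i.e.\ $z_{\alpha_k}\to z_\infty\in\partial(K^n)$. But then \emph{any} $f\in C_0(L^n)$ satisfies $f(z_{\alpha_k})\to 0$ (every compact subset of $L^n$ lies in some $L_\alpha^n$, which the $z_{\alpha_k}$ eventually leave), so the principal terms $f(z_{\alpha_k})\,\nu_{y_{\alpha_k}}(\{z_{\alpha_k}\})$ tend to $0$ no matter how the neighborhoods $B_k$ and weights are designed; with unit weights $f\notin C_0(L^n)$ (it equals $\pm1$ on sets accumulating at $z_\infty$), and with decaying weights the claimed uniform lower bound $\epsilon/2$ evaporates. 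Moreover, even granting a suitable $f$, your limit point $y_\infty$ need not lie in $\partial(K^n)$, so $\nu_{y_\infty}$ need not vanish, and Remark \ref{Rem:SeAnulaNoInfinito} only says the set of points charging a fixed countable set is countable; it does not let you steer the limit $y_\infty$, which is determined by the sequence you already extracted.

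The paper's proof runs the collapsing property in exactly the opposite orientation, and that reversal is the missing idea. One separates the free coordinates of the $z_\alpha$'s into \emph{singleton} blocks, so that they converge to genuine, pairwise distinct points of $L$, while the free $y$-coordinates (guaranteed to exist by the hypothesis $\mathrm{supp}(y_\alpha)\setminus(\mathrm{supp}(z_\alpha)\cup R_A)\neq\emptyset$) are thrown into the block collapsing to $\infty$, together with padding points of finite sets $G_\alpha\supset\mathrm{supp}(y_\alpha)\cup\mathrm{supp}(z_\alpha)$ chosen so that $|\nu_{y_\alpha}|(K^n\setminus G_\alpha^n)<\epsilon/2$. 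Then a single \emph{fixed} compact clopen set $V\subset L^n$ around the limit of the $z$-side atoms works as test set ($\chi_V\in C_0(L^n)$ precisely because the atoms now converge inside $L^n$), with $G_{\alpha_\gamma}^n\cap V=\{v_\gamma\}$ giving $|\nu_{u_\gamma}(V)|\geq\epsilon-\epsilon/2=\epsilon/2$ along the net, while the evaluation points $u_\gamma$ converge to a point $u\in\partial(K^n)$ where $\nu_u=0$; weak$^*$ continuity then yields the contradiction. Your preliminary reductions (the $\varDelta$-system on $\mathrm{supp}(y_\alpha)\cup\mathrm{supp}(z_\alpha)$, the stabilizations, and the structural claim that $y_\alpha^{i_0}\notin R$, which is correct) are sound and close in spirit to the paper's setup, but the proof cannot be completed along the route you chose.
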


\begin{proof}
Towards a contradiction, suppose that for every $\beta < \omega_1$, there exists some $\beta < \alpha < \omega_1$ such that $\nu_{y_{\alpha}}(\{z_{\alpha}\}) \neq 0$. By passing to an appropriate uncountable subset of indices if necessary, we may assume there exists $\epsilon > 0$ such that $|\nu_{y_{\alpha}}|(\{z_{\alpha}\})\geq \epsilon$ for all $\alpha < \omega_1$. Furthermore, recalling Remark \ref{Rem:SeAnulaNoInfinito}, we may regard the family $\{\mathrm{supp}(z_\alpha) : \alpha < \omega_1\}$ as an uncountable $\varDelta$-system with root $R_B$.

Moreover, we may assume that there exists a partition $\{I_1, \ldots, I_{k+1}\}$ of $\{1,\ldots,n\}$ such that, for each $\alpha < \omega_1$, if $y_\alpha = (y^\alpha_1, \ldots, y^\alpha_n)$, then $R_A = \{y^\alpha_j : j \in I_{k+1}\}$. For each $i, j \in \{1, \ldots, n\} \setminus I_{k+1}$, we have $y^\alpha_i = y^\alpha_j$ if and only if $i$ and $j$ belong to the same set $I_r$ for some $1 \leq r \leq k$. Additionally, we assume there exists a sequence $\{d_j\}_{j \in I_{k+1}}$ such that $y^\alpha_j = d_j$ for all $j \in I_{k+1}$ and for every $\alpha < \omega_1$.

Similarly, we suppose a partition $\{F_1, \ldots, F_{l+1}\}$ of $\{1, \ldots, n\}$ such that, if $z_\alpha = (z^\alpha_1, \ldots, z^\alpha_n)$, then $R_B = \{z^\alpha_j : j \in F_{l+1}\}$. For each $i, j \in \{1,\ldots,n\} \setminus F_{l+1}$, we have $z^\alpha_i = z^\alpha_j$ if and only if $i, j$ belong to the same set $F_s$ for some $s \in \{1, \ldots, l\}$. We also assume that there exists a sequence $\{c_j\}_{j \in F_{l+1}}$ such that $z^\alpha_j = c_j$ for each $j \in F_{l+1}$ and for all $\alpha < \omega_1$.

If $R_A$ or $R_B$ is empty, we set $I_{k+1} = \emptyset$ or $F_{l+1} = \emptyset$, respectively, and proceed with the proof with minor adjustments.

Since $\mathrm{supp}(y_\alpha)\setminus (\mathrm{supp}(z_\alpha)\cup R_A)\neq \emptyset$ for every $\alpha<\omega_1$, by relabeling the partitions, we may assume that there exists $1 < r \leq k$ such that for each $\alpha < \omega_1$,
\[\{y^\alpha_1, \ldots, y^\alpha_n\} \setminus (\{z^\alpha_1, \ldots, z^\alpha_n\}\cup R_A) = \{y^\alpha_j : j \in I_1 \cup \ldots \cup I_r\},\]
and $\{y^\alpha_j : j \in I_i\} = \{z^\alpha_j : j \in F_{i-r}\}$ for each $r + 1 \leq i \leq k$. 

Since Radon measures on scattered spaces are atomic, for each $\alpha < \omega_1$, there exists a finite set $G_{\alpha} \subset L$ such that $\mathrm{supp}(y_\alpha) \cup \mathrm{supp}(z_\alpha) \subset G_{\alpha}$ and 
\[|\nu_{y_\alpha}|(K^n \setminus G_{\alpha}^n) < \frac{\epsilon}{2}.\]

By applying the $\varDelta$-System Lemma, we may assume that the family $\{G_\alpha : \alpha < \omega_1\}$ forms a $\varDelta$-system with root $Q$, where $(R_A \cup R_B) \subset Q$, and that all the elements of the family have the same cardinality. For each $\alpha$, define $G'_\alpha = G_\alpha \setminus Q$ and write $G'_{\alpha} = \{x^{\alpha}_1, \ldots, x^{\alpha}_m\}$, where for each $1 \leq i \leq r$, $x_i^\alpha = y^\alpha_j$ if and only if $j \in I_i$, and for $r+1 \leq i \leq r+l$, $x_i^\alpha = z^\alpha_j$ if and only if $j \in F_{i-r}$.

Now consider the set $\mathcal{C} = \{(x_1^\alpha, \ldots, x_m^\alpha) : \alpha < \omega_1\}$. Observe that $\mathcal{C}$ can be regarded as a $(\{r+1\},\ldots,\{r+l\}, \{1,\ldots,r,r+l+1, \ldots, m\})$-separated subset of $L^m$. By removing countably many elements from $\mathcal{C}$, we may assume that there exists $\xi < \omega_1$ such that $Q^n \subset L_{\xi}^n$ and $\mathcal{C} \subset (L \setminus L_\xi)^n$. 

Since $L$ has the collapsing property, the set $\mathcal{C}$ has an accumulation point $a = (a_1, \ldots, a_m)$ such that $a_{r+1}, \ldots, a_{r+l}$ are pairwise distinct points of $L \setminus Q$, and $a_i = \infty$ otherwise. Finally, we fix a net $\{(x^{\alpha_{\gamma}}_1, \ldots, x^{\alpha_{\gamma}}_m)\}_{\gamma \in \varGamma}$ in $\mathcal{C}$ that converges to $a$.

Let $U_1, U_2, \ldots, U_{l+1}$ be pairwise disjoint clopen compact subsets of $L$ such that $a_{r+i} \in U_i$ for every $1 \leq i \leq l$, and $R_B \subset U_{l+1}$. Additionally, we consider that $(U_1 \cup \ldots \cup U_{l+1}) \cap (Q\setminus R_B) = \emptyset$. By passing to a subnet if necessary, we may suppose that $U_i \cap G_{\alpha_{\gamma}} = \{x^{\alpha_{\gamma}}_{r+i}\}$ for all $1 \leq i \leq l$, and $U_{l+1} \cap G_{\alpha_{\gamma}} = R_B$ for all $\gamma \in \Gamma$. 

For each $1 \leq j \leq n$, set $V_j = U_{r+i}$ and $v_j = a_{r+i}$ whenever $j \in F_i$ for some $1 \leq i \leq l$. Recalling that $R_B = \{c_j : j \in F_{l+1}\}$, we define $V_j = U_{l+1}$ and $v_j = c_j$ for each $j \in F_{l+1}$. Let $v = (v_1, \ldots, v_n)$, and define the set $P = \{(w_1, \ldots, w_n) : w_j = v_j \text{ if } j \notin F_{l+1}, \text{ and } w_j \in R_B \text{ otherwise}\}$. We choose a clopen neighborhood $V \subset V_1 \times \ldots \times V_n$ of $v$ such that $V \cap P = \{v\}$.

Let $\{v_\gamma\}_{\gamma \in \varGamma}$ be a net where, for each $\gamma$, $v_\gamma = (v^\gamma_1, \ldots, v^\gamma_n)$, and $v^\gamma_j = x^\gamma_{r+i}$ whenever $j \in F_i$ for some $1 \leq i \leq l$, with $v^\gamma_j = c_j$ for each $j \in F_{l+1}$. Since $\{v_\gamma\}_{\gamma \in \varGamma}$ converges to $v$, by the construction of $V$, we may pass to a subnet and assume that $G_{\alpha_\gamma}^n \cap V = \{v_\gamma\}$ for each $\gamma$.

Recalling that $R_A = \{d_j : j \in I_{k+1}\}$, for each $1 \leq j \leq n$, fix $u_j = a_i$ whenever $j \in I_i$ for some $1 \leq i \leq k$, and set $u_j = d_j$ for all $j \in I_{k+1}$. We then define $u = (u_1, \ldots, u_n)$.

Next, let $\{u_{\gamma}\}_{\gamma \in \varGamma}$ be a net where, for each $\gamma$, $u_{\gamma} = (u^{\gamma}_1, \ldots, u^{\gamma}_n)$, and $u^{\gamma}_j = x^{\gamma}_i$ whenever $j \in I_i$ for some $1 \leq i \leq k$, with $u^{\gamma}_j = d_j$ for all $j \in I_{k+1}$. Clearly, $\{u_{\gamma}\}_{\gamma \in \varGamma}$ converges to $u$.

Since for every $\gamma \in \varGamma$ we have $u_{\gamma} \in A$, $v_{\gamma} \in B$, and $G_{\alpha_\gamma}^n \cap V = \{v_\gamma\}$, it follows that
\[|\nu_{u_{\gamma}}(V)| \geq |\nu_{u_{\gamma}}(\{v_{\gamma}\})| - |\nu_{u_{\gamma}}(V \setminus \{v_{\gamma}\})| \geq |\nu_{u_{\gamma}}|(\{v_{\gamma}\}) - |\nu_{u_{\gamma}}|(K^n \setminus G_{\alpha_{\gamma}}^n) \geq \epsilon - \frac{\epsilon}{2} = \frac{\epsilon}{2}.\]
Hence, recalling that $\nu$ is weak$^*$-continuous and that $V$ is a clopen, compact subset of $L^n$, we obtain
\[
|\nu_{u}(V)| = \lim_{\gamma \to \infty} |\nu_{u_{\gamma}}(V)| \geq \frac{\epsilon}{2}.
\]
However, since $u$ is an element of $\partial(K^n)$, we have $\nu_{u}(V) = 0$. This is a contradiction.
\end{proof}

\begin{lemma}\label{Lemma-Selection}
Let $n \geq 1$ be an integer, and let $\nu: K^n \to M_0(K^n)$ be a weak$^*$ continuous function that vanishes at infinity. Let $A$ and $B$ be uncountable subsets of $L^n$ such that the points of $A$ are pairwise disjoint and for all $y \in A$ and $z \in B$ it holds that $\mathrm{supp}(y) \cap \mathrm{supp}(z) = \emptyset$. Then, for every $\epsilon > 0$, there exist sequences $\{y_{\alpha}\}_{\alpha < \omega_1}$ and $\{z_{\alpha}\}_{\alpha < \omega_1}$ of pairwise distinct points of $A$ and $B$ respectively, and also sequences $\{G_{\alpha}\}_{\alpha < \omega_1}$ and $\{H_{\alpha}\}_{\alpha < \omega_1}$ of finite subsets of $L$ satisfying the following conditions:
\begin{itemize}
    \item[(1)] $\mathrm{supp}(y_\alpha) \subseteq G_{\alpha} \setminus H_{\alpha}$, $\mathrm{supp}(z_{\alpha}) \subseteq H_{\alpha} \setminus G_{\alpha}$,
    \item[(2)] $|\nu_{y_{\alpha}}|\left(K^n \setminus G_{\alpha}^n\right) < \frac{\epsilon}{3}$, $|\nu_{z_{\alpha}}|\left(K^n \setminus H_{\alpha}^n\right) < \frac{\epsilon}{3}$.
\end{itemize}
\end{lemma}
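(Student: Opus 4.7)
The plan is to reduce the selection to a combinatorial $\Delta$-system problem by attaching a uniformly finite ``obstruction set'' to each $y \in A$ (and $z \in B$), and then to eliminate the only remaining obstruction via Theorem \ref{Thm:vanish-B} and the collapsing property. For each $y \in A$, set $m_p(y) := |\nu_y|(\{w \in L^n : p \in \mathrm{supp}(w)\})$ and $D^*_y := \{p \in L : m_p(y) \geq \epsilon/(3n)\}$. Since $\sum_{p} m_p(y) \leq n\|\nu_y\| \leq n\|T\|$, the cardinality of $D^*_y$ is bounded uniformly in $y$; define $D^*_z$ symmetrically. The key elementary observation is that whenever $D^*_y \cap \mathrm{supp}(z) = \emptyset$,
\[ |\nu_y|\bigl(\{w \in L^n : \mathrm{supp}(w) \cap \mathrm{supp}(z) \neq \emptyset\}\bigr) \leq \sum_{p \in \mathrm{supp}(z)} m_p(y) < n \cdot \tfrac{\epsilon}{3n} = \tfrac{\epsilon}{3}, \]
so one can produce a finite $G_\alpha \subseteq L \setminus \mathrm{supp}(z)$ containing $\mathrm{supp}(y)$ with $|\nu_y|(K^n \setminus G_\alpha^n) < \epsilon/3$ by greedily adding atoms of $\nu_y$ whose supports avoid $\mathrm{supp}(z)$; a symmetric construction yields $H_\alpha$ whenever $D^*_z \cap \mathrm{supp}(y) = \emptyset$. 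Thus the task reduces to producing uncountably many pairs satisfying both disjointness conditions.

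I would next apply the $\Delta$-system lemma to $\{\mathrm{supp}(y) \cup D^*_y : y \in A\}$ and $\{\mathrm{supp}(z) \cup D^*_z : z \in B\}$ (both of bounded cardinality), obtaining roots $R$ and $R'$, and also to $\{\mathrm{supp}(z) : z \in B\}$ with root $R''$. Pairwise disjointness of $A$-points lets me remove finitely many $y$ to arrange $\mathrm{supp}(y) \cap (R \cup R') = \emptyset$, and a further pigeonhole removes countably many $z$ meeting $R$ outside $R''$. The remaining combinatorial obstruction is whether $D^*_y \cap R'' = \emptyset$ holds on an uncountable subset of $A$, which is equivalent to the following key claim: for every $p \in R''$, the set $\mathcal{Y}_p := \{y \in A : m_p(y) \geq \epsilon/(3n)\}$ is countable. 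Note that $p \in R''$ forces $p \in \mathrm{supp}(z)$ for some (in fact uncountably many) $z \in B$, so the cross-disjointness hypothesis yields $p \notin \mathrm{supp}(y)$ for every $y \in A$.

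To prove the claim, assume $\mathcal{Y}_p$ uncountable and pick, for each $y \in \mathcal{Y}_p$, an atom $w^y$ of $\nu_y$ with $p \in \mathrm{supp}(w^y)$. I would split by whether the ``non-covering'' atoms (those with $\mathrm{supp}(y) \not\subseteq \mathrm{supp}(w)$) carry at least half of the mass $m_p(y)$. In the non-covering case, pick $w^y$ non-covering; then $\mathrm{supp}(y) \setminus \mathrm{supp}(w^y) \neq \emptyset$ and the $\Delta$-root of $\{\mathrm{supp}(y)\}$ is empty, so Theorem \ref{Thm:vanish-B} forces $\nu_y(\{w^y\}) = 0$ on a cocountable set of $y$, contradicting the choice of $w^y$. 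In the covering case, a further $\Delta$-system thinning on $\{\mathrm{supp}(w^y)\}$ and a pigeonhole on the ``shape'' of $w^y$ relative to $\mathrm{supp}(y)$ and its root reduces to a situation where $w^y$ is a fixed function of $\mathrm{supp}(y)$; then the collapsing property of $L$, applied to the $(\{1\}, \ldots, \{k\})$-separated family of distinct coordinates of $\mathrm{supp}(y_\alpha)$, yields an accumulation point of $(y_\alpha, w^{y_\alpha})$ whose $y$-block lies in $\partial K^n$, and a clopen-neighborhood argument in the style of the concluding step of the proof of Theorem \ref{Thm:vanish-B}---exploiting weak$^*$ continuity of $\nu$ and $\nu_{y^*}(V) = 0$ at the boundary limit---delivers the contradiction.

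Once the claim is in place, $\bigcup_{p \in R''} \mathcal{Y}_p$ is countable and can be removed from $A$; the pairwise disjointness of the leaves then ensures that for each fixed $y$ only countably many $z$ violate $D^*_y \cap \mathrm{supp}(z) = \emptyset$, and symmetrically for $D^*_z \cap \mathrm{supp}(y)$. A routine transfinite induction selects $(y_\alpha, z_\alpha)$ at stage $\alpha$ by avoiding the previously used ordinals together with the countably many incompatible choices, and the sets $G_\alpha, H_\alpha$ are then produced by the recipe of the first paragraph. The main obstacle I anticipate is executing the collapsing step of the covering case rigorously, since the individual masses $|\nu_y(\{w^y\})|$ need not admit a uniform lower bound; careful bookkeeping of the witness atoms and a sub-pigeonhole are likely needed to extract a net along which weak$^*$ continuity can be applied.
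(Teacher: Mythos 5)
Your overall scaffold is sound and, in spirit, quite close to the paper's own proof: both arguments rest on atomicity of Radon measures on scattered spaces, the $\varDelta$-System Lemma, an appeal to Theorem \ref{Thm:vanish-B} to kill the mass sitting over the root of the $z$-side system, and a transfinite selection of the pairs. Your quantitative device (the heavy-point sets $D^*_y$, with the uniform bound coming from $\sup_x\|\nu_x\|<\infty$ via Theorem \ref{Thm:OperatorCharacterization}) is a legitimate repackaging, and your Case A is correct: the pairwise disjointness of $A$ makes the root of $\{\mathrm{supp}(y)\}$ empty, so Theorem \ref{Thm:vanish-B} applies verbatim to the non-covering witnesses. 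Your handling of the $z$-side, where for each fixed $y$ only finitely many $z$ violate $D^*_z\cap\mathrm{supp}(y)=\emptyset$ by petal-disjointness, is a clean substitute for the paper's $\epsilon/(6(2^n-1))$ bookkeeping; the paper instead chooses $z_\beta$ \emph{after} $y_\beta$ so that $E_{z_\beta}\cap\mathrm{supp}(y_\beta)=\emptyset$, which makes the corresponding estimate immediate.

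The genuine gap is your Case B, and it is more serious than the bookkeeping obstacle you flag at the end: the mechanism you propose cannot work even under the favorable assumption of a single witness atom with a uniform mass bound. If $w^y$ covers $\mathrm{supp}(y)$, then every coordinate of the evaluation point $y$ also occurs as a coordinate of $w^y$. The endgame of Theorem \ref{Thm:vanish-B} requires a net along which the evaluation points converge to a point of $\partial(K^n)$ (where $\nu$ vanishes) while the witness atoms converge \emph{inside} a fixed clopen compact $V\subseteq L^n$, so that weak$^*$ continuity against $\chi_V$ preserves a mass lower bound. In the covering configuration these two requirements are incompatible: if some coordinate of $y_{\alpha_\gamma}$ tends to $\infty$, the same coordinate of $w^{y_{\alpha_\gamma}}$ does too, so the atoms eventually leave every compact subset of $L^n$ and no such $V$ exists; while if the atoms stay in a compact window, the points $y_{\alpha_\gamma}$ accumulate in $L^n$, where $\nu$ need not vanish and no contradiction arises. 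This is exactly why Theorem \ref{Thm:vanish-B} carries the hypothesis $\mathrm{supp}(y_\alpha)\setminus(\mathrm{supp}(z_\alpha)\cup R_A)\neq\emptyset$: your Case B is precisely the excluded configuration, so ``the style of the concluding step of Theorem \ref{Thm:vanish-B}'' is not available to you. Note where the case actually lives: a covering atom needs $\mathrm{supp}(w^y)\supseteq\mathrm{supp}(y)\cup\{p\}$ with $p\notin\mathrm{supp}(y)$, forcing $|\mathrm{supp}(y)|\leq n-1$; hence for points of $A$ with $n$ distinct coordinates Case B is vacuous --- this cardinality observation is what underlies the paper's partition of $(F_y\cup R)^n$ into $F_y^n$ and pieces on which $\mathrm{supp}(y)\setminus\mathrm{supp}(w)\neq\emptyset$. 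But since the lemma must serve families meeting the generalized diagonal (it is applied in Theorem \ref{Thm:vanish-A} to families with pattern $k<n$), you cannot simply discard degenerate supports, and your key claim that $\mathcal{Y}_p$ is countable remains unproved exactly there. To close the gap you would need a genuinely different argument for covering atoms (your sub-pigeonhole on shapes reduces, at best, to the single-witness situation, which the obstruction above already defeats), not merely more careful bookkeeping.
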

\begin{proof}
Fix $\epsilon > 0$. Since Radon measures on scattered spaces are atomic, for each $z \in B$, we can fix a finite set $E_{z} \subseteq K$ such that $\mathrm{supp}(z) \subset E_{z}$ and
\[
|\nu_{z}|(K^n \setminus E_{z}^n) < \frac{\epsilon}{6 (2^n - 1)}.
\]

According to the $\varDelta$-System Lemma, we may assume that $\{E_{z} : z \in B\}$ forms an uncountable $\varDelta$-system, where all sets $E_z$ have the same cardinality, with root $R$. Since the points of $A$ are pairwise disjoint, by reducing $A$ to an uncountable subset if necessary, we may assume that for each $y \in A$, there is a finite subset $F_y \subset L$ such that $\mathrm{supp}(y) \subset F_y$, $F_y \cap R = \emptyset$, and
\[
|\nu_{y}|(K^n \setminus (F_y \cup R)^n) < \frac{\epsilon}{3}.
\]

Now, observe that the product $(F_y \cup R)^n$ can be written as the union of pairwise disjoint sets $N_i(y)$, $0 \leq i \leq 2^n - 1$, where $N_0(y) = F_y^n$ and $\mathrm{supp}(y)\setminus \mathrm{supp}(w)\neq \emptyset$ for all $w \in N_i(y)$ for each $1 \leq i \leq 2^n - 1$. From Theorem \ref{Thm:vanish-B}, we may reduce $A$ to an uncountable subset and assume that $|\nu_{y}|(N_i(y)) = 0$ for all $y \in A$ and all $1 \leq i \leq 2^n - 1$. Hence, for each $y \in A$, the following holds:
\[
|\nu_{y}|(K^n \setminus F_y^n) < \frac{\epsilon}{3}.
\]

Given $\beta < \omega_1$, suppose that for each $\alpha < \beta$ we have obtained sequences $\{y_{\alpha}\}_{\alpha < \beta}$, $\{z_{\alpha}\}_{\alpha < \beta}$, $\{G_{\alpha}\}_{\alpha < \beta}$, $\{H_{\alpha}\}_{\alpha < \beta}$ satisfying items (1) and (2) above. Let $y_\beta$ be any point of $A \setminus \{y_\alpha : \alpha < \beta\}$ and define $G_\beta = F_{y_\beta}$. Since $\mathrm{supp}(y_{\beta}) \cap R = \emptyset$, the set $B' = \{z \in B : \mathrm{supp}(y_{\beta}) \cap E_{z} \neq \emptyset\}$ has cardinality at most $n$. We pick $z_{\beta} \in B \setminus (\{z_\alpha : \alpha < \beta\} \cup B')$ such that $\mathrm{supp}(z_\beta) \cap G_{\beta} = \emptyset$ and let $H_{\beta} \subset K$ be a finite set such that $\mathrm{supp}(z_{\beta}) \subset H_{\beta}$, $H_{\beta} \cap \mathrm{supp}(y_{\beta}) = \emptyset$, and
\[
|\nu_{z_{\beta}}|\left(K^n \setminus (H_{\beta} \cup \mathrm{supp}(y_{\beta}))^n\right) < \frac{\epsilon}{6}.
\]

Denoting $M_0 = H_{\beta}^n$, since $\mathrm{supp}(y_{\beta}) \cap (H_{\beta} \cup E_{z_{\beta}}) = \emptyset$, the product $(H_{\beta} \cup \mathrm{supp}(y_{\beta}))^n$ can be written as the union of pairwise disjoint sets $M_i$, $0 \leq i \leq 2^n - 1$, such that $M_i \cap E^n_{z_{\beta}} = \emptyset$ for $1 \leq i \leq 2^n - 1$. We have
\[
|\nu_{z_{\beta}}|(K^n \setminus H_{\beta}^n) < \frac{\epsilon}{6} + \sum_{i=1}^{2^n - 1} |\nu_{z_{\beta}}|(M_i) < \frac{\epsilon}{6} + (2^n - 1) \frac{\epsilon}{6(2^n - 1)} = \frac{\epsilon}{3},
\]
which completes the construction of the sequences.
\end{proof}

\begin{theorem}\label{Thm:vanish-A}
Let $n \geq 1$ be an integer, and let $\nu: K^n \to M_0(K^n)$ be a weak$^*$ continuous function that vanishes at infinity. Suppose $\mathcal{A} \subset L^n$ is an uncountable set consisting of pairwise disjoint points. Moreover, assume that there exists a partition $I_1, \ldots, I_k$ of $\{1, \ldots, n\}$ such that for every $y = (y_1, \ldots, y_n) \in \mathcal{A}$, the equality $y_s = y_t$ holds if and only if $s, t \in I_i$ for some $1 \leq i \leq k$. Then, there exist a real number $r \in \mathbb{R}$ and a countable subset $\mathcal{A}_0 \subset \mathcal{A}$ such that $\nu_{y}(\{y\}) = r$ for all $y \in \mathcal{A} \setminus \mathcal{A}_0$. 
\end{theorem}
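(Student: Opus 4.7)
My plan is to argue by contradiction, using the collapsing property of $L$ to force a net of pairs drawn from both ``sides'' of a purported gap to share a common limit in $L^n$, and then to invoke weak$^*$ continuity on a carefully chosen clopen compact set. Put $f(y) = \nu_y(\{y\})$ and suppose no real number $r$ satisfies the conclusion. Let
\[
a = \sup\{r \in \mathbb{R} : \{y \in \mathcal{A} : f(y) \le r\}\text{ is countable}\}, \quad
b = \inf\{r \in \mathbb{R} : \{y \in \mathcal{A} : f(y) \ge r\}\text{ is countable}\}.
\]
If $a = b$, then $\{y \in \mathcal{A} : f(y) \ne a\} = \bigcup_n (\{f \le a - 1/n\} \cup \{f \ge a + 1/n\})$ would be countable and $r = a$ would witness the conclusion; so $a < b$, and fixing any $a < a' < b' < b$ produces uncountable disjoint sets $A = \{y \in \mathcal{A} : f(y) \le a'\}$ and $B = \{y \in \mathcal{A} : f(y) \ge b'\}$. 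Fix $\epsilon$ with $0 < \epsilon < (b' - a')/2$.

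Next, I apply Lemma \ref{Lemma-Selection} to $A$, $B$, $\epsilon$ to obtain sequences $\{y_\alpha\}, \{z_\alpha\}$ and finite sets $G_\alpha, H_\alpha \subseteq L$ satisfying the listed support conditions and tail estimates. Crucially, I enlarge each $G_\alpha$ by appending an auxiliary point $w_\alpha \in L$ chosen outside $\mathrm{supp}(y_\alpha) \cup H_\alpha$ and disjoint from the previously selected auxiliary points; the tail estimate is preserved and $G_\alpha$ now properly contains $\mathrm{supp}(y_\alpha)$. Applying the $\varDelta$-system lemma to $\{G_\alpha \cup H_\alpha\}_\alpha$ and discarding countably many indices, I may assume that $(G_\alpha \cup H_\alpha) \setminus R$ are pairwise disjoint for a fixed root $R$ with $R \cap (\mathrm{supp}(y_\alpha) \cup \mathrm{supp}(z_\alpha)) = \emptyset$ for every $\alpha$.

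Write $\mathrm{supp}(y_\alpha) = \{p_1^\alpha, \ldots, p_k^\alpha\}$ with $y_i^\alpha = p_j^\alpha$ for $i \in I_j$, similarly $\mathrm{supp}(z_\alpha) = \{q_1^\alpha, \ldots, q_k^\alpha\}$ with $z_i^\alpha = q_j^\alpha$ for $i \in I_j$, and enumerate the points of $(G_\alpha \cup H_\alpha) \setminus (R \cup \mathrm{supp}(y_\alpha) \cup \mathrm{supp}(z_\alpha))$ as $e_1^\alpha, \ldots, e_N^\alpha$ (with $N \ge 1$ thanks to the padding). Consider the tuple
\[
\xi_\alpha = (p_1^\alpha, \ldots, p_k^\alpha, q_1^\alpha, \ldots, q_k^\alpha, e_1^\alpha, \ldots, e_N^\alpha) \in L^{2k + N}
\]
equipped with the partition whose first $k$ classes are the pairs $\{j, k+j\}$ (pairing the $j$-th position of $y$ with that of $z$) and whose remaining $N$ classes are the singletons $\{2k + i\}$, $1 \le i \le N$. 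The family $\{\xi_\alpha\}$ is separated in the sense of Definition \ref{Def:Collapsing}, so by the collapsing property of $L$ there is a subnet $\{\xi_{\alpha_\gamma}\}$ with an accumulation point of the prescribed form: pairwise distinct $a_1, \ldots, a_{k + N - 1} \in L$ such that $p_j^{\alpha_\gamma}, q_j^{\alpha_\gamma} \to a_j$ for $1 \le j \le k$, while the single coordinate corresponding to the final extra escapes to $\infty$. Defining $q \in L^n$ by $q_i = a_j$ for $i \in I_j$, both $y_{\alpha_\gamma}$ and $z_{\alpha_\gamma}$ converge to $q$ in $K^n$.

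To conclude, I choose pairwise disjoint clopen compact neighborhoods $W_j \subseteq L$ of the points $a_j$, for $1 \le j \le k$, small enough to exclude $a_{k+1}, \ldots, a_{k + N - 1}$ and to be disjoint from $R$, and I set $V_i = W_j$ whenever $i \in I_j$ and $V = V_1 \times \cdots \times V_n$. Then $V$ is clopen and compact in $L^n$, contains $q$, and for all sufficiently large $\gamma$ the only element of $G_{\alpha_\gamma}$ in $W_j$ is $p_j^{\alpha_\gamma}$ and the only element of $H_{\alpha_\gamma}$ in $W_j$ is $q_j^{\alpha_\gamma}$; hence $V \cap G_{\alpha_\gamma}^n = \{y_{\alpha_\gamma}\}$ and $V \cap H_{\alpha_\gamma}^n = \{z_{\alpha_\gamma}\}$. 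Combined with the tail estimates,
\[
|\nu_{y_{\alpha_\gamma}}(V) - \nu_{y_{\alpha_\gamma}}(\{y_{\alpha_\gamma}\})| < \epsilon/3, \qquad |\nu_{z_{\alpha_\gamma}}(V) - \nu_{z_{\alpha_\gamma}}(\{z_{\alpha_\gamma}\})| < \epsilon/3,
\]
and since $\chi_V \in C_0(L^n)$, weak$^*$ continuity of $\nu$ yields $\nu_{y_{\alpha_\gamma}}(V), \nu_{z_{\alpha_\gamma}}(V) \to \nu_q(V)$. Passing to the limit produces $b' - \epsilon/3 \le \nu_q(V) \le a' + \epsilon/3$, so $b' - a' \le 2\epsilon/3 < b' - a'$, a contradiction. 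The main obstacle, and the reason for padding $G_\alpha$ by an auxiliary point, is arranging the combinatorics so that the $\infty$-class forced by the collapsing property is absorbed by an extra coordinate rather than by a position of $y_\alpha$ or $z_\alpha$; without this, the common limit $q$ would lie on $\partial K^n$, forcing $\nu_q = 0$ and collapsing the lower bound $\nu_q(V) \ge b' - \epsilon/3$ that drives the contradiction.
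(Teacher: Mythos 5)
Your proposal reproduces the paper's own argument almost step for step: reduce to two uncountable sets $A,B\subset\mathcal{A}$ whose diagonal values are separated by a definite gap (the paper cites \cite{candido2} for this; your sup/inf dichotomy is a correct elementary substitute), apply Lemma \ref{Lemma-Selection}, pass to a $\varDelta$-system with root $R$, form a separated family that pairs the $j$-th distinct coordinate of $y_\alpha$ with that of $z_\alpha$ and relegates the leftover points to the class that the collapsing property sends to $\infty$, and finish by testing $\nu$ against a clopen compact $V$ on which weak$^*$ continuity forces the two incompatible limits. Your up-front padding of each $G_\alpha$ by an auxiliary point is precisely the device of Remark \ref{Rem:Singleton}; the paper instead assumes in its main line that leftover points exist and defers the degenerate case to ``minor adjustments,'' so making the padding uniform is a harmless (even tidier) reorganization.

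There is, however, one step that can fail as written. You choose the clopen compact neighborhoods $W_j\ni a_j$ ``disjoint from $R$,'' but nothing in your argument guarantees that $a_j\notin R$: the moving coordinates $p_j^{\alpha_\gamma}$, $q_j^{\alpha_\gamma}$ avoid the root, but the accumulation point supplied by the collapsing property may perfectly well have some $a_j$ equal to a point of the fixed finite set $R$. If $a_j\in R$, then (passing to an uncountable subfamily) $a_j\in G_{\alpha_\gamma}$ for every $\gamma$, so every neighborhood of $a_j$ meets $G_{\alpha_\gamma}$ in at least $\{p_j^{\alpha_\gamma},a_j\}$; the identity $V\cap G_{\alpha_\gamma}^n=\{y_{\alpha_\gamma}\}$ then fails, and the tail estimate $|\nu_{y_{\alpha_\gamma}}|(K^n\setminus G_{\alpha_\gamma}^n)<\epsilon/3$ no longer bounds $|\nu_{y_{\alpha_\gamma}}(V)-\nu_{y_{\alpha_\gamma}}(\{y_{\alpha_\gamma}\})|$, since $V\setminus\{y_{\alpha_\gamma}\}$ now contains points of $G_{\alpha_\gamma}^n$ on which $\nu_{y_{\alpha_\gamma}}$ may carry mass. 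The paper forecloses exactly this: it fixes $\xi<\omega_1$ with $R\subset L_\xi$ and discards countably many indices so that the separated family lies in $(L\setminus L_\xi)^{2k+N}$; since $L_\xi$ is clopen, every coordinate of any accumulation point lies in $(L\setminus L_\xi)\cup\{\infty\}$, whence $a_1,\ldots,a_k\in L\setminus R$. Inserting this one reduction (which also retroactively justifies discarding the finitely many $\alpha$ with $w_\alpha\in R$, so that your padding really contributes an extra class) makes your proof complete; everything else checks out.
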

\begin{proof}
Towards a contradiction, assume that for every $r \in\mathbb{R}$ and for every countable set $\mathcal{A}_0\subset \mathcal{A}$, there is  $y \in \mathcal{A}\setminus \mathcal{A}_{0}$ 
such that $\nu_{y}(\{y\})\neq r$. Then, according to \cite[Lemma 4.6]{candido2} there are rational numbers $p<q$ and uncountable sets $A,\ B\subset \mathcal{A}$ such that whenever $y \in A$ and $z \in B$ holds: \[\nu_{y}(\{y\})<p<q<\nu_{z}(\{z\}).\]

Since, for all $y \in A$ and $z \in B$, we have $\mathrm{supp}(y) \cap \mathrm{supp}(z) = \emptyset$, we may apply Lemma \ref{Lemma-Selection} and obtain uncountable sequences of points $\{y_\alpha\}_{\alpha < \omega_1} \subset A$ and $\{z_\alpha\}_{\alpha < \omega_1} \subset B$, as well as sequences $\{G_\alpha\}_{\alpha < \omega_1}$ and $\{H_\alpha\}_{\alpha < \omega_1}$ of finite subsets of $L$ satisfying:
\begin{itemize}
\item[(1)] $\mathrm{supp}(y_{\alpha})\subseteq G_{\alpha}\setminus H_{\alpha}$, $\mathrm{supp}(z_{\alpha})\subseteq H_{\alpha}\setminus G_{\alpha}$.
\item[(2)] $|\nu_{y_{\alpha}}|\left(K^n\setminus G_{\alpha}^n\right)<\frac{q-p}{3}$, $|\nu_{z_{\alpha}}|\left(K^n\setminus H_{\alpha}^n\right)<\frac{q-p}{3}$.
\end{itemize}

From the $\varDelta$-System Lemma, we can assume that $\{G_\alpha \cup H_\alpha : \alpha < \omega_1\}$ forms a $\varDelta$-system with root $R$, where all sets $G_\alpha \cup H_\alpha$ have the same cardinality. For each $\alpha < \omega_1$, let $G'_\alpha = G_\alpha \setminus R$ and $H'_\alpha = H_\alpha \setminus R$, and assume that $|G'_\alpha| = l$ and $|H'_\alpha| = m$. We further assume that either $l > 1$ or $m > 1$, as otherwise the proof can be completed with minor adjustments (see Remark \ref{Rem:Singleton}). If $y_\alpha = (y_1^\alpha, \ldots, y_n^\alpha)$ and $z_\alpha = (z_1^\alpha, \ldots, z_n^\alpha)$, denote
$G^\prime_\alpha = \{x_1^\alpha, \ldots, x_l^\alpha\}$ and $H^\prime_\alpha = \{x_{l+1}^\alpha, \ldots, x_{l+m}^\alpha\}$, where $x_i^\alpha = y_j^\alpha$ 
and $x_{l+i}^\alpha = z_j^\alpha$ if and only if $j \in I_i$ for $1 \leq i \leq k$ and $1 \leq j \leq n$.

The collection $\mathcal{C} = \{(x_1^\alpha, \ldots, x_{m+l}^\alpha) : \alpha < \omega_1\} \subset L^{m+l}$, constructed from the sets $G^\prime_\alpha$ and $H^\prime_\alpha$, forms an uncountable $(\{1, l+1\}, \ldots, \{k, l+k\}, \{k+2, \ldots, l, l+k+1, \ldots, l+m\})$-separated subset of $L^{m+l}$. We can fix an ordinal $\xi < \omega_1$ such that $R \subset L_\xi$. By removing countably many elements from $\mathcal{C}$ if necessary, we can assume that $\mathcal{C} \subset (L \setminus L_\xi)^{m+l}$. Since $L$ possesses the collapsing property, there exist distinct points $a_1, \ldots, a_k \in L \setminus R$ such that $b = (b_1, \ldots, b_{l+m})$, where $b_j = a_i$ if $j \in \{i, l+i\}$ for $1 \leq i \leq k$ and $b_j = \infty$ otherwise, is an accumulation point of $\mathcal{C}$. We then fix a net $\{(x_1^{\alpha_\gamma}, \ldots, x_{m+l}^{\alpha_\gamma})\}_{\gamma \in \Gamma}$ in $\mathcal{C}$ that converges to the point $b$.

Next, we select pairwise disjoint clopen compact neighborhoods $U_1, \ldots, U_k$ around $a_1, \ldots, a_k$, respectively, ensuring that $(U_1 \cup \ldots \cup U_k) \cap R = \emptyset$. By passing to a subnet if necessary, we can assume that $U_i \cap G_{\alpha_{\gamma}} = \{ x^{\alpha_{\gamma}}_{i} \}$ and $U_i \cap H_{\alpha_{\gamma}} = \{ x^{\alpha_{\gamma}}_{l+i} \}$ for all $1 \leq i \leq k$ and $\gamma \in \varGamma$.

For each $1 \leq j \leq n$, we then set $V_j = U_i$ and $w_j = a_i$ whenever $j \in I_i$ for some $1 \leq i \leq k$. Finally, we define $V=V_1 \times \ldots \times V_n$ and fix the point $w = (w_1, \ldots, w_n)$.

Let $\{u_{\gamma}\}_{\gamma \in \varGamma}$ be the net where, for each $\gamma$, $u_{\gamma} = (u^{\gamma}_1, \ldots, u^{\gamma}_n)$ is such that $u^{\gamma}_j = x^{\alpha_{\gamma}}_i$ if $j \in I_i$ for some $1 \leq i \leq k$. Similarly, let $\{v_{\gamma}\}_{\gamma \in \varGamma}$ be the net where, for each $\gamma$, $v_{\gamma} = (v^{\gamma}_1, \ldots, v^{\gamma}_n)$ is such that $v^{\gamma}_j = x^{\alpha_{\gamma}}_{l+i}$ if $j \in I_i$ for some $1 \leq i \leq k$. It is straightforward to verify that $G_{\alpha_{\gamma}}^n \cap V = \{u_\gamma\}$ and $H_{\alpha_{\gamma}}^n \cap V = \{v_\gamma\}$ for each $\gamma \in \varGamma$, and that both nets converge to $w$. Since $u_{\gamma} \in A$ and $v_{\gamma} \in B$ for each $\gamma \in \varGamma$, we can write:
\begin{align*}
\nu_{u_{\gamma}}(V)&=|\nu_{u_{\gamma}}|(\{u_{\gamma}\})+|\nu_{u_{\gamma}}|(V\setminus \{u_{\gamma}\})\\
&\leq |\nu_{u_{\gamma}}|(\{u_{\gamma}\})+|\nu_{u_{\gamma}}|(K^n\setminus G_{\alpha_{\gamma}}^n)<p+\frac{q-p}{3}=\frac{2p+q}{3},
\end{align*}
\begin{align*}
\nu_{v_{\gamma}}(V)&=\nu_{v_{\gamma}}(\{v_{\gamma}\})-|\nu_{v_{\gamma}}|(V\setminus \{v_{\gamma}\})\\
&\geq \nu_{v_{\gamma}}(\{v_{\gamma}\})-|\nu_{v_{\gamma}}|(K^n\setminus H_{\alpha_{\gamma}}^n)>q-\frac{q-p}{3}=\frac{2q+p}{3}.
\end{align*}

Recalling that $\nu$ is a weak$^*$-continuous function and $V$ is a clopen compact subset of $L^n$, the above relations imply that
\begin{align*}
\nu_w(V) &= \lim_{\gamma\to \infty}\nu_{u_\gamma}(V) < \frac{2p+q}{3}< \frac{2q+p}{3} < \lim_{\gamma \to \infty}\nu_{v_\gamma}(V) = \nu_w(V),
\end{align*}
which is a contradiction.
\end{proof}

\begin{corollary}\label{Cor:vanish-A}
Let $n \geq 1$ be an integer, and let $\nu: K^n \to M_0(K^n)$ be a weak$^*$ continuous function that vanishes at infinity. There exists $r \in \mathbb{R}$ and $\rho<\omega_1$ such that $\nu_y(\{y\}) = r$ for all $y \in (L\setminus L_\rho)^n\setminus \varDelta(K^n)$.
\end{corollary}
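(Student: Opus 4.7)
My plan is to reduce to Theorem \ref{Thm:vanish-A}. For any $y\in L^n\setminus\varDelta(K^n)$ the coordinates of $y$ are by definition pairwise distinct, so the partition appearing in that theorem is forced to be $(\{1\},\{2\},\ldots,\{n\})$, and any uncountable family of pairwise disjoint points in $L^n\setminus\varDelta(K^n)$ automatically satisfies its hypotheses. Since $L$ is uncountable and every countable subset of $L$ is contained in some $L_\rho$, such uncountable pairwise disjoint families certainly exist, built by a routine transfinite recursion.

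First I would show that the real number produced by Theorem \ref{Thm:vanish-A} is independent of the chosen family. Given two uncountable pairwise disjoint families $\mathcal{A}_1,\mathcal{A}_2\subset L^n\setminus\varDelta(K^n)$ with associated constants $r_1,r_2$, I interleave them into a single uncountable pairwise disjoint family $\mathcal{B}\subset\mathcal{A}_1\cup\mathcal{A}_2$ containing uncountably many members of each: at stage $\alpha<\omega_1$ only countably many supports have been committed, and since each $\mathcal{A}_i$ is pairwise disjoint, each point of $L$ lies in the support of at most one element of $\mathcal{A}_i$; hence only countably many elements of each $\mathcal{A}_i$ are excluded at stage $\alpha$ and a new element can be chosen from each. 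Applying Theorem \ref{Thm:vanish-A} to $\mathcal{B}$ yields a constant $r$, and a pigeonhole argument (each of $r_1,r_2$ is attained on uncountably many elements of $\mathcal{B}\cap\mathcal{A}_i$, while the conclusion of the theorem allows only countably many exceptions) forces $r_1=r=r_2$.

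Let $r$ denote this common value. To finish, I argue by contradiction: suppose that for every $\rho<\omega_1$ there exists some $y\in (L\setminus L_\rho)^n\setminus\varDelta(K^n)$ with $\nu_y(\{y\})\neq r$. I construct by transfinite recursion an uncountable pairwise disjoint family $\{y_\alpha:\alpha<\omega_1\}\subset L^n\setminus\varDelta(K^n)$ satisfying $\nu_{y_\alpha}(\{y_\alpha\})\neq r$ for every $\alpha$: at stage $\alpha$ the countable set $\bigcup_{\beta<\alpha}\mathrm{supp}(y_\beta)$ lies inside some $L_{\rho_\alpha}$, and by hypothesis one selects $y_\alpha\in(L\setminus L_{\rho_\alpha})^n\setminus\varDelta(K^n)$ with $\nu_{y_\alpha}(\{y_\alpha\})\neq r$, which automatically has support disjoint from the previous supports. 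Theorem \ref{Thm:vanish-A} applied to this family produces a constant $r'$ equal to $\nu_{y_\alpha}(\{y_\alpha\})$ for all but countably many $\alpha$, so $r'\neq r$; on the other hand the uniqueness established in the previous paragraph forces $r'=r$, which is the desired contradiction. The only real work is the bookkeeping in the two transfinite recursions and the interleaving step, and neither presents any serious obstacle once one uses pairwise disjointness of supports together with the fact that every countable subset of $L$ is absorbed by some $L_\rho$.
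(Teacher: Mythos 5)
Your proof is correct and follows essentially the same route as the paper: both deduce the corollary from Theorem \ref{Thm:vanish-A} by a transfinite recursion that uses the absorption of countable sets into the clopen sets $L_\rho$ to build an uncountable pairwise disjoint family in $L^n \setminus \varDelta(K^n)$ witnessing a contradiction. The paper merely shortcuts your two-step argument (canonical $r$ via interleaving, then a family avoiding $r$) by constructing a single family in which $\nu_{y_\alpha}(\{y_\alpha\}) \neq \nu_{y_{\alpha+1}}(\{y_{\alpha+1}\})$ at every successor stage, which contradicts the at-most-countably-many-exceptions conclusion of Theorem \ref{Thm:vanish-A} directly, with no need for your uniqueness step.
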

\begin{proof}
Assume, for the sake of contradiction, that the statement is false. Then for every $r \in \mathbb{R}$ and every $\alpha < \omega_1$, there exists a point $y \in (L \setminus L_\alpha)^n \setminus \varDelta(K^n)$ such that $\nu_y(\{y\}) \neq r$. Let $\beta < \omega_1$ be given. Suppose we have already constructed a set $\{y_{\alpha} : \alpha < \beta\} \subseteq (L \setminus L_\rho)^n \setminus \varDelta(K^n)$. Choose $\gamma_{\beta} < \omega_1$ such that $\{y_{\alpha} : \alpha < \beta\} \subseteq L_{\gamma_{\beta}}^n \setminus \varDelta(K^n)$. If $\beta$ is a successor ordinal, i.e., $\beta = \eta + 1$, select $y_{\beta}$ in $(L \setminus L_{\gamma_{\beta}})^n \setminus \varDelta(K^n)$ such that $\nu_{y_{\beta}}(\{y_{\beta}\}) \neq \nu_{y_{\eta}}(\{y_{\eta}\})$. If $\beta$ is not a successor ordinal, choose $y_{\beta}$ to be any point in $(L \setminus L_{\gamma_{\beta}})^n \setminus \varDelta(K^n)$.

By induction, we have constructed a collection $\mathcal{A} = \{y_{\alpha} : \alpha < \omega_1\}$ of pairwise disjoint points in $L^n \setminus \varDelta(K^n)$. This contradicts Theorem \ref{Thm:vanish-A}, because for any $\alpha < \omega_1$, the construction guarantees that $\nu_{y_{\alpha}}(\{y_{\alpha}\}) \neq \nu_{y_{\alpha + 1}}(\{y_{\alpha + 1}\})$.
\end{proof}


For a natural number $n \geq 1$, we denote by $\mathfrak{S}_n$ the group of all permutations of the set $\{1, 2, \ldots, n\}$. Given an element $x = (x_1, \ldots, x_n) \in K^n$, we write $\sigma \cdot x = (x_{\sigma(1)}, \ldots, x_{\sigma(n)})$ to represent the action of a permutation $\sigma \in \mathfrak{S}_n$ on $x$.

\begin{theorem}\label{Thm:vanish-C}
Let $n \geq 1$ be an integer, and let $\nu: K^n \to M_0(K^n)$ be a weak$^*$-continuous function that vanishes at infinity. Suppose $A = \{y_{\alpha} : \alpha < \omega_1\}$ and $B = \{z_{\alpha} : \alpha < \omega_1\}$ are subsets of $K^n$ such that the points in $A$ are pairwise disjoint and $\mathrm{supp}(y_{\alpha}) \subset \mathrm{supp}(z_{\alpha})$ for all $\alpha < \omega_1$. Assume there exists $\xi < \omega_1$ such that $\nu_x(\{\sigma \cdot x\}) = 0$ for every $x \in (L \setminus L_\xi)^n \setminus \varDelta(K^n)$ and every permutation $\sigma \in \mathfrak{S}_n$. Then there exists $\rho < \omega_1$ such that $\nu_{y_{\alpha}}(\{z_{\alpha}\}) = 0$ for all $\rho < \alpha < \omega_1$.
\end{theorem}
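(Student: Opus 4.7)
The approach mirrors that of Theorem~\ref{Thm:vanish-B}, modified to use the new hypothesis as a substitute for the ``$y$-only'' coordinates that are unavailable under $\mathrm{supp}(y_\alpha)\subseteq \mathrm{supp}(z_\alpha)$. Suppose, for contradiction, that $|\nu_{y_\alpha}(\{z_\alpha\})|\geq \epsilon$ for some $\epsilon>0$ and uncountably many $\alpha$; pass to such a subfamily. Apply the $\Delta$-system lemma and the usual refinements to arrange a common root $R_B$ for $\{\mathrm{supp}(z_\alpha)\}$ with $\mathrm{supp}(y_\alpha)\cap R_B=\emptyset$ (possible because $A$ is pairwise disjoint), fixed coordinate partitions $I_1,\ldots,I_k$ of $\{1,\ldots,n\}$ for $y_\alpha$ and $F_1,\ldots,F_l,F_{l+1},\ldots,F_{l+s}$ for $z_\alpha$ (the last $s$ blocks matching root values), relabeled so that $I_r$ and $F_r$ share their common value for $r\leq k$ and $F_{k+1},\ldots,F_l$ consist of values that appear in $z_\alpha$ but not in $y_\alpha$; finally, pick finite sets $G_\alpha\supseteq \mathrm{supp}(y_\alpha)\cup \mathrm{supp}(z_\alpha)$ with $|\nu_{y_\alpha}|(K^n\setminus G_\alpha^n)<\epsilon/2$.

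The main device, replacing the step in Theorem~\ref{Thm:vanish-B} that uses a ``$y$-only'' coordinate to push the limit into $\partial(K^n)$, is to adjoin to each $G_\alpha$ an auxiliary ``free'' point $w_\alpha\in L\setminus L_\xi$ chosen pairwise disjoint from all previously selected elements, and then apply the collapsing property of $L$ to the enumerated tuples $(G_\alpha\cup\{w_\alpha\})\setminus Q$ with the index partition arranged so that the $w_\alpha$-position is the unique block sent to infinity. The resulting subnet gives $y_\alpha\to u$ and $z_\alpha\to v$, both in $(L\setminus L_\xi)^n\subseteq L^n$. A compact clopen neighborhood $V=\prod_j V_j\subset L^n$ of $v$ may then be chosen (as in Theorem~\ref{Thm:vanish-B}) so that $V\cap G_\alpha^n=\{z_\alpha\}$ eventually, and the standard mass estimate combined with the weak$^*$ continuity of $\nu$ yields $|\nu_u(V)|\geq \epsilon/2$.

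The contradiction will come from showing $\nu_u(V)=0$, and this is where the permutation hypothesis enters. In the principal case, $k=n$ (so $u\notin \varDelta(K^n)$) and the coordinate multiplicities of $y_\alpha$ and $z_\alpha$ match, whence, by pigeonholing on $\mathfrak{S}_n$, $z_\alpha=\sigma\cdot y_\alpha$ for a fixed $\sigma$ along the subfamily; consequently $v=\sigma\cdot u$, and the hypothesis gives $\nu_u(\{v\})=0$. Since $\nu_u$ is atomic on the scattered space $K^n$ with summable atoms, $V$ can be shrunk---while keeping $V\cap G_\alpha^n=\{z_\alpha\}$ eventually---so as to make $|\nu_u(V)|$ arbitrarily small, in contradiction with $|\nu_u(V)|\geq \epsilon/2$. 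The hardest part, and the main obstacle, is to reduce the remaining configurations---$u\in \varDelta(K^n)$ (repeated coordinates in $y_\alpha$), and $v\neq \sigma\cdot u$ for every $\sigma$ (strict inclusion $\mathrm{supp}(y_\alpha)\subsetneq \mathrm{supp}(z_\alpha)$, or mismatched coordinate multiplicities)---to the principal case. In the strict-inclusion subcase one should be able to redirect the collapsing so that the infinity falls on a $z$-only coordinate, forcing $u\in \partial(K^n)$ and $\nu_u=0$ directly in the spirit of Theorem~\ref{Thm:vanish-B}; in the diagonal subcases, I expect to combine the argument with Theorem~\ref{Thm:vanish-A} applied to the pushforward function $x\mapsto \sigma_*\nu_x$, using the hypothesis on non-diagonal points to show that the constant value furnished by that theorem must vanish.
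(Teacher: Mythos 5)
Your principal case is sound and close in spirit to the paper's machinery: adjoining one free point $w_\alpha$ and collapsing with $\{w_\alpha\}$ as the unique block at infinity is exactly the device of Remark \ref{Rem:Singleton}, and when $y_\alpha$ has $n$ distinct coordinates the inclusion $\mathrm{supp}(y_\alpha)\subseteq\mathrm{supp}(z_\alpha)$ forces $z_\alpha=\sigma\cdot y_\alpha$, so the limit $u$ lies in $(L\setminus L_\xi)^n\setminus\varDelta(K^n)$, the hypothesis gives $\nu_u(\{\sigma\cdot u\})=0$, and shrinking $V$ through a clopen base of $v$ contradicts the lower bound $|\nu_u(V)|\geq\epsilon/2$. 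The gap is in the remaining cases, which are precisely those with $k<n$ blocks, and there both of your proposed reductions fail. First, in the strict-inclusion subcase, redirecting the collapse so that infinity falls on a $z$-only coordinate sends $v$, not $u$, to $\partial(K^n)$: every coordinate of $y_\alpha$ is a coordinate of $z_\alpha$, so $u\in\partial(K^n)$ can only happen if $v\in\partial(K^n)$ as well. But once $v\in\partial(K^n)$ there is no compact clopen $V\subseteq L^n$ around $v$, so the estimate $|\nu_{u_\gamma}(V)|\geq\epsilon/2$ cannot be passed to the limit by weak$^*$ continuity (one needs $\chi_V\in C_0(L^n)$), and the whole contradiction collapses. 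This is exactly why Theorem \ref{Thm:vanish-B} needs the hypothesis $\mathrm{supp}(y_\alpha)\setminus(\mathrm{supp}(z_\alpha)\cup R_A)\neq\emptyset$ and why Theorem \ref{Thm:vanish-C} cannot be run ``in its spirit.'' Second, in the diagonal subcases ($u\in\varDelta(K^n)$, which occurs whenever $k<n$, including your strict-inclusion case), the hypothesis $\nu_x(\{\sigma\cdot x\})=0$ never applies at $u$, and Theorem \ref{Thm:vanish-A} applied to pushforwards only furnishes \emph{some} constant $r$ for each fixed diagonal pattern; nothing in your toolkit forces $r=0$, since the hypothesis vanishes only off the diagonal and you have no mechanism linking values of $\nu$ at diagonal-pattern points to values at off-diagonal points. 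A measure function $\nu$ could a priori carry an atom of $\nu_u$ exactly at $v$ without violating anything you have established.

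The missing idea — and the heart of the paper's proof — is a \emph{third} net. The paper fixes an auxiliary uncountable family $\{x_\alpha\}\subset(L\setminus L_\xi)^n\setminus\varDelta(K^n)$ of genuinely off-diagonal points, disjoint from the $z_\beta$'s, and uses Theorem \ref{Thm:vanish-B} together with the permutation hypothesis to arrange $|\nu_{x_\alpha}|(M_1(\alpha)\times\cdots\times M_n(\alpha))=0$ on the product sets built from the coordinates of $x_\alpha$ according to the matching of the partitions. After Lemma \ref{Lemma-Selection} and a $\varDelta$-system refinement, the collapsing property is applied with blocks of the form $I_i\cup\{m+i\}$, which forces the off-diagonal net $u_\gamma=x_{\alpha_\gamma}$ and the net $v_\gamma=y_{\alpha_\gamma}$ to converge to the \emph{same} point $q$ (which may well lie in $\varDelta(K^n)$ — that is harmless). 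Then $|\nu_{u_\gamma}(V)|\leq\epsilon/3$ while $|\nu_{v_\gamma}(V)|\geq 2\epsilon/3$, and weak$^*$ continuity at $q$ gives the contradiction without ever evaluating an atom of the limit measure $\nu_q$. Your proposal lacks precisely this bridge from off-diagonal points to diagonal limits, so as written it proves only the special case $\mathrm{supp}(z_\alpha)=\mathrm{supp}(y_\alpha)$ with $y_\alpha\notin\varDelta(K^n)$.
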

\begin{proof}
If the statement were false, then by passing to an appropriate uncountable subset of indices, we may assume that there exists $\epsilon > 0$ such that $|\nu_{y_\alpha}|(\{z_\alpha\}) \geq \epsilon$ for all $\alpha < \omega_1$, and the collection $\{\mathrm{supp}(z_\alpha) : \alpha < \omega_1\}$ forms a $\varDelta$-system with root $R$.

We can also assume the existence of a partition $\{I_1, \ldots, I_k\}$ of $\{1, \ldots, n\}$ such that, for each $\alpha < \omega_1$, if $y_\alpha = (y^\alpha_1, \ldots, y^\alpha_n)$, then $y^\alpha_i = y^\alpha_j$ if and only if $i$ and $j$ belong to the same set $I_r$ for some $1 \leq r \leq k$. Similarly, there exists a partition $\{F_1, \ldots, F_{l+1}\}$ such that, for each $\alpha < \omega_1$, if $z_\alpha = (z^\alpha_1, \ldots, z^\alpha_n)$, then $R = \{z^\alpha_j : j \in F_{l+1}\}$. For all $i, j \in \{1, \ldots, n\} \setminus F_{l+1}$, we have $z^\alpha_i = z^\alpha_j$ if and only if $i$ and $j$ belong to the same set $F_s$ for some $1 \leq s \leq l$. Additionally, we assume that there exists a sequence $\{c_j\}_{j \in F_{l+1}}$ such that $z^\alpha_j = c_j$ for every $j \in F_{l+1}$ for all $\alpha < \omega_1$. If $R = \emptyset$, we set $F_{l+1} = \emptyset$ and proceed with the proof with minor adjustments.

Since $\mathrm{supp}(y_\alpha) \subset \mathrm{supp}(z_\alpha)$, by relabeling the partitions if necessary, we may suppose that $\{y^\alpha_j : j \in I_i\} = \{z^\alpha_j : j \in F_i\}$ for each $1 \leq i \leq k$.

By passing to a further uncountable subset of indices, we can fix a collection $\{x_{\alpha} : \alpha < \omega_1\}$ of pairwise distinct points of $(L \setminus L_\xi)^n \setminus \varDelta(K^n)$ such that $\mathrm{supp}(x_{\alpha}) \cap \mathrm{supp}(z_\beta) = \emptyset$ whenever $\alpha \neq \beta$. For each $\alpha < \omega_1$ and each $1 \leq j \leq n$, if $j \in F_i$ for some $1 \leq i \leq k$, we define $M_j(\alpha) = \{x^{\alpha}_{j} : j \in I_i\}$. If $j \in F_i$ for some $k+1 \leq i \leq l+1$, we set $M_j(\alpha) = \emptyset$. Applying Theorem \ref{Thm:vanish-B} and recalling that $\nu_{x}(\{\sigma \cdot x\}) = 0$ for all $x \in (L \setminus L_\xi)^n \setminus \varDelta(K^n)$ and $\sigma \in \mathfrak{S}_n$, we can also suppose that $|\nu_{x_\alpha}|(M_1(\alpha) \times \dots \times M_n(\alpha)) = 0$ for every $\alpha < \omega_1$.

Employing Lemma \ref{Lemma-Selection}, we may pass to an even further uncountable subset and assume that there are sequences $\{G_{\alpha}\}_{\alpha \in \omega_1}$ and $\{H_{\alpha}\}_{\alpha \in \omega_1}$ of finite subsets of $K$ satisfying:
\begin{itemize}
    \item[(1)] $\mathrm{supp}(x_{\alpha}) \subseteq G_{\alpha} \setminus H_{\alpha}$ and $\mathrm{supp}(z_{\alpha}) \subseteq H_{\alpha} \setminus G_{\alpha}$,
    \item[(2)] $|\nu_{x_{\alpha}}|\left(K^n \setminus G_{\alpha}^n\right) < \frac{\epsilon}{3}$ and $|\nu_{z_{\alpha}}|\left(K^n \setminus H_{\alpha}^n\right) < \frac{\epsilon}{3}$.
\end{itemize}
Moreover, by the $\varDelta$-System Lemma, we may assume that the family $\{G_\alpha \cup H_\alpha : \alpha < \omega_1\}$ forms a $\varDelta$-system with root $Q$, where $R \subseteq Q$, and that all sets in this family have the same cardinality. Let $G'_\alpha = G_\alpha \setminus Q$ and $H'_\alpha = H_\alpha \setminus Q$, and assume that $|G'_\alpha| = m$ and $|H'_\alpha| = r$ for each $\alpha < \omega_1$.

We denote $G^\prime_\alpha = \{s^\alpha_1, \ldots, s^\alpha_m\}$ and $H^\prime_\alpha = \{s^\alpha_{m+1}, \ldots, s^\alpha_{m+r}\}$, and further assume that, for all $\alpha < \omega_1$, the first $n$ elements of $G_\alpha$ correspond to the coordinates of the point $x_\alpha$. That is, $s_j^\alpha = x^\alpha_j$ for each $1 \leq j \leq n$. Additionally, for each $1 \leq j \leq l$, we suppose $s^\alpha_{m+j} = z^\alpha_i$ if and only if $i \in F_j$.

We define the set $\mathcal{C} = \{(s_1^\alpha, \ldots, s_{m+r}^\alpha) : \alpha < \omega_1\} \subset K^{m+r}$ and observe that it forms a $\big((I_1 \cup \{m+1\}), \ldots, (I_k \cup \{m+k\}), \{m+k+1\}, \ldots, \{m+l\}, \{n+1, \ldots, m, m+l+1, \ldots, m+r\}\big)$-separated subset of $L^{m+r}$. Without loss of generality, we may assume that $Q^n \subset L_{\xi}^n$ and $\mathcal{C} \subset (L \setminus L_\xi)^n$. Since $L$ has the collapsing property, there exist pairwise distinct points $a_1, \ldots, a_l \in L \setminus Q$ such that the point $b = (b_1, \ldots, b_{m+r})$, where $b_j = a_i$ if $j \in (I_i \cup \{m+i\})$ for some $1 \leq i \leq k$, $b_{m+j} = a_j$ for $k+1 \leq j \leq l$, and $b_j = \infty$ otherwise, is an accumulation point of $\mathcal{C}$. We then fix a net $\{(s^{\alpha_{\gamma}}_1, \ldots, s^{\alpha_{\gamma}}_{m+r})\}_{\gamma \in \varGamma}$ in $\mathcal{C}$ that converges to $b$.

We let $U_1, U_2, \ldots, U_{l+1}$ be pairwise disjoint clopen compact subsets of $L$ such that $a_i \in U_i$ for each $1 \leq i \leq l$, and $R \subset U_{l+1}$. We also assume that $\left(U_1 \cup U_2 \cup \cdots \cup U_{l+1}\right) \cap (Q \setminus R) = \emptyset$. Therefore, by passing to a subnet if necessary, we may assume that for each $\gamma \in \varGamma$, the following conditions hold: $U_i \cap G_{\alpha_{\gamma}} = \{s^{\alpha_{\gamma}}_j : j \in I_i\}$ and $U_i \cap H_{\alpha_{\gamma}} = \{s^{\alpha_{\gamma}}_{m+i}\}$ for all $1 \leq i \leq k$; $U_i \cap G_{\alpha_{\gamma}} = \emptyset$ and $U_i \cap H_{\alpha_{\gamma}} = \{s^{\alpha_{\gamma}}_{m+i}\}$ for all $k+1 \leq i \leq l$; and $U_{l+1} \cap G_{\alpha_{\gamma}} = \emptyset$ while $U_{l+1} \cap H_{\alpha_{\gamma}} = R$.

For each $1 \leq j \leq n$, set $V_j = U_i$ and $w_j = a_j$ whenever $j \in F_i$ for some $1 \leq i \leq l$. Recalling that 
$R=\{c_j:j\in F_{l+1}\}$ we define $V_j = U_{l+1}$ and $w_j=c_j$ for each $j \in F_{l+1}$. Define $w = (w_1, \ldots, w_n)$ and consider the set $P = \{(f_1, \ldots, f_n) : f_j = w_j \text{ if } j \notin F_{l+1}, \text{ and } f_j \in R \text{ otherwise}\}$. Next, let $V \subset V_1 \times \cdots \times V_n$ be a clopen neighborhood of $w$ such that $V \cap P = \{w\}$.

From the net above, we form the following three nets:
\begin{itemize}
    \item $\{u_{\gamma}\}_{\gamma \in \varGamma}$ where $u_{\gamma}=(u^{\gamma}_1,\ldots,u^{\gamma}_n)$ is given by $u^{\gamma}_j = s^{\alpha_{\gamma}}_j$ for all $1 \leq j \leq n$,
    \item $\{v_{\gamma}\}_{\gamma \in \varGamma}$ where $v_{\gamma}=(v^{\gamma}_1,\ldots,v^{\gamma}_n)$ given by $v^{\gamma}_j = s^{\alpha_{\gamma}}_{m+i}$ if $j \in I_i$ for some $1 \leq i \leq k$,
    \item $\{w_{\gamma}\}_{\gamma \in \varGamma}$ where $w_{\gamma}=(w^{\gamma}_1,\ldots,w^{\gamma}_n)$ given by $w^{\gamma}_j = s^{\alpha_{\gamma}}_{m+i}$ if $j \in F_i$ for some $1 \leq i \leq l$, and $w^{\gamma}_t=c_j$ for every $j \in F_{l+1}$.
\end{itemize}
From the construction of $V$ and since $\{w_{\gamma}\}_{\gamma \in \varGamma}$ coverges to $w$, by passing to a subnet, we may assume that for each $\gamma \in \varGamma$, the following holds:
\[
G_{\alpha_\gamma}^n \cap V \subset M_1(\alpha_\gamma) \times \cdots \times M_n(\alpha_\gamma), \quad \text{and} \quad H_{\alpha_\gamma}^n \cap V = \{w_{\gamma}\}.
\]

Hence, recalling that $|\nu_{u_{\gamma}}|(M_1(\alpha_\gamma) \times \ldots \times M_n(\alpha_\gamma)) = 0$ for every $\gamma \in \varGamma$, it follows that:

\begin{align*}
|\nu_{u_{\gamma}}(V)|&\leq |\nu_{u_{\gamma}}|(M_1(\alpha_\gamma)\times \ldots\times M_n(\alpha_\gamma))|+|\nu_{u_{\gamma}}|(K^n\setminus G_{\alpha_{\gamma}}^n)|\leq\frac{\epsilon}{3}=\frac{\epsilon}{3}
\end{align*}
and
\begin{align*}
|\nu_{v_{\gamma}}(V)|\geq |\nu_{v_{\gamma}}|(\{w_{\gamma}\})-|\nu_{v_{\gamma}}|(K^n\setminus H_{\alpha_{\gamma}}^n)\geq\epsilon-\frac{\epsilon}{3}=\frac{2\epsilon}{3}.
\end{align*}

Recalling that the net $\{(s^{\alpha_{\gamma}}_1, \ldots, s^{\alpha_{\gamma}}_{m+r})\}_{\gamma \in \varGamma}$ converges to $b = (b_1, \ldots, b_{m+r})$, we define $q = (q_1, \ldots, q_n)$ by setting $q_j = b_i$ whenever $j \in I_i$ for some $1 \leq i \leq k$. It is clear that both nets $\{u_{\gamma}\}_{\gamma \in \varGamma}$ and $\{v_{\gamma}\}_{\gamma \in \varGamma}$ converge to $q$.  Since $\nu$ is weak$^*$-continuous and $V$ is a clopen compact subset of $L^n$, we have
\[
|\nu_q(V)| = \lim_{\gamma \to \infty} |\nu_{u_{\gamma}}(V)| \leq \frac{\epsilon}{3} < \frac{2\epsilon}{3} \leq \lim_{\gamma \to \infty} |\nu_{v_{\gamma}}(V)| = |\nu_q(V)|.
\]
This is a contradiction.

\end{proof}

\section{Operators on $C_0(L\times L)$}
\label{Sec-Operators}

As in the previous section, we continue to assume that $L$ is the space obtained from Theorem \ref{compactclub}, with $K = L \cup \{\infty\}$ denoting its Aleksandrov one-point compactification. The primary goal in this section is to provide a thorough description of all operators on the space $C_0(L \times L)$ and to establish Theorem \ref{main1}. Our approach involves decomposing an operator $T: C_0(L \times L) \to C_0(L \times L)$ into simpler components through a three-stage process. In the first stage, we isolate a component of the operator $T$ that is induced by permutations of the coordinates of a function in $C_0(L \times L)$. Specifically, let $\mathfrak{S}_n$ denote the set of all permutations of $\{1, 2, \ldots, n\}$. For each $\sigma \in \mathfrak{S}_n$, we define the operator $I_{\sigma}: C_0(L^n) \to C_0(L^n)$ by the formula:
\[I_\sigma(f)(x_1, \ldots, x_n) = f(x_{\sigma(1)}, \ldots, x_{\sigma(n)}).\]
Regardless of the topology on the locally compact space $L$, operators of this form are always well-defined. We will refer to any operator obtained as a linear combination of the operators $I_\sigma$ as a \emph{natural operator}. The first step in the decomposition is captured by the following result:

\begin{theorem}\label{ReducaoOperador}
For every operator $T:C_0(L^n)\to C_0(L^n)$ there is a set of scalars $\{a_\sigma:\sigma\in \mathfrak{S}_n\}\subseteq \mathbb{R}$ and an ordinal $\alpha<\omega_1$ such that if $S=T-\sum_{\sigma\in \mathfrak{S}_n}a_\sigma I_\sigma$, then 
 \[S[C_0(L^n)]\subseteq C_0(L^n\setminus (L\setminus L_{\alpha})^n).\]
\end{theorem}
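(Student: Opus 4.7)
The correspondence in Theorem \ref{Thm:OperatorCharacterization} lets us view $T$ as a weak$^*$ continuous function $\nu\colon K^n\to M_0(K^n)$ that vanishes at infinity. For $y\in L^n$, the natural operator $I_\sigma$ corresponds to the measure $\delta_{\sigma\cdot y}$. Thus, proving the statement amounts to producing scalars $a_\sigma$ so that the residual measure $\mu_y=\nu_y-\sum_\sigma a_\sigma\delta_{\sigma\cdot y}$ associated to $S$ vanishes identically on $(L\setminus L_\alpha)^n$ for some countable ordinal $\alpha$.

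To identify each $a_\sigma$, I would consider the operator $T\circ I_{\sigma^{-1}}$, whose associated weak$^*$ continuous measure function is the pushforward $y\mapsto (\sigma^{-1})_*\nu_y$; its value at $\{y\}$ is precisely $\nu_y(\{\sigma\cdot y\})$. By Corollary \ref{Cor:vanish-A}, there exist $a_\sigma\in\mathbb{R}$ and $\rho_\sigma<\omega_1$ such that $\nu_y(\{\sigma\cdot y\})=a_\sigma$ for every $y\in(L\setminus L_{\rho_\sigma})^n\setminus\varDelta(K^n)$. Setting $\alpha_0=\sup_{\sigma\in\mathfrak{S}_n}\rho_\sigma$ and $S=T-\sum_\sigma a_\sigma I_\sigma$, the residual measure $\mu$ satisfies $\mu_y(\{\sigma\cdot y\})=0$ for every $\sigma\in\mathfrak{S}_n$ and every $y\in(L\setminus L_{\alpha_0})^n\setminus\varDelta(K^n)$.

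The remaining task is to show that, after possibly enlarging $\alpha_0$ to some countable $\alpha$, the full equality $\mu_y=0$ holds for every $y\in(L\setminus L_\alpha)^n$. The plan is to argue by contradiction: otherwise, for each $\beta<\omega_1$ we select $y_\beta\in(L\setminus L_\beta)^n$ with $\mu_{y_\beta}\neq 0$ and an atom $z_\beta\in L^n$ of $\mu_{y_\beta}$; dyadic pigeonholing yields a uniform $\varepsilon>0$ with $|\mu_{y_\beta}(\{z_\beta\})|\geq\varepsilon$ for every $\beta$. Standard $\varDelta$-system and type reductions allow us to assume $\{\mathrm{supp}(y_\beta)\}$ and $\{\mathrm{supp}(z_\beta)\}$ are $\varDelta$-systems with roots $R_Y$ and $R_Z$, that all coordinate-equality types are fixed, and that the incidence pattern between $\mathrm{supp}(y_\beta)$ and $\mathrm{supp}(z_\beta)$ is fixed. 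If $\mathrm{supp}(y_\beta)\setminus(\mathrm{supp}(z_\beta)\cup R_Y)\neq\emptyset$ for every $\beta$, Theorem \ref{Thm:vanish-B} forces $\mu_{y_\beta}(\{z_\beta\})=0$ cofinally, a contradiction. Otherwise $\mathrm{supp}(y_\beta)\subseteq\mathrm{supp}(z_\beta)\cup R_Y$; together with the fact that $z_\beta\neq\sigma\cdot y_\beta$ for any $\sigma$ (ensured in step 1), Theorem \ref{Thm:vanish-C} applies, once pairwise disjointness is arranged by passing beyond the fixed root, and again gives the contradiction.

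The main obstacle I anticipate is the end of the last step. When the root $R_Y$ is nonempty, the points $\{y_\beta\}$ are literally not pairwise disjoint, so Theorem \ref{Thm:vanish-C} does not apply verbatim; the fix is a \emph{relative} form in which the fixed part $R_Y$ is treated as invariant background and, in effect, the effective dimension is reduced. A parallel subtlety concerns the diagonal: for $y\in(L\setminus L_\alpha)^n\cap\varDelta(K^n)$ with isolated coordinates there is no way to approximate $y$ by points in $(L\setminus L_\alpha)^n\setminus\varDelta(K^n)$ and invoke weak$^*$ continuity, so these points need a separate argument — typically applying Theorem \ref{Thm:vanish-A} directly to the collection of diagonal tuples of each fixed equality type and absorbing the resulting countable exceptional set into the final choice of $\alpha$.
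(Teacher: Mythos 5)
Your proposal is correct and follows the paper's own proof essentially verbatim: the coefficients $a_\sigma$ are extracted exactly as in the paper by applying Corollary \ref{Cor:vanish-A} to $y\mapsto (T\circ I_{\sigma^{-1}})^*(\delta_y)$, and the residual operator is annihilated on $(L\setminus L_\alpha)^n$ by the same contradiction scheme, splitting the atoms $z_\beta$ of $S^*(\delta_{y_\beta})$ according to whether $\mathrm{supp}(y_\beta)\setminus \mathrm{supp}(z_\beta)\neq\emptyset$ (Theorem \ref{Thm:vanish-B}) or $\mathrm{supp}(y_\beta)\subseteq\mathrm{supp}(z_\beta)$ (Theorem \ref{Thm:vanish-C}).

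The two obstacles you flag at the end are both vacuous, so no ``relative'' form of Theorem \ref{Thm:vanish-C} and no separate diagonal argument are needed. First, the root $R_Y$ is automatically empty: since $y_\beta\in(L\setminus L_\beta)^n$, any fixed point of $L$ lies in some $L_\gamma$ and is avoided by $\mathrm{supp}(y_\beta)$ for all $\beta>\gamma$, so after thinning the supports of the $y_\beta$ are pairwise disjoint --- this is exactly how the paper arranges matters before invoking the dichotomy. Second, points of $\varDelta(K^n)$ require no special treatment: Theorems \ref{Thm:vanish-B} and \ref{Thm:vanish-C} are stated for points with arbitrary coordinate-equality patterns (the partitions $I_1,\ldots,I_k$ in their hypotheses and proofs do this work), and Theorem \ref{Thm:vanish-C} takes the \emph{off-diagonal} vanishing $S^*(\delta_x)(\{\sigma\cdot x\})=0$ for $x\in(L\setminus L_\xi)^n\setminus\varDelta(K^n)$ as its hypothesis --- precisely what your first step supplies. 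In particular it covers diagonal $y_\beta$ and atoms of the form $z_\beta=\sigma\cdot y_\beta$; your parenthetical claim that step 1 ensures $z_\beta\neq\sigma\cdot y_\beta$ is false for diagonal $y_\beta$ (e.g.\ $\sigma\cdot y=y$ when $y=(x,\ldots,x)$), but it is also unnecessary, since Theorem \ref{Thm:vanish-C} does not require it. Your fallback of applying Theorem \ref{Thm:vanish-A} to diagonal tuples would in any case only control the atoms at $\{y\}$ itself, not arbitrary atoms $z$, so it is fortunate that the dichotomy already absorbs this case.
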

\begin{proof}
For each $\sigma \in \mathfrak{S}_n$, the map $y \mapsto (T \circ I_{\sigma^{-1}})^*(\delta_y)$ defines a weak$^*$ continuous function from $K^n$ to $M_0(K^n)$ that vanishes at infinity. According to Corollary \ref{Cor:vanish-A}, there exist $a_\sigma \in \mathbb{R}$ and $\xi_\sigma < \omega_1$ such that
\[ 
(T \circ I_{\sigma^{-1}})^*(\delta_x)(\{x\}) = T^*(\delta_x)(\{\sigma \cdot x\}) = a_\sigma 
\]
for all $x \in (L \setminus L_{\xi_\sigma})^n\setminus \varDelta(K^n)$.

We let $\xi = \max\{\xi_\sigma : \sigma \in \mathfrak{S}_n\}$ and consider the operator $S = T - \sum_{\sigma \in \mathfrak{S}_n} a_\sigma I_\sigma$. We note that for every $x \in (L \setminus L_\xi)^n\setminus \varDelta(K^n)$ and $\gamma \in \mathfrak{S}_n$, we have
\begin{align*}
    S^*(\delta_x)(\{\gamma\cdot x\}) &= T^*(\delta_x)(\{\gamma\cdot x\}) - \sum_{\sigma \in \mathfrak{S}_n} a_\sigma I_\sigma^*(\delta_x)(\{\gamma\cdot x\}) \\
    &= T^*(\delta_x)(\{\gamma \cdot x\}) - \sum_{\sigma \in \mathfrak{S}_n} a_\sigma  \delta_{\sigma \cdot x}(\{\gamma\cdot x)\}) \\
    &= T^*(\delta_x)(\{\gamma \cdot x\}) - a_\gamma = 0.
\end{align*}

We claim that there exists some $\alpha < \omega_1$ such that $S[C_0(L^n)] \subset C_0(L^n \setminus (L \setminus L_{\alpha})^n)$. Assume, for the sake of contradiction, that for each $\alpha < \omega_1$, there is a function $f_\alpha \in C_0(L^n)$ and a point $y_\alpha \in (L \setminus L_\alpha)^n$ such that $S(f_\alpha)(y_\alpha) \neq 0$. Since 
\[S(f_\alpha)(y_\alpha) = S^*(\delta_{y_\alpha})(f_\alpha)=\sum_{z\in L^n}f_\alpha(z)S^*(\delta_{y_\alpha})(\{z\}),\] 
there exists $z_\alpha \in L^n$ with $S^*(\delta_{y_\alpha})(\{z_\alpha\}) \neq 0$. By passing to an uncountable subset of indices and relabeling if necessary, we may assume that the points in the set $\{y_\alpha : \alpha < \omega_1\}$ are pairwise disjoint. If $\mathrm{supp}(y_\alpha)\setminus\mathrm{supp}(z_\alpha) \neq \emptyset$ for uncountably many values of $\alpha$, this contradicts Theorem \ref{Thm:vanish-B}. On the other hand, if $\mathrm{supp}(y_\alpha) \subset \mathrm{supp}(z_\alpha)$ for uncountably many $\alpha$, recalling that $S^*(\delta_x)(\{\sigma \cdot x\}) = 0$ for all $x \in (L \setminus L_\xi)^n\setminus \varDelta(K^n)$, we arrive at a contradiction with Theorem \ref{Thm:vanish-C}.
\end{proof}

To simplify notation, we will henceforth consider $I: C_0(L \times L) \to C_0(L \times L)$ as the identity operator and $J: C_0(L \times L) \to C_0(L \times L)$ as the operator that transposes the coordinates of a function, defined by $J(f)(x, y) = f(y, x)$ for every $f \in C_0(L \times L)$.

In the space $C_0(L \times L)$, beyond the natural operators, which are linear combinations of the operators $I$ and $J$, there exist other elementary operators that arise through a specific form of multiplication by a function in $C_0(L)$. This process is formalized in the following definition:

\begin{definition}\label{Def:DiagonalMulti}
For given functions $g, h \in C_0(L)$, we define the diagonal multiplication operators $M_g,\ N_h: C_0(L \times L) \to C_0(L \times L)$ as follows:
\[ M_g(f)(x,y) = g(y)f(x,x) \quad \text{and} \quad N_h(f)(x,y) = h(x)f(y,y). \]
\end{definition}

\begin{lemma}\label{Lem:Vanish-D}
Let $\nu: K^2 \to M_0(K^2)$ be a weak$^*$ continuous function that vanishes at infinity. For every $\alpha<\omega_1$, there exists $\alpha \leq \beta < \omega_1$ and functions $g,h \in C_0(L_\beta)$ such that, for all $x, y \in L \setminus L_\beta$ and $a \in L_\beta$, we have
\begin{displaymath}
\nu_{(x,a)}(\{(x,y)\})=\nu_{(x,a)}(\{(y,x)\})  = \left\{
\begin{array}{ll}
0 & \text{if } x \neq y,\\
g(a) & \text{if } x = y.
\end{array}
\right.
\end{displaymath}
\begin{displaymath}
\nu_{(a,y)}(\{(x,y)\}) =\nu_{(a,y)}(\{(x,y)\})= \left\{
\begin{array}{ll}
0 & \text{if } x \neq y,\\
h(a) & \text{if } x = y.
\end{array}
\right.
\end{displaymath}
\end{lemma}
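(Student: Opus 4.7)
The plan is to establish the lemma in three steps: (i) off-diagonal vanishings for each fixed $a\in L$; (ii) diagonal constancy in $x$ for each fixed $a$; (iii) uniformization to a single countable ordinal $\beta\geq\alpha$. The uniformization of step (iii) is standard: starting from $\beta_0=\alpha$, set $\beta_{n+1}=\sup\{\beta_a : a\in L_{\beta_n}\}$, which is $<\omega_1$ since each $L_{\beta_n}$ is countable (as $\varphi$ is finite-to-one), and take $\beta=\sup_n\beta_n$.

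For step (i), fix $a\in L$ and treat $\nu_{(x,a)}(\{(x,y)\})$; the other three cases are symmetric. Arguing by contradiction, extract an uncountable pairwise-disjoint family $\{(x_\gamma,y_\gamma)\}_{\gamma<\omega_1}$ in $L^2\setminus\varDelta(K^2)$ with $|\nu_{(x_\gamma,a)}(\{(x_\gamma,y_\gamma)\})|\geq\epsilon$. By atomicity of Radon measures on the scattered space $K^2$, choose finite $F_\gamma\supseteq\{x_\gamma,y_\gamma,a\}$ with $|\nu_{(x_\gamma,a)}|(K^2\setminus F_\gamma^2)<\epsilon/4$, and apply the $\Delta$-System Lemma so that $\{F_\gamma\}$ has root $R\ni a$. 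Here neither Theorem~\ref{Thm:vanish-B} nor Theorem~\ref{Thm:vanish-C} applies directly: the only element of $\mathrm{supp}((x_\gamma,a))$ absent from $\mathrm{supp}((x_\gamma,y_\gamma))$ is $a$, which sits in the root; and $a\notin\mathrm{supp}((x_\gamma,y_\gamma))$ breaks the subset hypothesis of Theorem~\ref{Thm:vanish-C}. My plan is to enlarge each $F_\gamma$ by dummy points so that $|F_\gamma\setminus R|\geq 3$, list $F_\gamma\setminus R=\{x_\gamma,y_\gamma,s_{\gamma,3},\ldots,s_{\gamma,m}\}$, and apply the collapsing property to the resulting $(\{1\},\ldots,\{m\})$-separated family in $L^m$, with the partition arranged so that $s_{\gamma,m}\to\infty$ while $x_\gamma\to a_1\in L$ and $y_\gamma\to a_2\in L$ with $a_1\neq a_2$. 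Weak$^*$-continuity of $\nu$, together with the $\Delta$-system mass estimate applied to a small clopen compact $V\ni(a_1,a_2)$ with $F_\gamma^2\cap V=\{(x_\gamma,y_\gamma)\}$, then yields $\nu_{(a_1,a)}(\{(a_1,a_2)\})\geq 3\epsilon/4$.

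The main obstacle is converting this single atomic non-vanishing into a global contradiction, since by itself it violates no earlier estimate. My strategy is to iterate: varying the subnet and the labeling of the dummy positions generates an uncountable collection of distinct pairs $(a_1,a_2)$ with the same mass bound on the corresponding atoms of $\nu_{(a_1,a)}$. Passing to a pairwise-disjoint uncountable sub-family yields a new contradiction-seeking sequence at the "limit level"; repeating finitely many times forces either a configuration where the refined support structure now satisfies the hypothesis of Theorem~\ref{Thm:vanish-B} (since after enough stages a differentiating element outside the root necessarily appears), or a violation of $\|T\|<\infty$ through accumulated atomic mass. Step (ii) parallels the proof of Corollary~\ref{Cor:vanish-A}: if $\nu_{(x,a)}(\{(x,x)\})$ is not eventually constant, pick rationals $p<q$ and two uncountable families of $x$'s with values below $p$ and above $q$, then adapt Lemma~\ref{Lemma-Selection} and its collapsing-based contradiction to the setting where the indices share the fixed coordinate $a$ (which simply enlarges the root by $\{a\}$ without disturbing the selection). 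The functions $g,h$, extended by zero outside $L_\beta$, then belong to $C_0(L_\beta)$: continuity on $L_\beta$ follows from weak$^*$-continuity of $a\mapsto \nu_{(x_0,a)}$ for any fixed $x_0\in L\setminus L_\beta$, and vanishing at the point at infinity of $L_\beta$ follows from $\nu$ vanishing on $\partial K^2$.
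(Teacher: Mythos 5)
Your steps (ii) and (iii) are workable, and you correctly diagnosed why Theorems \ref{Thm:vanish-B} and \ref{Thm:vanish-C} do not apply to the off-diagonal atoms (the only differentiating coordinate, $a$, sits in the root). But the escape route you propose for step (i) — the iteration — has a genuine gap: it cannot terminate. One collapsing round converts the family of atoms $\nu_{(x_\gamma,a)}(\{(x_\gamma,y_\gamma)\})$ into a family $\nu_{(a_1^\xi,a)}(\{(a_1^\xi,a_2^\xi)\})$ of \emph{exactly the same shape}: the index point has support $\{a_1^\xi,a\}$, the atom point has support $\{a_1^\xi,a_2^\xi\}$, and the unique element of the index support absent from the atom support is still $a$, still in the root. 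So the configuration is self-reproducing and no ``differentiating element outside the root'' ever appears, contrary to your termination claim. Nor does mass accumulate: the new base points $a_1^\xi$ are accumulation points of the previous generation, hence of strictly larger Cantor--Bendixson rank, so the iteration climbs through unboundedly many finite ranks (recall $L$ has height $\omega$) without ever stopping; and since you cannot force uncountably many disjoint subfamilies to collapse to the same first coordinate $a_1$, the large atoms land in pairwise different measures $\nu_{(a_1^\xi,a)}$, so no single measure ever violates finiteness and no bound on $\|\nu\|$ is contradicted.

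The paper's proof avoids this entirely by aiming for \emph{constancy} rather than direct vanishing. It composes with the slice maps $P_1(x,y)=(x,a)$ and $P_2(x,y)=(a,y)$, and applies Theorem \ref{Thm:vanish-A} and Corollary \ref{Cor:vanish-A} to $\nu\circ P_1$, $J^*\circ\nu\circ P_1$, $\nu\circ P_2$, $J^*\circ\nu\circ P_2$; since $(\nu\circ P_1)_{(x,y)}=\nu_{(x,a)}$, this produces an ordinal $\rho_a<\omega_1$ and constants $r_a,s_a,u_a,v_a$ (off-diagonal) and $g_a,h_a$ (diagonal) with, e.g., $\nu_{(x,a)}(\{(x,y)\})=r_a$ for all distinct $x,y\in L\setminus L_{\rho_a}$. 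Vanishing of the off-diagonal constants is then immediate from finiteness of a single measure: fixing $x$ and letting $y$ range over $n$ distinct points of $L\setminus L_{\rho_a}$ gives $n|r_a|=|\nu_{(x,a)}|(\{x\}\times\{y_1,\ldots,y_n\})\leq|\nu_{(x,a)}|(K^2)<\infty$, forcing $r_a=0$, and likewise $s_a=u_a=v_a=0$. This infinitely-many-equal-atoms trick is the idea missing from your proposal; with it, your step (ii) becomes unnecessary in the form you sketched (the same composition trick handles the diagonal values, with no root-enlarged rerun of Lemma \ref{Lemma-Selection}), while your step (iii) and your verification that $g,h\in C_0(L_\beta)$ via an isolated point $z_0\in L\setminus L_\beta$ coincide with the paper's argument.
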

\begin{proof}
Let $a \in L$ be arbitrary, and define the functions $P_1, P_2: K^2 \to K^2$ by $P_1(x, y) = (x, a)$ and $P_2(x, y) = (a, y)$. Applying Theorem \ref{Thm:vanish-A} and Corollary \ref{Cor:vanish-A} to each of the functions $\nu \circ P_1$, $J^* \circ \nu \circ P_1$, $\nu \circ P_2$, and $J^* \circ \nu \circ P_2$, we obtain $\rho_a < \omega_1$ and scalars $r_a, g_a, s_a, u_a, h_a, v_a \in \mathbb{R}$ such that for every $x, y \in L \setminus L_{\rho_a}$ with $x \neq y$, the following holds:
\[
\begin{aligned}
\nu_{(x,a)}(\{(x,y)\}) &= r_a, & \nu_{(a,y)}(\{(x,y)\}) &= u_a, \\
\nu_{(x,a)}(\{(x,x)\}) &= g_a, & \nu_{(a,y)}(\{(y,y)\}) &= h_a, \\
\nu_{(x,a)}(\{(y,x)\}) &= s_a, & \nu_{(a,y)}(\{(y,x)\}) &= v_a.
\end{aligned}
\]

Now, observe that for every $n \in \mathbb{N}$, if $\{x, y_1, \ldots, y_n\}$ is a subset of $n+1$ distinct points from $L \setminus L_{\rho_a}$, then 
\[
n |r_a| = \sum_{i=1}^n |\nu_{(x,a)}|(\{(x,y_i)\}) = |\nu_{(x,a)}|(\{x\} \times \{y_1, \ldots, y_n\}) \leq |\nu_{(x,a)}|(K^2) < \infty,
\]
which implies that $r_a = 0$. Similarly, we can deduce that $r_a = s_a = u_a = v_a = 0$.

Next, let $\alpha < \omega_1$ be an arbitrary ordinal, and let $\{\beta_n\}_n$ be a sequence defined recursively as follows: set $\beta_0 = \alpha$, and assuming $\beta_0, \ldots, \beta_n < \omega_1$ have been constructed, define $\beta_{n+1} = \sup \{\rho_a : a \in L_{\beta_n}\}$. We are done by setting $\beta = \sup \{\beta_n : n < \omega\}$ and defining functions $g, h: L \to \mathbb{R}$ by 
\[
g(a) =
\begin{cases}
g_a, & \text{if } a \in L_\beta, \\
0, & \text{if } a \notin L_\beta,
\end{cases}
\quad
h(a) =
\begin{cases}
h_a, & \text{if } a \in L_\beta, \\
0, & \text{if } a \notin L_\beta.
\end{cases}
\]
Indeed, if $a \in L_\beta$, then there exists some $n < \omega_1$ such that $a \in L_{\beta_n}$, implying $\beta \geq \beta_{n+1} \geq \rho_a$.  Since $L \setminus L_\beta \subset L \setminus L_{\beta_{\rho_a}}$, it is clear that for $x, y \in L \setminus L_\beta$, the formulas in the statement of the lemma are satisfied.

To verify that $g, h \in C_0(L_\beta)$, pick an isolated point $z_0 \in L \setminus L_\beta$ and note that $g(a) = \nu_{(z_0,a)}(\{(z_0,z_0)\})$ and $h(a) = \nu_{(a,z_0)}(\{(z_0,z_0)\})$ for every $a \in L_\beta$.
\end{proof}

In the second stage of our decomposition, we will focus on isolating the components of the operator that correspond to the diagonal multiplications as defined earlier. Specifically, this step is formalized in the following theorem:

\begin{theorem}\label{ReducaoOperadorL2}
For every operator $T: C_0(L \times L) \to C_0(L \times L)$ there exist scalars $p$ and $q$, an ordinal $\beta < \omega_1$, and functions $g, h \in C_0(L_\beta)$ such that, if we define $Q = T - p I - q J - M_g - N_h$, then:
\begin{itemize}
\item $Q[C_0((L \setminus L_\beta)^2)] = \{\vec{0}\}$
\item $Q[C_0(L^2 \setminus (L \setminus L_\beta)^2)] \subseteq C_0(L^2 \setminus (L \setminus L_\beta)^2)$
\end{itemize}
\end{theorem}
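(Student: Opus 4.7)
The plan is to combine Theorem \ref{ReducaoOperador} and Lemma \ref{Lem:Vanish-D} with a transfinite closure argument. First, apply Theorem \ref{ReducaoOperador} with $n=2$, noting that $\mathfrak{S}_2=\{e,\tau\}$ yields exactly the operators $I$ and $J$, to obtain $p,q\in\mathbb{R}$ and $\alpha_0<\omega_1$ such that $S:=T-pI-qJ$ satisfies $S[C_0(L^2)]\subseteq C_0(L^2\setminus(L\setminus L_{\alpha_0})^2)$; equivalently, the measure $\nu:=S^*(\delta_\cdot)$ vanishes at every point of $(L\setminus L_{\alpha_0})^2$. Then apply Lemma \ref{Lem:Vanish-D} to $\nu$ starting from $\alpha_0$ to obtain $\beta_0\ge\alpha_0$ and functions $g_0,h_0\in C_0(L_{\beta_0})$ which control, for $x,y\in L\setminus L_{\beta_0}$ and $a\in L_{\beta_0}$, exactly the atoms $\nu_{(x,a)}(\{(x,x)\})=g_0(a)$, $\nu_{(a,y)}(\{(y,y)\})=h_0(a)$, together with the vanishing of the ``near'' atoms $\nu_{(x,a)}(\{(x,y)\})$ and $\nu_{(x,a)}(\{(y,x)\})$ whenever $x\ne y$.

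A direct case analysis of $Q(f)=S(f)-M_g(f)-N_h(f)$ for $f\in C_0((L\setminus L_\beta)^2)$ shows that, in addition to what Lemma \ref{Lem:Vanish-D} gives, we need the atoms $\nu_{(x,y)}(\{(z,w)\})$ with $(z,w)\in(L\setminus L_\beta)^2$ to vanish in the two residual configurations: (a)~$x\in L\setminus L_\beta$, $y\in L_\beta$ (and the symmetric case) with $z,w\ne x$; and (b)~$(x,y)\in L_\beta^2$ and $(z,w)\in(L\setminus L_\beta)^2$ arbitrary. For configuration (a), fixing $y\in L_\beta$ and applying Theorem \ref{Thm:vanish-B} with $y_\alpha=(x^\alpha,y)$ and $z_\alpha=(z^\alpha,w^\alpha)$ — whose supports form a $\Delta$-system with root $\{y\}$ and satisfy $\mathrm{supp}(y_\alpha)\setminus(\mathrm{supp}(z_\alpha)\cup\{y\})=\{x^\alpha\}\ne\emptyset$ — we conclude that only countably many such atoms can be nonzero for each fixed $y$. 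Since $L_{\beta_0}$ is countable and every $\nu_{(x,y)}$ has countable support, the union of all bad atom coordinates from (a) and (b) is countable, hence contained in some $L_{\beta_1}$ with $\beta_1<\omega_1$.

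The plan is to iterate: alternately invoke Lemma \ref{Lem:Vanish-D} (obtaining compatible $g_n,h_n$ via the intrinsic identification $g_n(a)=\nu_{(z_0,a)}(\{(z_0,z_0)\})$ for any isolated $z_0\in L\setminus L_{\beta_n}$, independent of the choice of $z_0$) and enlarge to absorb the countable set of bad atom coordinates, producing an increasing $\omega$-sequence $\beta_0<\beta_1<\cdots$ with supremum $\beta<\omega_1$. Taking $g(a)=S(\chi_{\{(z_0,z_0)\}})(z_0,a)$ for a fixed isolated $z_0\in L\setminus L_\beta$ yields a function in $C_0(L)$ supported in $L_\beta$, hence in $C_0(L_\beta)$, that agrees with every $g_n$ on $L_{\beta_n}$; analogously for $h$. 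Setting $Q=S-M_g-N_h$, Claim~(ii) is immediate, since for $(x,y)\in(L\setminus L_\beta)^2$ one has $g(y)=h(x)=0$ and $\nu_{(x,y)}=0$, so $Q(f)(x,y)=0$ whenever $f$ vanishes on $(L\setminus L_\beta)^2$. Claim~(i) then follows from the four-way case analysis on $(x,y)\in L^2$ (both in $L\setminus L_\beta$; mixed; both in $L_\beta$), whose ingredients are exactly those assembled above. The principal obstacle is the bookkeeping in the transfinite closure — verifying that at each stage the pool of bad atoms is countable, that the $g_n$ patch to a genuine element of $C_0(L_\beta)$, and that the resulting sup remains below $\omega_1$ — all of which hinge on the countability of each $L_{\beta_n}$ and the intrinsic representation of $g$ via $S$ applied to an indicator of an isolated diagonal point.
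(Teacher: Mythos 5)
Your proposal is correct, and it rests on the same two pillars as the paper's proof --- Theorem \ref{ReducaoOperador} with $n=2$ followed by Lemma \ref{Lem:Vanish-D} --- but it obtains the crucial ordinal $\beta$ by a genuinely different mechanism. The paper applies Lemma \ref{Lem:Vanish-D} exactly once, defines $Q=S-M_g-N_h$ immediately, and then proves the existence of $\beta$ with $Q[C_0((L\setminus L_\beta)^2)]=\{\vec{0}\}$ by contradiction: assuming every $\beta$ fails, it produces an uncountable family of witnesses, pigeonholes (using countability of $L_\rho$) to a single fixed second coordinate $a$, extracts nonzero atoms $(x_\beta,y_\beta)\in(L\setminus L_\beta)^2$ of $Q^*(\delta_{(z_\beta,a)})$ with pairwise disjoint supports, and kills each case of the trichotomy $z_\beta\notin\{x_\beta,y_\beta\}$ (Theorem \ref{Thm:vanish-B}), $z_\beta\in\{x_\beta,y_\beta\}$ with $x_\beta\ne y_\beta$ (the Lemma \ref{Lem:Vanish-D} atom formulas), and $x_\beta=y_\beta=z_\beta$ (cancellation $g(a)-g(a)=0$). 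You instead construct $\beta$ directly as the supremum of an $\omega$-chain, certifying at each stage that the set of ``residual'' atoms is countable --- via exactly the same Theorem \ref{Thm:vanish-B} contradiction for your configuration (a) (countably many admissible first coordinates $x$ per fixed $a$, then countable support per measure, cf.\ Remark \ref{Rem:SeAnulaNoInfinito}), and via plain atomicity for configuration (b) --- and absorbing those coordinates at the next stage. The price of your route is the bookkeeping you flag: re-invoking Lemma \ref{Lem:Vanish-D} at each stage and patching the resulting $g_n,h_n$ into a single $g,h\in C_0(L_\beta)$, which your intrinsic formula $g(a)=S(\chi_{\{(z_0,z_0)\}})(z_0,a)$ for an isolated $z_0\in L\setminus L_\beta$ handles correctly (and which is the same device the paper uses internally at the end of the proof of Lemma \ref{Lem:Vanish-D}); the payoff is a constructive argument that identifies in advance exactly which atoms must vanish, rather than refuting an uncountable family of failures. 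Note also that the consistency of the $g_n$ across stages, which your one-shot representation via $S$ settles, is precisely what the paper's single application of Lemma \ref{Lem:Vanish-D} lets it avoid; and your verification of the second bullet is in fact valid for all $f\in C_0(L^2)$, not only for $f$ vanishing on $(L\setminus L_\beta)^2$, since $S^*(\delta_{(x,y)})=0$ and $g(y)=h(x)=0$ there --- a harmless overstatement in your closing paragraph, not a gap.
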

\begin{proof}
From Theorem \ref{Thm:OperatorCharacterization}, there exist constants $p$ and $q$ and an ordinal $\alpha < \omega_1$ such that if $S = T - p  I - q  J$, then $S[C_0(L^2)] \subseteq C_0(L^2 \setminus (L \setminus L_{\alpha})^2)$. By applying Lemma \ref{Lem:Vanish-D} to the function $(x, y) \mapsto S^*(\delta_{(x, y)})$, we find $\alpha \leq \rho < \omega_1$ such that there exist functions $g, h \in C_0(L_\rho)$ satisfying, for every $a, b \in L_\rho$ and $x, y \in L \setminus L_\rho$, the following relations:
\[
S^*(\delta_{(x, a)})(\{(x, y)\}) = S^*(\delta_{(x, a)})(\{(x, y)\}) = \left\{
\begin{array}{ll}
0 & \text{if } x \neq y,\\
g(a) & \text{if } x = y.
\end{array}
\right.
\]
\[
S^*(\delta_{(b, x)})(\{(x, y)\}) = S^*(\delta_{(b, x)})(\{(y, x)\}) = \left\{
\begin{array}{ll}
0 & \text{if } x \neq y,\\
h(b) & \text{if } x = y.
\end{array}
\right.
\]
Define $Q = S - M_g - N_h$. It is clear that $Q[C_0(L^2)] \subseteq C_0(L^2 \setminus (L \setminus L_{\rho})^2)$. We claim that there exists $\rho \leq \beta < \omega_1$ such that $Q[C_0((L \setminus L_{\beta})^2)] = \{\vec{0}\}$.

Suppose, on the contrary, that for every $\rho \leq \beta < \omega_1$, there exist $f_\beta \in C_0((L \setminus L_\beta)^2)$ and $(z_\beta, w_\beta) \in L^2 \setminus (L \setminus L_{\rho})^2$ such that $Q(f_\beta)(z_\beta, w_\beta) \neq 0$. Since $L_\rho$ is countable, by passing to an uncountable subset and relabeling the elements, we may assume that $w_\beta = a$ for all $\beta < \omega_1$ (similarly, we could establish the case where $z_\beta = b$ for all $\rho \leq \beta < \omega_1$). Since $Q(f_\beta)(z_\beta, a) = Q^*(\delta_{(z_\beta, a)})(f_\beta)$, we can find $(x_\beta, y_\beta) \in (L \setminus L_\beta)^2$ such that $Q^*(\delta_{(z_\beta, a)})(\{(x_\beta, y_\beta)\}) \neq 0$. Passing to a further uncountable subset if necessary, we may assume that the points of $\{(x_\beta, y_\beta) : \beta < \omega_1\}$ are pairwise disjoint. If $z_\beta \notin \{x_\beta, y_\beta\}$ for uncountably many $\beta < \omega_1$, since $Q^*(\delta_{(z_\beta, a)})(\{(x_\beta, y_\beta)\}) \neq 0$ for each $\beta$, we have a contradiction with Theorem \ref{Thm:vanish-B}. Thus, we may suppose that $\{z_\beta\} \subset \{x_\beta, y_\beta\}$ for all $\beta$. If $x_\beta \neq y_\beta$ for some $\beta < \omega_1$, then $Q^*(\delta_{(z_\beta, a)})(\{(x_\beta, y_\beta)\}) = 0$, which is a contradiction. Therefore, $x_\beta = y_\beta = z_\beta$. In this case, we have
\[
Q^*(\delta_{(x_\beta, a)})(\{(x_\beta, x_\beta)\}) = S^*(\delta_{(x_\beta, a)})(\{(x_\beta, x_\beta)\}) - M_g^*(\delta_{(x_\beta, a)})(\{(x_\beta, x_\beta)\}) = g(a) - g(a) = 0,
\]
which is also a contradiction. Hence, our claim is established.

We then let $\rho \leq \beta < \omega_1$ be such that $Q[C_0((L \setminus L_\beta)^2)] = \{\vec{0}\}$. Consequently, we have:
\[Q[C_0(L^2 \setminus (L \setminus L_{\beta})^2)] = Q[C_0(L^2)] \subseteq C_0(L^2 \setminus (L \setminus L_\alpha)^2) \subseteq C_0(L^2 \setminus (L \setminus L_\beta)^2).\]
\end{proof}

As mentioned in the introduction, another type of operator that can arise on a space of the form $C_0(L \times L)$ is one obtained from the tensor products of operators on $C_0(L)$. For example, given operators $T_1, T_2: C_0(L) \to C_0(L)$, and noting that $C_0(L) \hat{\otimes}_{\varepsilon} C_0(L)$ is linearly isometric to $C_0(L \times L)$ (see \cite[Theorem 20.5.6]{Se}), there is an unique operator $T_1 \otimes T_2: C_0(L \times L) \to C_0(L \times L)$ that satisfies $T_1 \otimes T_2(f \otimes g)(x, y) = T_1(f)(x) T_2(g)(y)$, where $f \otimes g(x,y) = f(x)g(y)$, with norm $\|T_1 \otimes T_2\| = \|T_1\| \|T_2\|$ (see \cite[Theorem 20.5.3]{Se}). According to Theorem \ref{ReducaoOperador}, for every operator $T: C_0(L) \to C_0(L)$, there exist a scalar $p \in \mathbb{R}$ and an operator $R$ with separable range such that $T = pI + R$. Hence, for operators $T_1, T_2, T_3, T_4: C_0(L) \to C_0(L)$, if $T$ denotes the operator $T_1 \otimes T_4 + J \circ (T_2 \otimes T_3)$, then $T$ can be represented as 
\[
T = pI + qJ + R_1 \otimes I + J \circ (R_2 \otimes I) + J \circ (I \otimes R_3) + I \otimes R_4 + S,
\]
where $I$ denotes the identity operator on $C_0(L \times L)$, $S$ is an operator on $C_0(L \times L)$ with separable range, and $R_i$ ($i=1,2,3,4$) denote operators on $C_0(L)$ with separable range. Since the term $pI + qJ$ will be absorbed in the initial step of our decomposition process, the critical part of the operator above is what we refer to as the matrix operator generated by the operators $R_1, R_2, R_3, R_4: C_0(L) \to C_0(L)$. In the third stage of our decomposition, we will isolate a component of the operator that corresponds to a matrix operator.

It is important to note that each component of a matrix operator can be understood as a lifting of a separable range operator $R: C_0(L) \to C_0(L)$ to an operator on $C_0(L \times L)$. It is easily seen that these components satisfy the following formulas:
 
\begin{align*}
R_1^{(1)}(f)(x,y) &= (R_1 \otimes I)(f)(x,y) = R_1(f^y)(x), \\
R_2^{(2)}(f)(x,y) &= J \circ (R_2 \otimes I)(f)(x,y) = R_2(f^x)(y), \\
R_3^{(3)}(f)(x,y) &= J \circ (I \otimes R_3)(f)(x,y) = R_3(f_y)(x), \\
R_4^{(4)}(f)(x,y) &= (I \otimes R_4)(f)(x,y) = R_4(f_x)(y),
\end{align*}
where, for every $f \in C_0(L \times L)$, $f^y$ denotes the function in $C_0(L)$ given by $f^y(x) = f(x,y)$, and $f_x$ denotes the function in $C_0(L)$ given by $f_x(y) = f(x,y)$.

\begin{definition}\label{Def:Matrizes}
Let $R_1, R_2, R_3, R_4 : C_0(L) \to C_0(L)$ be separable range operators. The matrix operator generated by $R_1, R_2, R_3, R_4$ is the mapping 
\[\left( \begin{smallmatrix}
 R_1 & R_3 \\
 R_2 & R_4
\end{smallmatrix} \right) : C_0(L \times L) \to C_0(L \times L)\]
given by the formula
\[\left( \begin{smallmatrix}
 R_1 & R_3 \\
 R_2 & R_4
\end{smallmatrix} \right)(f)(x,y) = R_1^{(1)}(f)(x,y) + R_2^{(2)}(f)(x,y) + R_3^{(3)}(f)(x,y) + R_4^{(4)}(f)(x,y).\]
\end{definition}

The following proposition establishes an important fact about matrix operators:

\begin{proposition}\label{Prop:BlockZero}
Let $R_1, R_2, R_3, R_4 : C_0(L) \to C_0(L)$ be separable range operators and let $R=\left( \begin{smallmatrix}
 R_1 & R_3 \\
 R_2 & R_4
\end{smallmatrix} \right)$ be the matrix operator generated by these mappings. There exists $\rho < \omega_1$ such that 
\begin{itemize}
\item $R[C_0((L \setminus L_\rho)^2)] = \{\vec{0}\}$
\item $R[C_0(L^2 \setminus (L \setminus L_\rho)^2)] \subseteq C_0(L^2 \setminus (L \setminus L_\rho)^2)$
\end{itemize}
\end{proposition}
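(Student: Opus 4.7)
My plan is to reduce the proposition to a single-variable lemma and then deduce both bullets by a slice-by-slice computation using the formulas for $R_1^{(1)}, R_2^{(2)}, R_3^{(3)}, R_4^{(4)}$ given just before the statement.

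The single-variable lemma I intend to prove is the following: for every separable-range operator $R: C_0(L) \to C_0(L)$ there exists $\rho < \omega_1$ such that $R[C_0(L)] \subset C_0(L_\rho)$ and $R[C_0(L \setminus L_\rho)] = \{\vec 0\}$. For the first inclusion, I use that every $h \in C_0(L)$ has $\sigma$-compact support, and every compact subset of $L$ lies in some $L_\beta$ because $\{L_\beta\}$ is an increasing clopen cover by countable sets. A countable norm-dense subset of $R[C_0(L)]$ therefore fits inside a single $C_0(L_\alpha)$, and by closedness the whole range does. For the vanishing part I pass to the dual: $R^*(\delta_y) = 0$ for $y \in L \setminus L_\alpha$, since $R(g)(y)=0$ for all $g$; and for $y \in L_\alpha$ the measure $R^*(\delta_y) \in M_0(K)$ is purely atomic with countable support in $L$ because $K$ is scattered. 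Since $L_\alpha$ is countable, $E = \bigcup_{y \in L_\alpha} \mathrm{supp}(R^*(\delta_y))$ is still a countable subset of $L$ and hence lies in $L_\rho$ for some $\alpha \leq \rho < \omega_1$. For any $g \in C_0(L \setminus L_\rho)$ and any $y \in L$, we then have $R(g)(y) = \int g\,dR^*(\delta_y) = 0$, because $R^*(\delta_y)$ is either zero or supported on $L_\rho$, where $g$ vanishes.

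With the lemma in hand, I apply it to each $R_i$ and set $\rho < \omega_1$ to be the maximum of the four resulting ordinals. For the first bullet, take $f \in C_0((L \setminus L_\rho)^2)$; by the defining property, every slice $f^y, f^x, f_y, f_x$ belongs to $C_0(L \setminus L_\rho)$, so each $R_i$ annihilates them, and so each component $R_i^{(j)}(f)$ is zero; summing gives $R(f)=\vec 0$. For the second bullet, I observe that for any $f \in C_0(L \times L)$ the inclusion $R_1(f^y) \in C_0(L_\rho)$ forces $R_1^{(1)}(f)(x,y) = 0$ whenever $x \notin L_\rho$, so $R_1^{(1)}(f) \in C_0(L_\rho \times L)$; analogously $R_3^{(3)}(f) \in C_0(L_\rho \times L)$ and $R_2^{(2)}(f), R_4^{(4)}(f) \in C_0(L \times L_\rho)$. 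All four summands, and hence $R(f)$, lie in $C_0(L^2 \setminus (L \setminus L_\rho)^2)$; restricting to $f \in C_0(L^2 \setminus (L \setminus L_\rho)^2)$ gives the inclusion claimed. The main obstacle is really the single-variable lemma in the second paragraph, where the argument forces an interplay between two distinct countability features of $L$: the countability of each $L_\alpha$ and the countability of the support of any finite atomic Radon measure on the scattered compact space $K$.
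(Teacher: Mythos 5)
Your proposal is correct and follows essentially the same route as the paper's proof: localize the ranges $R_i[C_0(L)]$ inside some $C_0(L_\alpha)$, exploit the countability of $L_\alpha$ together with the countable atomic supports of the measures $R_i^*(\delta_y)$ to find $\rho$ with $R_i[C_0(L\setminus L_\rho)]=\{\vec{0}\}$, and then read both bullets off the component formulas $R_1^{(1)},\dots,R_4^{(4)}$. The only differences are presentational: you obtain $\rho$ directly by aggregating the supports $\bigcup_{y\in L_\alpha}\mathrm{supp}(R_i^*(\delta_y))$ where the paper reaches the same ordinal by a contradiction argument with an uncountable family, and you spell out the range-localization step ($\sigma$-compact supports, closedness of $C_0(L_\alpha)$) that the paper simply asserts.
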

\begin{proof}
Since $R_1, R_2, R_3, R_4: C_0(L) \to C_0(L)$ are separable range operators, there exists some $\alpha < \omega_1$ such that $R_i[C_0(L)] \subset C_0(L_\alpha)$ for each $i = 1, 2, 3, 4$. It follows from Definition \ref{Def:Matrizes} that for every $f \in C_0(L \times L)$ and $(x, y) \in (L \setminus L_\alpha)^2$, we have $R(f)(x, y) = 0$.

Next, observe that we can choose $\beta$ such that $R_i[C_0(L \setminus L_\beta)] = \{\vec{0}\}$ for every $i = 1, 2, 3, 4$. If this were not the case, for each $\beta < \omega_1$, there exists $f_\beta \in C_0(L \setminus L_\beta)$ such that $R_i(f_\beta) \neq 0$ for some $i = 1, 2, 3, 4$. This implies the existence of some $x \in L_\alpha$ and an uncountable family $\{y_\beta : \beta < \omega_1\}$ such that for some $i = 1, 2, 3, 4$, $R_i^*(\delta_x)(\{y_\beta\}) \neq 0$ for uncountably many $\beta < \omega_1$. This is a contradiction, since $R_i^*(\delta_x)$ has countable support.

Therefore, by Definition \ref{Def:Matrizes}, we can verify that $R(g) = 0$ for every $g \in C_0((L \setminus L_\beta)^2)$. We conclude the proof by taking $\rho = \max\{\alpha, \beta\}$.
\end{proof}

\begin{remark}\label{Rem:CountableDomain}
As a consequence from the previous proposition, it follows that the diagonal operators $M_g$ and $N_h$ can be represented as matrix operators only in the trivial case where $g$ and $h$ are zero functions. Indeed, if $R$ denotes a matrix operator, then there exists $\rho < \omega_1$ such that $R[C_0((L \setminus L_\rho)^2)] = \{\vec{0}\}$. If $R = N_h$ for some function $h \in C_0(L)$, then by fixing an isolated point $y_0 \in L \setminus L_\rho$ and letting $f = \chi_{\{(y_0, y_0)\}}$, we find that for every $x \in L$, $R(f)(x, y_0) = 0$, while $N_h(x, y_0) = h(x)$. This implies that $h = 0$. Using a similar argument, we obtain the same result for $M_g$.
\end{remark}

\begin{proposition}\label{oper8}
Let $R_1, R_2, R_3, R_4 : C_0(L) \to C_0(L)$ be separable range operators. The matrix operator 
$\left( \begin{smallmatrix}
 R_1 & R_3 \\
 R_2 & R_4
\end{smallmatrix} \right)$ has a separable image if and only if $R_1 = R_2 = R_3 = R_4 = 0$.
\end{proposition}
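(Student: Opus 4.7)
The ``if'' direction is immediate: if $R_1=R_2=R_3=R_4=0$, then $R:=\left(\begin{smallmatrix}R_1 & R_3\\ R_2 & R_4\end{smallmatrix}\right)$ is the zero operator, which has trivially separable range.

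For the converse, assume the image of $R$ is separable. The first step is to upgrade Proposition~\ref{Prop:BlockZero} by finding a single $\beta<\omega_1$ such that both $R[C_0(L\times L)]\subset C_0(L_\beta\times L_\beta)$ and $R_i[C_0(L)]\subset C_0(L_\beta)$ for each $i$. The condition on the individual $R_i$ is immediate from separability of their ranges. For the condition on $R$, I would use that every $f\in C_0(L\times L)$ has $\sigma$-compact support and that any compact subset of $L$ is contained in some $L_\alpha$, because the continuous finite-to-one surjection $\varphi:L\to\omega_1$ sends compact sets to bounded subsets of $\omega_1$. Projecting coordinatewise shows that each $f$ in the image is supported in some $L_{\alpha_f}\times L_{\alpha_f}$, and taking the supremum over a countable dense subset of the image yields the required common countable ordinal $\beta$.

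The second step exploits the explicit formula
\[ R(u\otimes v)(x,y)=v(y)R_1(u)(x)+v(x)R_2(u)(y)+u(y)R_3(v)(x)+u(x)R_4(v)(y), \]
which must vanish whenever $(x,y)\notin L_\beta\times L_\beta$. Restricting to $u,v\in C_0(L\setminus L_\beta)$ and $(x,y)\in(L\setminus L_\beta)\times L_\beta$ kills the $v(y)$- and $u(y)$-terms, leaving the identity $v(x)R_2(u)(y)+u(x)R_4(v)(y)=0$ to hold in the variable $x\in L\setminus L_\beta$. Choosing $u,v$ linearly independent in $C_0(L\setminus L_\beta)$—possible because this space is infinite-dimensional—forces $R_2(u)(y)=R_4(v)(y)=0$ for all $y\in L_\beta$; combined with $R_2(u),R_4(v)\in C_0(L_\beta)$ this gives $R_2(u)=R_4(v)=0$. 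The mirror case $(x,y)\in L_\beta\times(L\setminus L_\beta)$ yields $R_1=R_3=0$ on $C_0(L\setminus L_\beta)$.

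To extend to $C_0(L_\beta)$, I would take $u\in C_0(L_\beta)$, $v\in C_0(L\setminus L_\beta)$, and $(x,y)\in(L\setminus L_\beta)\times L_\beta$: now $u(x)=v(y)=0$ and the formula reduces to $v(x)R_2(u)(y)+u(y)R_3(v)(x)=0$. Since $R_3(v)=0$ by the previous step and $v$ can be chosen as the characteristic function of an isolated point of $L\setminus L_\beta$ (abundant in this uncountable scattered space), we conclude $R_2(u)=0$. Analogous choices dispose of $R_1$, $R_3$ and $R_4$ on $C_0(L_\beta)$, and since $L_\beta$ is clopen so that $C_0(L)=C_0(L_\beta)\oplus C_0(L\setminus L_\beta)$, we obtain $R_1=R_2=R_3=R_4=0$. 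The principal obstacle I foresee is the very first step: promoting the ``cross'' containment of Proposition~\ref{Prop:BlockZero} to a genuine ``box'' containment $C_0(L_\beta\times L_\beta)$ via the hypothesized separability; once that is in hand, the remainder is a careful but routine bootstrapping on the tensor formula.
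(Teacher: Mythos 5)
Your proof is correct, and its overall skeleton matches the paper's: both arguments first upgrade separability of the image to the box containment $R[C_0(L\times L)]\subset C_0(L_\beta\times L_\beta)$ (the paper cites \cite[Proposition 3.5]{candido2} for this; your $\sigma$-compact-support argument, using that $\varphi$ maps compact sets to bounded subsets of $\omega_1$ and then taking a supremum over a countable dense subset of the image, is precisely how that fact is proved), and both then test $R$ on rank-one tensors evaluated at points outside $L_\beta\times L_\beta$, with characteristic functions of isolated points of $L\setminus L_\beta$ doing the final work. The genuine divergence is how the auxiliary vanishing $R_i[C_0(L\setminus L_\beta)]=\{\vec{0}\}$ is obtained. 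The paper imports it from the proof of Proposition \ref{Prop:BlockZero}, where it rests on the adjoint measures $R_i^*(\delta_x)$ having countable support (an uncountable-family contradiction); with that in hand, the single test function $H=h\otimes\chi_{\{y_0\}}$ with $y_0\in L\setminus L_\rho$ isolated kills all four operators on all of $C_0(L)$ in one stroke. You instead derive the vanishing internally from the tensor identity $R(u\otimes v)(x,y)=v(y)R_1(u)(x)+v(x)R_2(u)(y)+u(y)R_3(v)(x)+u(x)R_4(v)(y)$: restricting to $u,v\in C_0(L\setminus L_\beta)$ and invoking linear independence of $u$ and $v$ to separate the coefficients $R_2(u)(y)$ and $R_4(v)(y)$ is a purely algebraic substitute for the measure-theoretic step, and it makes the proposition self-contained (no appeal to Proposition \ref{Prop:BlockZero} or to the structure of the adjoint measures), at the cost of the extra bootstrapping stage on $C_0(L_\beta)$ before assembling $C_0(L)=C_0(L_\beta)\oplus C_0(L\setminus L_\beta)$. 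Both routes are sound; yours is a bit longer but more elementary, while the paper's is shorter because it reuses machinery already established earlier in the section.
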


\begin{proof}
Assume that $R = \left( \begin{smallmatrix}
 R_1 & R_3 \\
 R_2 & R_4
\end{smallmatrix} \right)$ has a separable image. By arguing as in \cite[Proposition 3.5]{candido2}, we can obtain $\rho < \omega_1$ such that $R[C_0(L \times L)] \subset C_0(L_\rho \times L_\rho)$. Moreover, as discussed in the proof of Proposition \ref{Prop:BlockZero}, we may assume that $R_i[C_0(L)] \subset C_0(L_\rho)$ and $R_i[C_0(L \setminus L_\rho)] = \{\vec{0}\}$ for every $i = 1, 2, 3, 4$. 

Now, pick an isolated point $y_0 \in L \setminus L_\rho$. For an arbitrary function $h \in C_0(L)$, consider $H = h \otimes \chi_{\{y_0\}} \in C_0(L \times L)$. For each $x \in L_\rho$, we have
\[
R_1(h)(x) = R_1((h \otimes \chi_{\{y_0\}})^{y_0})(x) = \left( \begin{smallmatrix}
 R_1 & R_2 \\
 R_3 & R_4
\end{smallmatrix} \right)(H)(x, y_0) = 0.
\]
By similar reasoning, we obtain $R_2(h)(x) = R_3(h)(x) = R_4(h)(x) = 0$. Thus, we deduce that $R_1 = R_2 = R_3 = R_4 = 0$.
\end{proof}

The following essential lemma is inspired by \cite[Theorem 4.8]{candido2}.

\begin{lemma}\label{Lem:AuxiliarDecomposition}
Let $S:C_0(L\times L) \to C_0(L\times L)$ be an operator. For every $1\leq \alpha < \omega_1$, there exist $\alpha\leq \rho<\omega_1$ and functions $r_1, r_2, r_3, r_4 : L_\rho \times L_\rho \to \mathbb{R}$ such that
\[
\begin{array}{cc}
S^*(\delta_{(a,x)})(\{(b,y)\}) = \left\{
\begin{array}{ll}
r_1(a,b) & \text{if } x = y; \\
0 & \text{if } x \neq y
\end{array} \right.
& 
S^*(\delta_{(a,x)})(\{(y,b)\}) = \left\{
\begin{array}{ll}
r_3(a,b) & \text{if } x = y; \\
0 & \text{if } x \neq y
\end{array} \right.
\\[1.5em]
S^*(\delta_{(x,a)})(\{(b,y)\}) = \left\{
\begin{array}{ll}
r_2(a,b) & \text{if } x = y; \\
0 & \text{if } x \neq y
\end{array} \right.
& 
S^*(\delta_{(x,a)})(\{(y,b)\}) = \left\{
\begin{array}{ll}
r_4(a,b) & \text{if } x = y; \\
0 & \text{if } x \neq y
\end{array} \right.
\end{array}
\]
for all $a, b \in L_\rho$ and $x, y \in L \setminus L_\rho$.
\end{lemma}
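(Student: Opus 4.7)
The plan is to analyze the weak$^*$ continuous function $\nu : K^2 \to M_0(K^2)$ given by $\nu_x = S^*(\delta_x)$, which vanishes at infinity, and to reduce each of the four relations in the statement to the results of Section \ref{Sec-Auxiliary}. By the symmetry of the four cases---replacing $S$ by $S\circ J$, $J\circ S$, or $J\circ S\circ J$ permutes the positions of the two fixed coordinates of the source and target points---it suffices to treat the first relation, involving $r_1$. I will prove, for each fixed $a, b \in L$: (i) the set $B(a, b) = \{(x, y) \in L^2 : x \neq y,\ \nu_{(a, x)}(\{(b, y)\}) \neq 0\}$ is countable, and (ii) there exist $\xi^{a, b} < \omega_1$ and $r_1(a, b) \in \mathbb{R}$ such that $\nu_{(a, x)}(\{(b, x)\}) = r_1(a, b)$ for every $x \in L \setminus L_{\xi^{a, b}}$.

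For (i), I will argue by contradiction using Theorem \ref{Thm:vanish-B}. If some single $x \in L$ had uncountably many $y$ with $(x, y) \in B(a, b)$, the measure $\nu_{(a, x)}$ would carry uncountably many atoms, impossible for a finite Radon measure. Otherwise I extract an uncountable family $(x_\alpha, y_\alpha)_{\alpha < \omega_1}$ in $B(a, b)$ with pairwise distinct $x_\alpha$'s avoiding $\{a, b, y_\alpha\}$, and set $Y_\alpha = (a, x_\alpha)$, $Z_\alpha = (b, y_\alpha)$. Then $\{\mathrm{supp}(Y_\alpha)\}_\alpha = \{\{a, x_\alpha\}\}_\alpha$ forms an uncountable $\varDelta$-system with root $\{a\}$, and $\mathrm{supp}(Y_\alpha) \setminus (\mathrm{supp}(Z_\alpha) \cup \{a\}) = \{x_\alpha\} \setminus \{b, y_\alpha\} \neq \emptyset$, so Theorem \ref{Thm:vanish-B} forces $\nu_{Y_\alpha}(\{Z_\alpha\}) = 0$ for all but countably many $\alpha$, contradicting the choice of the family.

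For (ii), I will exploit first-countability of $L$ to fix a decreasing sequence $\{W_n\}_n$ of clopen compact neighborhoods of $b$ in $L$ with $\bigcap_n W_n = \{b\}$, and then define auxiliary operators $U_n : C_0(L) \to C_0(L^2)$ by $U_n(g) = S(\chi_{W_n} \otimes g)$. By Theorem \ref{Thm:OperatorCharacterization}, $(c, z) \mapsto U_n^*(\delta_{(c, z)})$ is weak$^*$ continuous from $K^2$ to $M_0(K)$ and vanishes on $\partial(K^2)$; restricting the first coordinate to $a$, the map $\mu^{a, n}(x) := U_n^*(\delta_{(a, x)})$ is a weak$^*$ continuous function $K \to M_0(K)$ vanishing at infinity, and a direct computation gives $\mu^{a, n}(x)(E) = \nu_{(a, x)}(W_n \times E)$ for every Borel $E \subset K$. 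Since $\varDelta(K^1) = \emptyset$, Corollary \ref{Cor:vanish-A} in dimension one will produce $\xi_n^{a, b} < \omega_1$ and $r_n^{a, b} \in \mathbb{R}$ with $\nu_{(a, x)}(W_n \times \{x\}) = r_n^{a, b}$ for every $x \in L \setminus L_{\xi_n^{a, b}}$. Setting $\xi^{a, b} = \sup_n \xi_n^{a, b} < \omega_1$ and fixing any $x \in L \setminus L_{\xi^{a, b}}$, countable additivity of $\nu_{(a, x)}$ will yield $\nu_{(a, x)}(\{(b, x)\}) = \lim_n \nu_{(a, x)}(W_n \times \{x\}) = \lim_n r_n^{a, b}$; since the right-hand side is independent of the chosen $x$, the common value $r_1(a, b) := \lim_n r_n^{a, b}$ does the job.

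To obtain a single $\rho \geq \alpha$ that works uniformly for all $a, b \in L_\rho$ and all four cases, I will recurse as at the end of Lemma \ref{Lem:Vanish-D}: start from $\beta_0 = \alpha$, and at stage $n + 1$ take $\beta_{n+1} < \omega_1$ larger than every ordinal produced by steps (i) and (ii) above (and their symmetric analogues) over the countable set of pairs $a, b \in L_{\beta_n}$, then set $\rho := \sup_n \beta_n$. The main technical obstacle is step (ii): a direct application of Corollary \ref{Cor:vanish-A} to $\nu$ itself would control only $\nu_{(c, z)}(\{(c, z)\})$ for $(c, z) \in (L \setminus L_\rho)^2 \setminus \varDelta(K^2)$, which is useless here since $a, b$ lie in $L_\rho$. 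The detour through the one-dimensional auxiliary functions $\mu^{a, n}$ and the limit $W_n \downarrow \{b\}$ is where the real work lies.
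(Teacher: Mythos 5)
Your proposal is correct and follows essentially the same route as the paper: the heart of the paper's proof is exactly your step (ii) --- the one-dimensional slice measures $x \mapsto S^*(\delta_{(a,x)})(W_n \times \cdot)$ along a decreasing sequence of clopen compact neighborhoods $W_n \downarrow \{b\}$, Corollary~\ref{Cor:vanish-A} in dimension one, and continuity from above of the finite Radon measure to pass to the limit value $r_1(a,b)$ --- followed by the same closing recursion $\rho = \sup_n \rho_n$. The only (harmless) deviation is your step (i): the paper extracts the off-diagonal vanishing from the same one-dimensional slices via $\bigcap_n (W_n\setminus\{b\})\times\{y\}=\emptyset$, whereas you apply Theorem~\ref{Thm:vanish-B} directly in dimension two with the $\varDelta$-system $\{(a,x_\alpha)\}$ and then convert countability of $B(a,b)$ into an ordinal bound using that every countable subset of $L$ lies in some $L_\xi$; both arguments are valid.
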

\begin{proof}
Let $1 \leq \alpha < \omega_1$ be a fixed ordinal, and let $a, b \in L$ be arbitrary. We can choose a sequence of clopen compact neighborhoods $V_1 \supset V_2 \supset V_3 \supset \cdots$ of $b$ such that $\bigcap_{n < \omega} V_n = \{b\}$. For each $n < \omega$, we define the functions $\nu^{(1,n)}, \nu^{(2,n)}, \nu^{(3,n)}, \nu^{(4,n)}: K \to M_0(K)$ by:
\[\begin{array}{cc}
\nu^{(1,n)}_{x}(U) = S^*(\delta_{(a,x)})(V_n \times U), & \nu^{(3,n)}_{x}(U) = S^*(\delta_{(a,x)})(U \times V_n) , \\[1.5em]
\nu^{(2,n)}_{x}(U) = S^*(\delta_{(x,a)})(V_n \times U), & \nu^{(4,n)}_{x}(U) = S^*(\delta_{(x,a)})(U \times V_n).
\end{array}\]
It is readily seen that each of the functions above is weak$^*$ continuous and vanishes at infinity. We will work with the sequence $\{\nu^{(1,n)}\}_n$; similar arguments apply to the other sequences. For each $n < \omega$, from Theorems \ref{Thm:vanish-B} and \ref{Thm:vanish-A}, there exist $s_n \in \mathbb{R}$ and $\beta_n < \omega_1$ such that
\[\nu^{(1,n)}_{x}(\{y\})= \begin{cases}
s_n, & \text{if } x = y, \\
0, & \text{if } x \neq y,
\end{cases}\]
for all $x,y \in L \setminus L_{\beta_n}$. We define $\xi_1(a,b) = \sup_{n < \omega} \beta_n$. For every $m, n < \omega$ and $x \in L \setminus L_{\xi_1(a,b)}$, assuming that $n \leq m$, we have
\[
|s_n - s_m| = |\nu^{(1,n)}_{x}(\{x\}) - \nu^{(1,m)}_{x}(\{x\})| = |S^*(\delta_{(a,x)})((V_n \setminus V_m) \times \{x\})| \leq |S^*(\delta_{(a,x)})((V_n \setminus \{b\}) \times \{x\})|.
\]
Since $\bigcap_{n<\omega} (V_n \setminus \{b\}) \times \{x\} = \emptyset$, it follows that
\[
\lim_{n \to \infty} |S^*(\delta_{(a,x)})((V_n \setminus \{b\}) \times \{x\})| = 0,
\]
and we may deduce that the sequence $\{s_n\}_n$ converges to a limit $r_1(a,b) \in \mathbb{R}$. Hence, for each $x \in L \setminus L_{\xi_1(a,b)}$ and $n < \omega$, we have
\[
|S^*(\delta_{(a,x)})(\{(b,x)\}) - r_1(a,b)| \leq |S^*(\delta_{(a,x)})((V_n \setminus \{b\}) \times \{x\})| + |s_n - r_1(a,b)|,
\]
from which it follows that $S^*(\delta_{(a,x)})(\{(b,x)\}) = r_1(a,b)$. Furthermore, for all $x, y \in L \setminus L_{\xi_1(a,b)}$ and $n < \omega$, if $x \neq y$, we obtain
\[
|S^*(\delta_{(a,x)})(\{(b,y)\})| \leq |S^*(\delta_{(a,x)})(V_n \times \{y\})| + |S^*(\delta_{(a,x)})((V_n \setminus \{b\}) \times \{y\})|,
\]
and since $\bigcap_{n<\omega} (V_n \setminus \{b\}) \times \{y\} = \emptyset$, it follows that $S^*(\delta_{(a,y)})(\{(b,x)\}) = 0$. 

By applying similar arguments to the other sequences $\{\nu^{(2,n)}\}_n$, $\{\nu^{(3,n)}\}_n$, and $\{\nu^{(4,n)}\}_n$, we obtain, for each $(a,b) \in L \times L$, ordinals $\xi_2(a,b)$, $\xi_3(a,b)$, and $\xi_4(a,b)$, and real numbers $r_2(a,b)$, $r_3(a,b)$, and $r_4(a,b)$, respectively. 

We set $\rho_0 = \alpha$ and, by letting $\xi(a,b) = \max\{\xi_j(a,b) : j = 1,2,3,4\}$, we define
\[
\rho_{n+1} = \sup\{\xi(a,b) : (a,b) \in L_{\rho_n} \times L_{\rho_n}\},
\]
for each $n < \omega$. Finally, we take $\rho = \sup_{n < \omega} \rho_n$, and we complete the proof by defining, for each $i = 1, 2, 3, 4$, the functions $r_i: L_\rho \times L_\rho \to \mathbb{R}$ by the formula $(a,b) \mapsto r_i(a,b)$.

\end{proof}

We will now proceed with the third and final step of the decomposition, which will ultimately lead to the proof of our main result.

\begin{proof}[Proof of Theorem \ref{main1}]
Let $T:C_0(L\times L)\to C_0(L\times L)$ be an operator. According to Theorem \ref{ReducaoOperadorL2}, there are scalars $p$ and $q$, an ordinal $\beta < \omega_1$, and functions $g, h \in C_0(L_\beta)$ such that, if we define $Q = T - p I - q J - M_g - N_h$, then:
\begin{itemize}
\item $Q[C_0((L\setminus L_\beta)^2)]=\{\vec{0}\}$
\item $Q[C_0(L^2\setminus (L\setminus L_\beta)^2)]\subseteq C_0(L^2\setminus (L\setminus L_\beta)^2)$
\end{itemize}
Let $\beta \leq \rho < \omega_1$, and let $r_1, r_2, r_3, r_4 : L_\rho \times L_\rho \to \mathbb{R}$ be the functions associated with the operator $Q$, as described in Lemma \ref{Lem:AuxiliarDecomposition}. For each $i = 1, 2, 3, 4$, we extend $r_i$ to $K \times K$ by setting $r_i(a, b) = 0$ whenever $(a, b) \in K^2 \setminus L_\rho^2$, and define the map $R_i : C_0(L) \to C_0(L)$ via the formula:
\[
R_i(f)(a) = \sum_{b \in L} r_i(a, b)  f(b).
\]
To verify that $R_1$ is a well-defined bounded linear mapping, consider an isolated point $x_0 \in L \setminus L_\rho$. Notice that for each $f \in C_0(L_\rho)$ and $a \in L_\rho$, we have:
\begin{align*}
Q(f \otimes \chi_{\{x_0\}})(a, x_0) &= Q^*(\delta_{(a, x_0)})(f \otimes \chi_{\{x_0\}}) \\
&= \sum_{b \in L_\rho} f(b) Q^*(\delta_{(a, x_0)})(\{(b, x_0)\})= \sum_{b \in L_\rho} r_1(a, b) f(b) = R_1(f)(a).
\end{align*}
For each $f \in C_0(L \setminus L_\rho)$, we have $R_1(f) = 0$. Since $Q$ is a bounded linear mapping, so is $R_1$. Moreover, $R_1$ is a separable range operator, as $R_1[C_0(L)] \subset C_0(L_\rho)$. The same conclusion holds for the mappings $R_2, R_3$, and $R_4 : C_0(L) \to C_0(L)$.

Now, consider the matrix operator $R=\begin{pmatrix} R_1 & R_3 \\ R_2 & R_4 \end{pmatrix}$. Define $S = Q -R$ and, recalling Proposition \ref{Prop:BlockZero}, we notice that 
\begin{itemize}
\item $S[C_0((L\setminus L_\rho)^2)]=\{\vec{0}\}$
\item $S[C_0(L^2\setminus (L\setminus L_\rho)^2)]\subseteq C_0(L^2\setminus (L\setminus L_\rho)^2)$
\end{itemize}
Toward a contradiction, suppose that $S$ does not have a separable image. From the relations above, we can conclude that for each $\alpha < \omega_1$, there exists $f_\alpha \in C_0(L^2 \setminus (L \setminus L_\rho)^2)$ such that $S(f_\alpha) \notin C_0(L_\alpha \times L_\alpha)$. Consequently, there must exist $(a_\alpha, x_\alpha) \in (L^2 \setminus (L \setminus L_\rho)^2) \setminus L_\alpha^2$ such that $S(f_\alpha)(a_\alpha, x_\alpha) \neq 0$.

Now, we can assume that there is an uncountable subset $\varGamma \subset \omega_1$ such that for each $\alpha \in \varGamma$, there exists $(a_\alpha, x_\alpha) \in L_\rho \times (L \setminus L_\alpha)$ with $S(f_\alpha)(a_\alpha, x_\alpha) \neq 0$. The complementary case can be handled with a similar argument.

By passing to an uncountable subset, and since $L_\rho$ is countable, we may assume that $a_\alpha = a$ for all $\alpha \in \varGamma$, and the points $x_\alpha$ are all distinct elements of $L\setminus L_\rho$. For every $\alpha \in \varGamma$, we have
\[S(f_{\alpha})(a, x_{\alpha})= \int f_{\alpha} \, dS^*(\delta_{(a, x_{\alpha})})= \sum_{(z, y) \in L^2 \setminus (L \setminus L_\rho)^2} f_{\alpha}(z, y) S^*(\delta_{(a, x_{\alpha})})(\{(z, y)\})\]
and we may fix $(z_{\alpha}, y_{\alpha}) \in L^2 \setminus (L \setminus L_\rho)^2$ such that $S^*(\delta_{(a, x_{\alpha})})(\{(z_{\alpha}, y_{\alpha})\}) \neq 0$. By passing to a further uncountable subset, we may assume that $(z_{\alpha}, y_{\alpha}) \in L_\rho \times L$ for each $\alpha \in \varGamma$ (the other case can be proved with a similar argument). Furthermore, since $L_\rho$ is countable, we may assume that $z_\alpha = b$ for each $\alpha\in \varGamma$. Additionally, recalling Remark \ref{Rem:SeAnulaNoInfinito}, we assume that the points $y_\alpha$ are all distinct elements of $L\setminus L_\rho$.

If there exists some $\alpha \in \varGamma$ such that $x_\alpha \neq y_\alpha$, we obtain
\[S^*(\delta_{(a, x_{\alpha})})(\{(b, y_{\alpha})\}) = Q^*(\delta_{(a, x_{\alpha})})(\{(b, y_{\alpha})\}) - R^*(\delta_{(a, x_{\alpha})})(\{(b, y_{\alpha})\}) = 0,\]
which is a contradiction. Therefore, we can conclude that $x_\alpha = y_\alpha$ for all $\alpha$. However, in this case, we have
\begin{align*}
    S^*(\delta_{(a, x_{\alpha})})(\{(b, x_{\alpha})\}) &= Q^*(\delta_{(a, x_{\alpha})})(\{(b, x_{\alpha})\}) - (R_1^{(1)})^*(\delta_{(a, x_{\alpha})})(\{(b, x_{\alpha})\}) \\
    &= r_1(a, b) - r_1(a, b) = 0,
\end{align*}
which is again a contradiction. We deduce that there exists some $\alpha < \omega_1$ such that $S[C_0(L \times L)] \subset C_0(L_\alpha \times L_\alpha)$, implying that $S$ has a separable image.

To establish the uniqueness of the decomposition, assume that there exist $p', q' \in \mathbb{R}$, functions $g', h' \in C_0(L)$, separable range operators $R'_1, R'_2, R'_3, R'_4 : C_0(L) \to C_0(L)$, and an operator with a separable image $S' : C_0(L \times L) \to C_0(L \times L)$ such that, if we define $R' = \begin{pmatrix} R'_1 & R'_3 \\ R'_2 & R'_4 \end{pmatrix}$, then
\[
T = p' I + q'J + M_{g'} + N_{h'} + R' + S'.
\]

Recalling Proposition \ref{Prop:BlockZero}, and since $f'$ and $h'$ have countable support, there exists $\rho \leq \xi < \omega_1$ such that 
$f', g' \in C_0(L_\xi)$, $R'[C_0((L \setminus L_\xi)^2)] = \{\vec{0}\}$, and $S'[C_0(L \times L)] \subset C_0(L_\xi \times L_\xi)$. Fix distinct isolated points $x_0, y_0$ in $L \setminus L_\xi$. By computing $T(\chi_{(x_0, y_0)})(x_0, y_0)$ and $T(\chi_{(x_0, y_0)})(y_0, x_0)$, we deduce that $p = p'$ and $q = q'$. 

Next, for each $x, y \in L_\xi$, by evaluating $T(\chi_{(x_0, x_0)})(x_0, y)$ and $T(\chi_{(y_0, y_0)})(x, y_0)$, we deduce that $g(y) = g'(y)$ and $h(x) = h'(x)$. Thus, we have
\[
\begin{pmatrix} 
R_1 - R'_1 & R_3 - R'_3 \\ 
R_2 - R'_2 & R_4 - R'_4 
\end{pmatrix} = S' - S,
\]
and consequently, the matrix operator on the left-hand side has a separable image. By Proposition \ref{oper8}, we conclude that $R_i = R'_i$ for all $i = 1, 2, 3, 4$, and therefore $S' = S$.

\end{proof}

\section{The geometry of $C_0(L\times L)$}
\label{Sec-Geo}

In this section, we will discuss some aspects of the geometry of the spaces \(C_0(L \times L)\), where \(L\) is the locally compact space constructed in Section \ref{Sec-Ostaszewski}. The first result in this direction is as follows:

\begin{theorem}
$C_0(L \times L)$ is not isomorphic to its square.
\end{theorem}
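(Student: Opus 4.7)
The plan is to assume $X = C_0(L \times L) \sim X \oplus X$ for contradiction, producing a bounded projection $P$ on $X$ with $P(X) \sim X \sim (I-P)(X)$. The first step is to verify that the collection $\mathcal{J}$ of operators of the form $M_g + N_h + R + S$ is a two-sided ideal of $\mathcal{L}(X)$; this follows from the routine computations $JM_g = N_g$, $M_gJ = M_g$, $JN_h = M_h$, $N_hJ = N_h$, the fact that $JR$ and $RJ$ remain matrix operators, and the observation that any composition of two $\mathcal{J}$-elements has image in $C_0(V_\rho) := C_0((L \times L_\rho) \cup (L_\rho \times L))$ for some countable $\rho$, hence lies in $\mathcal{J}$ by testing Theorem \ref{main1} on off-diagonal characteristic functions outside $L_\rho$. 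The quotient $\mathcal{L}(X)/\mathcal{J}$ is then canonically isomorphic to the algebra $\mathbb{R}[J]/(J^2 - I)$, and the assignment $T \mapsto (p_T, q_T)$ from Theorem \ref{main1} descends to an algebra homomorphism. Writing $P = pI + qJ + M_g + N_h + R + S$ and using $P^2 = P$ modulo $\mathcal{J}$ yields $p^2 + q^2 = p$ and $2pq = q$, whose solutions are $(p, q) \in \{(0,0), (1,0), (\tfrac{1}{2}, \tfrac{1}{2}), (\tfrac{1}{2}, -\tfrac{1}{2})\}$; swapping $P$ and $I - P$ reduces to the cases $(p,q) \in \{(0,0), (\tfrac{1}{2}, \pm\tfrac{1}{2})\}$.

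Next, I would fix an isomorphism $\phi: X \to P(X)$, regard it as an operator $\phi:X \to X$, and decompose $\phi = p_\phi I + q_\phi J + M_{g_\phi} + N_{h_\phi} + R_\phi + S_\phi$. The essential computation, driving every case, is that on an off-diagonal indicator $f = \chi_{\{(x_0, y_0)\}}$ with $x_0 \neq y_0$ in $L \setminus L_\rho$ (for $\rho$ large enough that all $\mathcal{J}$-components of $\phi$ are controlled), the pieces $M_{g_\phi}$, $N_{h_\phi}$, $R_\phi$ all annihilate $f$, leaving $\phi(f) = p_\phi\chi_{\{(x_0, y_0)\}} + q_\phi\chi_{\{(y_0, x_0)\}} + S_\phi(f)$.

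In the case $(p, q) = (0, 0)$, $P \in \mathcal{J}$ forces $P(X) \subseteq C_0(V_\rho)$ for some $\rho$; since the points $(x_0, y_0), (y_0, x_0)$ lie outside $V_\rho$ when $x_0, y_0 \in L \setminus L_\rho$, the requirement $\phi(f) \in C_0(V_\rho)$ forces $p_\phi = q_\phi = 0$, so $\phi \in \mathcal{J}$ as well. Selecting an $\aleph_1$-sized pairwise disjoint family of off-diagonal indicators $\{f_\alpha = \chi_{\{(x_\alpha, y_\alpha)\}}\}$, which is equivalent to the unit basis of $c_0(\omega_1)$, we get $\phi(f_\alpha) = S_\phi(f_\alpha) \in \overline{\mathrm{Im}(S_\phi)}$, a separable subspace; since $\phi$ is an isomorphism, $\{\phi(f_\alpha)\}$ would embed $c_0(\omega_1)$ into a separable Banach space, which is impossible.

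For the cases $(p, q) = (\tfrac{1}{2}, s\tfrac{1}{2})$ with $s = \pm 1$, write $P = \tfrac{1}{2}(I + sJ) + T$ with $T \in \mathcal{J}$, and test $\phi$ against the vectors $f_\alpha = \chi_{\{(x_\alpha, y_\alpha)\}} - s\chi_{\{(y_\alpha, x_\alpha)\}}$, which satisfy $Jf_\alpha = -sf_\alpha$ and are therefore annihilated by $\tfrac{1}{2}(I + sJ)$. Imposing $\phi(f_\alpha) \in P(X) = \ker(I - P)$, i.e.\ $\tfrac{1}{2}(\phi(f_\alpha) - sJ\phi(f_\alpha)) = T(\phi(f_\alpha))$, yields after simplification
\[
(p_\phi - sq_\phi)\, f_\alpha = (\text{a vector in a single separable subspace built from } S_T,\ S_\phi,\ \text{and } TS_\phi),
\]
so the same $c_0(\omega_1)$-versus-separable dichotomy forces $p_\phi = sq_\phi$. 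Finally, extending $\phi^{-1}$ by zero on $(I-P)(X)$ produces an operator $\tilde\phi^{-1} \in \mathcal{L}(X)$ with $\tilde\phi^{-1}\phi = I$; descending to the quotient, with $(a, b) = (p, q)(\tilde\phi^{-1})$ and $(c, d) = (p_\phi, q_\phi)$, the identity becomes $ac + bd = 1$ and $ad + bc = 0$, and substituting $c = sd$ collapses both sides to $c(a + sb)$, yielding simultaneously $c(a + sb) = 1$ and $c(sa + b) = 0$, an immediate contradiction. The main obstacle I anticipate is the careful bookkeeping in the $(\tfrac{1}{2}, \pm\tfrac{1}{2})$ case: one must track the various separable-range error terms arising from $S_T(f_\alpha)$, $TS_\phi(f_\alpha)$, and $S_\phi(f_\alpha) - sJS_\phi(f_\alpha)$ through the displayed identity and confirm that together they generate a separable subspace independent of $\alpha$, so that the non-separability of $\overline{\mathrm{span}}\{f_\alpha\}$ delivers the contradiction.
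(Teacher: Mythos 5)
Your proof is correct, but it takes a genuinely different and substantially heavier route than the paper's. The paper's argument never mentions projections or idempotents: given a surjection $T=(T_1,T_2):C_0(L\times L)\to C_0(L\times L)\oplus C_0(L\times L)$, it applies only the first-stage reduction (Theorem \ref{ReducaoOperador}) to each coordinate, writing $T_i=p_iI+q_iJ+S_i$ with $S_i$ ranging in $C_0(L^2\setminus(L\setminus L_\rho)^2)$, then uses surjectivity to pick $f$ with $T(f)=(\chi_{(x_0,x_0)},\chi_{(y_0,y_0)})$ for isolated $x_0\neq y_0$ in $L\setminus L_\rho$; evaluating at the two diagonal points yields the inconsistent system $(p_1+q_1)f(x_0,x_0)=1$, $(p_2+q_2)f(x_0,x_0)=0$, $(p_1+q_1)f(y_0,y_0)=0$, $(p_2+q_2)f(y_0,y_0)=1$. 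Notably, only surjectivity is used --- injectivity plays no role --- and neither the uniqueness clause of Theorem \ref{main1} nor the $M_g$, $N_h$ and matrix components are needed. Your route instead builds the Calkin-type calculus, and it does go through: your ideal verification is sound (every $U\in\mathcal{J}$ has range in some $C_0((L\times L_\rho)\cup(L_\rho\times L))$, conversely any operator with such range lies in $\mathcal{J}$ by testing on off-diagonal indicators, and compositions with an arbitrary $T$ reduce via Theorem \ref{main1} to $J\mathcal{J}$, $\mathcal{J}J$ and $\mathcal{J}\mathcal{J}$, the last handled because the range of $U_1U_2$ sits inside the range of $U_1$); the idempotent classification mod $\mathcal{J}$ is right; the $(0,0)$ case correctly collides an uncountable $1$-separated family of indicators with a separable range; the identity $\tfrac12(I-sJ)\phi(f_\alpha)=T\phi(f_\alpha)$ does force $p_\phi=sq_\phi$ since all error terms ($S_T(f_\alpha)$, $TS_\phi(f_\alpha)$, $S_\phi(f_\alpha)$, $JS_\phi(f_\alpha)$) live in one fixed separable subspace independent of $\alpha$; and the final computation in $\mathbb{R}[J]/(J^2-I)$, where $c(a+sb)=1$ and $c(sa+b)=0$ clash upon multiplying the second equation by $s$, is valid. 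What your approach buys is more than the theorem: it identifies the idempotents of $\mathcal{L}(C_0(L\times L))/\mathcal{J}$ and shows $C_0(L\times L)$ is not isomorphic to the range of any projection whose class mod $\mathcal{J}$ differs from $[I]$; your classes $(\tfrac12,\pm\tfrac12)$ are exactly the symmetric and antisymmetric projections $P_1,P_2$ of Section \ref{Sec-Geo}, so your case analysis recovers structural information adjacent to Proposition \ref{Prop:NotIsomorphic}. The cost is the full strength of Theorem \ref{main1} (including uniqueness) and the separable-range bookkeeping you yourself flag, where the paper's proof is a few lines downstream of Theorem \ref{ReducaoOperador} alone.
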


\begin{proof}
Towards a contradiction, assume that there are operators $T_1, T_2: C_0(L \times L) \to C_0(L \times L)$ such that the formula $T(f) = (T_1(f), T_2(f))$ defines a surjective map of $C_0(L \times L)$ onto $C_0(L \times L) \oplus C_0(L \times L)$.

From Theorem \ref{ReducaoOperador}, there exist scalars $p_1, q_1, p_2, q_2$ and $\rho < \omega_1$ such that, if $S_1 = T_1 - p_1  I - q_1  J$ and $S_2 = T_2 -p_2  I - q_2 J$, we have 
\[S_1[C_0(L\times L)]\cup S_2[C_0(L\times L)]\subset C_0(L^2\setminus (L\setminus L_\rho)^2).\]

Let $x_0, y_0$ be distinct isolated points of $L \setminus L_\rho$, and let $f \in C_0(L \times L)$ be such that $T(f) = (\chi_{(x_0,x_0)}, \chi_{(y_0,y_0)})$. It follows that
\begin{align*}
    p_1 f(x,y) + q_1 f(y,x) + S_1(f)(x,y) &= \chi_{(x_0,x_0)}(x,y), \\
    p_2 f(x,y) + q_2 f(y,x) + S_2(f)(x,y) &= \chi_{(y_0,y_0)}(x,y).
\end{align*}

Since $S_i(f)(x_0,x_0)=S_i(f)(y_0,y_0)=0$, $i=1,2$, we have:
\[
\begin{array}{cc}
    \begin{array}{rcl}
        (p_1 + q_1) f(x_0,x_0) &=& 1, \\
        (p_2 + q_2) f(x_0,x_0) &=& 0,
    \end{array}
    &
    \quad
    \begin{array}{rcl}
        (p_1 + q_1) f(y_0,y_0) &=& 0, \\
        (p_2 + q_2) f(y_0,y_0) &=& 1.
    \end{array}
\end{array}
\]

This leads to a contradiction.
\end{proof}

As we have seen, for an arbitrary locally compact space $L$, the most elementary operators on $C_0(L^n)$ are inevitably those induced by the permutation of coordinates. These operators will ultimately induce projections of the form $P = a_1 I_{\sigma_1} + \ldots + a_r I_{\sigma_r}$, which we refer to as \emph{natural projections}. In general, one can find all the natural projections induced by $\mathfrak{S}_n$ by finding all solutions to the equation:
\[
\left( \sum_{\sigma \in \mathfrak{S}_n} x_\sigma I_\sigma \right)^2 = \sum_{\sigma \in \mathfrak{S}_n} x_\sigma I_\sigma.
\]
In our particular case, for $C_0(L \times L)$, solving the above equation for the case $n=2$ and returning to the notation of the previous section, where $I$ represents the identity in $C_0(L \times L)$ and $J$ the function that transposes the coordinates in the function, besides the trivial projections, which are the null operator and the identity operator, we obtain:
\[ 
P_1 = \frac{1}{2}(I + J) \quad \text{and} \quad P_2 = \frac{1}{2}(I - J).
\]
In other words, $P_1$ is the projection onto the subspace of $C_0(L \times L)$ consisting of all symmetric functions, i.e., functions $f \in C_0(L \times L)$ that satisfy $f(x, y) = f(y, x)$, and $P_2$ projects onto the space of anti-symmetric functions that satisfy $f(x, y) = -f(y, x)$.

It is straightforward to verify that there exist locally compact Hausdorff spaces $\varOmega_1$ and $\varOmega_2$ such that $P_1[C_0(L \times L)] \cong C_0(\varOmega_1)$ and $P_2[C_0(L \times L)] \cong C_0(\varOmega_2)$. As we will see next, in our specific case, these spaces are not isomorphic, as demonstrated by the following result.

\begin{proposition}\label{Prop:NotIsomorphic}
There is no injective operator from $C_0(\varOmega_1)$ into $C_0(\varOmega_2)$.
\end{proposition}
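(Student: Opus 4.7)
Towards a contradiction, I will assume $U:C_0(\varOmega_1)\to C_0(\varOmega_2)$ is an injective operator. Using the isometries $C_0(\varOmega_1)\cong P_1[C_0(L\times L)]$ and $C_0(\varOmega_2)\cong P_2[C_0(L\times L)]$, the map $U$ induces an operator $T:C_0(L\times L)\to C_0(L\times L)$ which (i) annihilates the antisymmetric subspace, i.e., $T(I-J)=0$; (ii) has antisymmetric range, i.e., $(I+J)T=0$; and (iii) is injective on the symmetric subspace. My plan is to apply Theorem \ref{main1} to write $T=pI+qJ+M_g+N_h+R+S$, and then to exhibit a nonzero symmetric $f$ with $T(f)=0$, contradicting (iii).

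The witness will be $f=\chi_{\{(x_0,y_0)\}}+\chi_{\{(y_0,x_0)\}}$ for carefully chosen distinct isolated points $x_0,y_0\in L$. First I fix $\xi<\omega_1$ large enough that $g,h\in C_0(L_\xi)$, that $R$ vanishes on $C_0((L\setminus L_\xi)^2)$ (by Proposition \ref{Prop:BlockZero}), and that $S[C_0(L\times L)]\subseteq C_0(L_\xi\times L_\xi)$. Since $L_\xi\times L_\xi$ is countable and each Radon measure $S^*(\delta_{(z,w)})$ has countable support (as used repeatedly in the paper), the set
\[A=\bigcup_{(z,w)\in L_\xi\times L_\xi}\mathrm{supp}\bigl(S^*(\delta_{(z,w)})\bigr)\]
is a countable subset of $L\times L$. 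Because the construction in Theorem \ref{compactclub} places an isolated point of $L$ at every successor ordinal, $L\setminus L_\xi$ contains uncountably many isolated points of $L$, and I can pick $x_0\neq y_0$ among them with $(x_0,y_0),(y_0,x_0)\notin A$.

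Granted this $f$, all the complicated pieces of $T$ vanish on $f$: since $x_0\neq y_0$, $f$ vanishes on the diagonal, so $M_g(f)=N_h(f)=0$; since $f$ is supported in $(L\setminus L_\xi)^2$, Proposition \ref{Prop:BlockZero} gives $R(f)=0$; and $S(f)\equiv 0$ because outside $L_\xi\times L_\xi$ the range hypothesis forces $S(f)=0$, while inside $L_\xi\times L_\xi$ the atoms of $S^*(\delta_{(z,w)})$ miss both $(x_0,y_0)$ and $(y_0,x_0)$ by construction. Since $f$ is symmetric, $Jf=f$, so $T(f)=(p+q)f$. Property (ii) then forces $T(f)$ to be antisymmetric, but $(p+q)f$ is symmetric; hence $(p+q)f=0$, and as $f\neq 0$ I conclude $p+q=0$ and so $T(f)=0$, contradicting (iii).

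The main obstacle is the construction of $f$: I need distinct isolated points of $L$ lying outside an arbitrarily large $L_\xi$ and simultaneously outside a countable subset of $L\times L$ that encodes the atomic data of the separable-range operator $S$. This is exactly the combination supplied by the exotic construction of $L$ together with Proposition \ref{Prop:BlockZero}. Once $f$ is in hand, the symmetric/antisymmetric dichotomy finishes the argument immediately, with no need to unpack the individual identities of the matrix operator $R$ or any further structure of $S$.
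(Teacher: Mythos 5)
Your proof is correct, and its engine is the same as the paper's: the witness $f=\chi_{\{(x_0,y_0)\}}+\chi_{\{(y_0,x_0)\}}$ for distinct isolated points $x_0,y_0$ far out in $L$, on which the diagonal multiplications vanish (as $f$ is zero on the diagonal), so that the induced operator sends the symmetric $f$ to $(p+q)f$, and antisymmetry of the range forces this to be zero. The difference lies in which decomposition you feed into this argument. You invoke the full classification (Theorem \ref{main1}), whose remainder $S$ is merely of separable range; this costs you an extra genericity step: since separable range only yields $S[C_0(L\times L)]\subseteq C_0(L_\xi\times L_\xi)$, you must in addition choose $(x_0,y_0)$ so that neither it nor $(y_0,x_0)$ is an atom of any $S^*(\delta_{(z,w)})$ with $(z,w)\in L_\xi\times L_\xi$ --- a legitimate move, since these are countably many atomic measures with countable supports (Remark \ref{Rem:SeAnulaNoInfinito}) and $L\setminus L_\xi$ contains uncountably many isolated points, and you also correctly use monotonicity of Proposition \ref{Prop:BlockZero} to kill the matrix part $R$. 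The paper instead uses only the intermediate decomposition (Theorem \ref{ReducaoOperadorL2}), whose remainder satisfies $S[C_0((L\setminus L_\rho)^2)]=\{\vec{0}\}$ outright, so $S(f)=0$ for \emph{any} such witness with no atom-avoidance; it then extracts $p+q=0$ from the antisymmetry identity $\frac{1}{2}(Q-J\circ Q)=Q$ evaluated on $(L\setminus L_\rho)^2$, whereas you conclude directly from the symmetric/antisymmetric dichotomy applied to $T(f)=(p+q)f$, which is marginally cleaner at the finish. Both routes are sound; yours is heavier in machinery (Theorem \ref{main1} plus Proposition \ref{Prop:BlockZero} plus the atom-avoidance selection) but self-contained, and incidentally sidesteps a small slip in the paper, whose proof cites Theorem \ref{ReducaoOperador} while actually using the properties stated in Theorem \ref{ReducaoOperadorL2}.
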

\begin{proof}
If $T:C_0(\varOmega_1)\to C_0(\varOmega_2)$ denotes a bounded linear mapping, let $Q:C_0(L\times L)\to C_0(L\times L)$ be defined by  $Q=T\circ P_1$, where $P_1=\frac{1}{2}(I+J)$. According to Theorem \ref{ReducaoOperador}, there are $\rho<\omega_1$,  scalars $p$ and $q$ and functions $g,h \in C_0(L_\rho)$ such that, if $S=Q-p I-q J-M_g-N_h$, then  
\[S[C_0(L^2)]\subset C_0(L^2\setminus (L\setminus L_\rho)^2) \quad \text{ and } \quad S[C_0((L\setminus L_\rho)^2)]=\{\vec{0}\}.\]

Since $C_0(\varOmega_2)$ is isometrically identified with the space of anti-symmetric function of $C_0(L\times L)$, for every function
$f\in C_0(L\times L)$ we have
\[\frac{1}{2}(Q-J\circ Q)(f)(x,y)=Q(f)(x,y)\]
for all $(x,y)$. In particular, if $(x,y)\in (L\setminus L_\rho)^2$, then $S(f)(x,y)=0$ and we obtain
\[\frac{1}{2}(p-q) f(x,y)+\frac{1}{2}(q-p) f(y,x)=p f(x,y)+q f(y,x).\]

Let $x_0$ and $y_0$ be distinct isolated point of $L\setminus L_\rho$ and let $f=\frac{1}{2}(\chi_{(x_0,y_0)}+\chi_{(y_0,x_0)})$. The previous relation applied to $f$ leads to $p+q=0$. Moreover, since $f\in C_0(\varOmega_1)$ and clearly $M_g(f)=N_f(f)=0$ we have
\[T(f)(x,y)=(T\circ P_1)(f)(x,y)=Q(f)(x,y)=p f(x,y)+qf(y,x)=0\]
for all $(x,y)$. Therefore, $T$ is not injective.
\end{proof}

With minor adjustments to the previous proposition, it is also possible to demonstrate that there is no injective bounded linear mapping from $C_0(\varOmega_2)$ to $C_0(\varOmega_1)$. We deduce that $C_0(\varOmega_1)$ is not isomorphic to $C_0(\varOmega_2)$, and neither of these spaces is isomorphic to $C_0(L \times L)$. 

We now observe that the locally compact space $L$, constructed in Section \ref{Sec-Ostaszewski}, satisfies the property of being a $3$-collapsing space of height $\omega$, as defined in \cite[Definition 4.1]{candido2}. Consequently, all the results from \cite{candido2} apply to our space $L$, leading to the following isomorphisms:

\begin{proposition}\label{Prop:Simplification}
For every $\rho < \omega_1$, if $L_\rho$ is infinite, then either $C_0(L_\rho) \sim C_0(\omega)$ or $C_0(L_\rho) \sim C_0(\omega^\omega)$. Hence, if $L_\rho$ is infinite, both $C_0(L_\rho \times L)$ and $C_0(L^2 \setminus (L \setminus L_\rho)^2)$ are linearly isomorphic to either $C_0(\omega \times L)$ or $C_0(\omega^\omega \times L)$.
\end{proposition}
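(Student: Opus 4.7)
The plan is to reduce the first assertion to classical classification theorems for $C(K)$ of countable compact scattered metric spaces, and then to obtain the second assertion via functoriality of the injective tensor product together with a Pelczynski-style absorption.

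For the first assertion, recall from Section \ref{Sec-Prelim} that $L_\rho = \varphi^{-1}[\rho]$ is a clopen subspace of $L$ that is countable (as a countable union of finite fibers) and first-countable (inherited from the first-countability of $L$). These two properties together imply second-countability, so the Alexandrov compactification $\widehat{L_\rho} = L_\rho \cup \{\infty\}$ is a countable compact metric scattered space. Since $L_\rho$ is clopen, $L_\rho^{(n)} = L_\rho \cap L^{(n)}$ for every $n<\omega$, and hence $L_\rho^{(\omega)} \subseteq L^{(\omega)} = \emptyset$ by the height computation in Theorem \ref{compactclub}. The Mazurkiewicz--Sierpi\'nski theorem then identifies $\widehat{L_\rho}$ with a compact countable ordinal of Cantor--Bendixson rank at most $\omega+1$, and the Bessaga--Pe{\l}czy\'nski classification of $C(K)$ for countable compact metric scattered $K$ yields $C(\widehat{L_\rho}) \sim c_0$ when the rank is finite and $C(\widehat{L_\rho}) \sim C([0, \omega^\omega])$ when it equals $\omega+1$. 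Since $C_0(L_\rho)$ is a closed hyperplane of $C(\widehat{L_\rho})$, and both $c_0$ and $C([0,\omega^\omega])$ are isomorphic to their own hyperplanes, this gives $C_0(L_\rho) \sim C_0(\omega)$ or $C_0(L_\rho) \sim C_0(\omega^\omega)$.

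For the second assertion, use the isometric identification $C_0(X \times Y) \cong C_0(X) \hat{\otimes}_\varepsilon C_0(Y)$ for locally compact $X, Y$ noted in Section \ref{Sec-Operators}. Functoriality of $\hat{\otimes}_\varepsilon$ and the preceding isomorphism then give $C_0(L_\rho \times L) \sim C_0(\omega \times L)$ or $C_0(\omega^\omega \times L)$, respectively. For $C_0(L^2 \setminus (L \setminus L_\rho)^2)$, I would decompose
\[
L^2 \setminus (L \setminus L_\rho)^2 = (L_\rho \times L) \sqcup ((L \setminus L_\rho) \times L_\rho),
\]
which gives
\[
C_0(L^2 \setminus (L \setminus L_\rho)^2) \cong C_0(L_\rho \times L) \oplus C_0((L \setminus L_\rho) \times L_\rho).
\]
Swapping coordinates identifies the second summand with $C_0(L_\rho) \hat{\otimes}_\varepsilon C_0(L \setminus L_\rho)$, which is a direct summand of $C_0(L_\rho \times L) \cong C_0(L_\rho) \hat{\otimes}_\varepsilon [C_0(L_\rho) \oplus C_0(L \setminus L_\rho)]$ via the natural splitting $C_0(L) = C_0(L_\rho) \oplus C_0(L \setminus L_\rho)$. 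Since $C_0(L_\rho)$, being either $c_0$ or $C_0(\omega^\omega)$, is isomorphic to its own square, the space $C_0(L_\rho) \hat{\otimes}_\varepsilon C_0(L \setminus L_\rho)$ is also isomorphic to its square; this is exactly what is needed for a Pelczynski absorption to conclude $C_0(L^2 \setminus (L \setminus L_\rho)^2) \sim C_0(L_\rho \times L)$.

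The main obstacle is purely bookkeeping: correctly tracking the dichotomy between finite and infinite CB-rank of $L_\rho$, verifying the inherited height bound $L_\rho^{(\omega)}=\emptyset$, and justifying the final absorption step via the squaring property of $c_0$ and $C_0(\omega^\omega)$. No substantive new idea beyond the classical Bessaga--Pe{\l}czy\'nski machinery and the tensor-product structure of $C_0$-spaces is required.
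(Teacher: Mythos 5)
Your proof is correct, but it takes a partly different route from the paper's. For the first assertion the paper simply cites \cite[Proposition 5.4]{candido2}; your Mazurkiewicz--Sierpi\'nski plus Bessaga--Pe{\l}czy\'nski argument (countable $+$ first-countable $\Rightarrow$ metrizable one-point compactification, $L_\rho^{(\omega)}=L_\rho\cap L^{(\omega)}=\emptyset$, hence Cantor--Bendixson rank of $\widehat{L_\rho}$ finite or exactly $\omega+1$, then the hyperplane identification) essentially unfolds that citation and is sound. The genuine divergence is in the second assertion: the paper invokes \cite[Proposition 5.5]{candido2}, i.e.\ the nontrivial isomorphism $C_0(L\setminus L_\rho)\sim C_0(L)$, to identify the summand $C_0((L\setminus L_\rho)\times L_\rho)$ with a second copy of $C_0(L_\rho\times L)$, and then absorbs via $C_0((\omega^\omega\oplus\omega^\omega)\times L)\sim C_0(\omega^\omega\times L)$; you avoid that input entirely, writing $X=C_0(L_\rho\times L)\cong Z\oplus Y$ with $Z=C_0(L_\rho\times L_\rho)$, $Y\cong C_0(L_\rho)\hat{\otimes}_{\varepsilon}C_0(L\setminus L_\rho)$, and using $Y\sim Y\oplus Y$ (from $C_0(L_\rho)\sim C_0(L_\rho)\oplus C_0(L_\rho)$ tensored with the identity) to get
\[
X\oplus Y\sim Z\oplus Y\oplus Y\sim Z\oplus Y\sim X,
\]
which is exactly the Pe{\l}czy\'nski absorption you gesture at and is worth writing out as above. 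The trade-off: the paper's proof is shorter given the machinery already established in \cite{candido2}, while yours is more self-contained and more general, since it needs nothing about $L$ beyond $L_\rho$ being countable clopen with $C_0(L_\rho)$ isomorphic to its square, whereas $C_0(L\setminus L_\rho)\sim C_0(L)$ is a structural fact about this particular class of spaces. Both yield the stated conclusion.
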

\begin{proof}
The first statement follows from \cite[Proposition 5.4]{candido2}. From \cite[Proposition 5.5]{candido2}, we have $C_0(L \setminus L_\rho) \sim C_0(L)$. Then, recalling \cite[Theorem 20.5.6]{Se}, assuming that $C_0(L_\rho) \sim C_0(\omega^\omega)$, we have
\begin{align*}
    C_0(L_\rho \times L) &\sim C_0(L_\rho) \hat{\otimes}_{\varepsilon} C_0(L) \sim C_0(\omega^\omega) \hat{\otimes}_{\varepsilon} C_0(L) \sim C_0(\omega^\omega \times L).
\end{align*}
Consequently,
\begin{align*}
    C_0(L^2 \setminus (L \setminus L_\rho)^2) &\sim C_0(L_\rho \times L) \oplus C_0((L \setminus L_\rho) \times L_\rho) \\
    &\sim C_0(L_\rho \times L) \oplus C_0(L_\rho \times L) \\
    &\sim C_0(\omega^\omega \times L) \oplus C_0(\omega^\omega \times L) \\
    &\sim C_0((\omega^\omega \oplus \omega^\omega) \times L) \sim C_0(\omega^\omega \times L).
\end{align*}
The remaining case can be proved similarly.
\end{proof}

We close this research by proving that neither $C_0(L \times L)$ nor $C_0(\varOmega_1)$ or $C_0(\varOmega_2)$ are isomorphic to any subspace of $C_0(\omega^\omega \times L)$. This will be a consequence of the following proposition:

\begin{proposition}\label{Prop-ClassificacaoFinal}
Let $X$ denote either $C_0(L \times L)$ or $C_0(\Omega_i)$, where $i = 1, 2$. Then, there is no injective operator from $X$ into $C_0(L_\alpha \times L)$ for any $\alpha < \omega_1$.
\end{proposition}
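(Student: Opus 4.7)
The plan is to lift the operator $T$ to an operator $\widetilde T:C_0(L\times L)\to C_0(L\times L)$ whose image lies in $C_0(L_\alpha\times L)$, apply the decomposition of Theorem \ref{main1}, and then exhibit a nonzero kernel element inside $X$, contradicting the assumed injectivity of $T$. For $X=C_0(L\times L)$ I take $\widetilde T=\iota\circ T$, where $\iota:C_0(L_\alpha\times L)\hookrightarrow C_0(L\times L)$ is the canonical inclusion. For $X=C_0(\varOmega_i)$ with $i\in\{1,2\}$, I set $\widetilde T=\iota\circ T\circ P_i$; since $P_1P_2=0$ and $P_i$ restricts to the identity on $C_0(\varOmega_i)$, injectivity of $T$ on $C_0(\varOmega_i)$ is equivalent to $\ker\widetilde T\cap C_0(\varOmega_i)=\{\vec 0\}$. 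Writing $\widetilde T=pI+qJ+M_g+N_h+R+S$ via Theorem \ref{main1}, I fix $\rho\geq\alpha$ large enough that $g,h\in C_0(L_\rho)$, Proposition \ref{Prop:BlockZero} applies to $R$ with ordinal $\rho$, and $S[C_0(L\times L)]\subset C_0(L_\rho\times L_\rho)$.

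Next I pin down the scalars $p$ and $q$ using characteristic functions at distinct isolated points $x_0,y_0\in L\setminus L_\rho$. For such points all of $M_g$, $N_h$, $R$ annihilate $\chi_{\{(x_0,y_0)\}}$ (the first two because the diagonal slice vanishes, the third by Proposition \ref{Prop:BlockZero}), and $S\chi_{\{(x_0,y_0)\}}$ vanishes off $L_\rho\times L_\rho$. For $X=C_0(L\times L)$, evaluating $\widetilde T\chi_{\{(x_0,x_0)\}}$ at $(x_0,x_0)$ gives $p+q=0$, and evaluating $\widetilde T\chi_{\{(x_0,y_0)\}}$ at $(x_0,y_0)$ gives $p=0$. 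For $X=C_0(\varOmega_1)$, the antisymmetric function $\chi_{\{(x_0,y_0)\}}-\chi_{\{(y_0,x_0)\}}$ lies in $\ker P_1$ and is therefore killed by $\widetilde T$; evaluating at $(x_0,y_0)$ yields $p=q$, and then $\widetilde T\chi_{\{(x_0,x_0)\}}(x_0,x_0)=p+q=2p=0$ forces $p=q=0$. The case $X=C_0(\varOmega_2)$ is symmetric, using $\chi_{\{(x_0,y_0)\}}+\chi_{\{(y_0,x_0)\}}\in\ker P_2$ to get $p+q=0$ followed by $\widetilde T\chi_{\{(x_0,y_0)\}}(x_0,y_0)=p=0$.

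With $\widetilde T=M_g+N_h+R+S$, I produce a kernel element by a cardinality argument. Since $K$ is scattered, each Radon measure $S^*(\delta_{(a,b)})$ is purely atomic with countable support (cf.\ Remark \ref{Rem:SeAnulaNoInfinito}). Let $E\subset L\times L$ be the union of these supports as $(a,b)$ ranges over the countable set $L_\rho\times L_\rho$; then $E$ is countable. Since $L\setminus L_\rho$ contains uncountably many isolated points of $L$, I can choose distinct isolated $x_0,y_0\in L\setminus L_\rho$ such that both $(x_0,y_0)$ and $(y_0,x_0)$ lie outside $E$. Define $f\in X$ by $f=\chi_{\{(x_0,y_0)\}}$ for $X=C_0(L\times L)$, $f=\chi_{\{(x_0,y_0)\}}+\chi_{\{(y_0,x_0)\}}$ for $X=C_0(\varOmega_1)$, and $f=\chi_{\{(x_0,y_0)\}}-\chi_{\{(y_0,x_0)\}}$ for $X=C_0(\varOmega_2)$. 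Then $f\neq\vec 0$, $M_gf=N_hf=Rf=0$ by the considerations above, and $Sf=0$ because the avoidance of $E$ kills $Sf$ on $L_\rho\times L_\rho$ while the separable-range property kills $Sf$ off $L_\rho\times L_\rho$. Hence $\widetilde Tf=0$, contradicting injectivity of $T$.

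The decisive step is handling the ``separable'' summand $S$: guaranteeing $Sf=0$ is where the scatteredness of $L$ enters essentially, reducing the problem to a clean cardinality argument pitting the uncountable pool of isolated pairs in $(L\setminus L_\rho)^2$ against the countable ``bad'' set $E$ generated by the countable family $\{S^*(\delta_{(a,b)}):(a,b)\in L_\rho\times L_\rho\}$.
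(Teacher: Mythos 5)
Your proof is correct, but it takes a genuinely different route from the paper's. The paper applies the stage-two decomposition (Theorem \ref{ReducaoOperadorL2}) to $Q = T\circ P_1$: the remainder there annihilates \emph{all} of $C_0((L\setminus L_\rho)^2)$, so the single symmetric test function $f=\frac{1}{2}(\chi_{(x_0,y_0)}+\chi_{(y_0,x_0)})$ with $x_0,y_0$ isolated in $L\setminus(L_\alpha\cup L_\rho)$ is killed by the remainder outright; evaluating at $(x_0,y_0)$ forces $p+q=0$ and hence $T(f)=0$, with the other cases left as ``similar.'' You instead invoke the full decomposition of Theorem \ref{main1}, whose remainder $S$ only has separable range --- and separable range alone does \emph{not} annihilate $C_0((L\setminus L_\rho)^2)$ (a rank-one $f\mapsto f(x_0,y_0)\chi_{\{(a,b)\}}$ shows this), so you correctly identified that an extra step is needed: since each $S^*(\delta_{(a,b)})$ is atomic with countable support and $L_\rho\times L_\rho$ is countable, you avoid the countable bad set $E$ when choosing the isolated pair, killing $Sf$ on $L_\rho\times L_\rho$ while $S[C_0(L\times L)]\subset C_0(L_\rho\times L_\rho)$ kills it elsewhere. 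What each approach buys: the paper's is shorter because the stronger annihilation property is already available at stage two; yours is more robust (it works for any operator whose remainder merely has separable range, and it pins down $p=q=0$ rather than just $p+q=0$) and you treat all three cases $C_0(L\times L)$, $C_0(\varOmega_1)$, $C_0(\varOmega_2)$ explicitly. Two small expository points: when you evaluate $\widetilde{T}\chi_{\{(x_0,x_0)\}}$ at $(x_0,x_0)$, the vanishing of the $M_g,N_h$ terms is not because the diagonal slice vanishes (it does not, for this function) but because $g(x_0)=h(x_0)=0$ as $g,h\in C_0(L_\rho)$ and $x_0\notin L_\rho$; and your assertion that $L\setminus L_\rho$ contains uncountably many isolated points deserves a word --- the isolated points of $L$ are dense, and any countable subset of $L$ lies in some clopen $L_\beta\neq L$ since $\varphi$ is finite-to-one, so they are uncountable (in the explicit construction, all $(\alpha,\alpha)$ with $\alpha$ a successor are isolated). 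Neither point affects correctness.
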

\begin{proof}
Let $\alpha < \omega_1$ be arbitrary and consider a bounded linear mapping $T: C_0(\varOmega_1) \to C_0(L_\alpha \times L)$. If $P_1 = \frac{1}{2}(I + J)$, let $Q: C_0(L \times L) \to C_0(L \times L)$ be defined as $Q = T \circ P_1$. According to Theorem \ref{ReducaoOperadorL2}, there exists $\rho < \omega_1$, scalars $p$ and $q$, and functions $g, h \in C_0(L_\rho)$ such that $S = Q - p I - q J - M_g - N_h$ satisfies $S[C_0((L \setminus L_\rho)^2)] = \{\vec{0}\}$.

Let $x_0$ and $y_0$ be distinct isolated points in $L \setminus (L_\alpha \cup L_\rho)$ and fix $f = \frac{1}{2}(\chi_{(x_0, y_0)} + \chi_{(y_0, x_0)})$. It is readily seen that $S(f) = M_g(f) = N_h(f) = 0$. Moreover, since $Q[C_0(L \times L)] \subset C_0(L_\alpha \times L)$, we obtain $Q(f)(x_0, y_0) = \frac{1}{2}(p + q) = 0$. 

Now, since $f \in C_0(\varOmega_1)$, we have for all $(x, y) \in L \times L$,
\begin{align*}
T(f)(x, y) = T \circ P_1(f)(x, y) =Q(f)(x, y) = \frac{p + q}{2} (\chi_{(x_0, y_0)} + \chi_{(y_0, x_0)})(x, y) = 0,
\end{align*}
which shows that $T$ is not injective. With similar arguments, one may check that there is also no bounded injective linear mapping from $C_0(\varOmega_2)$ into $C_0(L_\alpha \times L)$.
\end{proof}

\section{Acknowledgements}

This work was supported by Fundação de Amparo \`a Pesquisa do Estado de S\~ao Paulo (FAPESP), grant no. 2023/12916-1.

\bibliographystyle{amsalpha}

\begin{thebibliography}{A}

\bibitem{Argyros2011}
S. A. Argyros and R. G. Haydon,
\textit{A hereditarily indecomposable $L^\infty$-space},
Acta Mathematica, vol. 206, no. 1, pp. 1--54, 2011.

\bibitem{candido2}
L. Candido,
\textit{On Banach spaces of the form $C_0(\alpha\times L)$ with few operators},
Banach J. Math. Anal., vol. 15, no. 41, pp. 1--23, 2021.

\bibitem{CK}
L. Candido and P. Koszmider,
\textit{On complemented copies of $c_0(\omega_1)$ in $C(K^n)$ spaces},
Studia Math., vol. 233, pp. 209--226, 2016.

\bibitem{DowJunPel}
A. Dow, H. Junnila, and J. Pelant,
\textit{Chain condition and weak topologies},
Topology Appl., vol. 156, pp. 1327--1344, 2009.

\bibitem{DunfordSchwartz} N. Dunford and J. Schwartz,
\textit{Linear Operators; Part I, General Theory}. 
Interscience Publishers, INC., New York, Fourth printing, (1967).


\bibitem{GowersMaurey}
W. T. Gowers and B. Maurey,
\textit{The unconditional basic sequence problem},
J. Amer. Math. Soc., vol. 6, pp. 851--874, 1993.

\bibitem{Koz1}
P. Koszmider and N. J. Laustsen,
\textit{A Banach space induced by an almost disjoint family, admitting only few operators and decompositions},
Advances in Mathematics, vol. 381, p. 107613, 2021.

\bibitem{Koz2}
P. Koszmider,
\textit{A survey on Banach spaces $C(K)$ with few operators},
Rev. R. Acad. Cienc. Exactas F\'is. Nat. Ser. A Math. RACSAM, vol. 104, pp. 309--326, 2010.

\bibitem{Koz3}
P. Koszmider,
\textit{Banach spaces of continuous functions with few operators},
Math. Ann., vol. 330, pp. 151--183, 2004.

\bibitem{Koz4}
P. Koszmider and P. Zieli\'nski,
\textit{Complementations and decompositions in weakly Lindel\"of Banach spaces},
J. Math. Anal. Appl., vol. 376, pp. 329--341, 2011.

\bibitem{Koz5}
P. Koszmider,
\textit{On decompositions of Banach spaces of continuous functions on Mr\'owka's spaces},
Proc. Amer. Math. Soc., vol. 133, pp. 2137--2146, 2005.


\bibitem{Lindenstrauss}
J. Lindenstrauss,
\textit{Some open problems in Banach space theory},
S\'eminaire Choquet. Initiation \`a l'analyse, vol. 15, pp. 1--9, 1975.

\bibitem{Mor}
B. Maurey,
\textit{Banach spaces with few operators},
in \textit{Handbook of the Geometry of Banach Spaces, vol. 2}, W. B. Johnson and J. Lindenstrauss, Eds., North-Holland, pp. 1247--1297, 2003.

\bibitem{Osta}
K. Ostaszewski,
\textit{On countably compact, perfectly normal spaces},
J. Lond. Math. Soc., vol. 14, pp. 505--516, 1976.

\bibitem{Pleb}
G. Plebanek,
\textit{A construction of a Banach space $C(K)$ with few operators},
Topology Appl., vol. 143, pp. 217--239, 2004.


\bibitem{Se}
Z. Semadeni,
\textit{Banach Spaces of Continuous Functions Vol. I}, Monografie Matematyczne, Tom 55, PWN-Polish Scientific Publishers, Warsaw, 1971.


\bibitem{Shelah}
S. Shelah,
\textit{A Banach space with few operators},
Isr. J. Math., vol. 30, no. 1, pp. 181--191, 1978.

\end{thebibliography}

\end{document}